\crefname{equation}{}{}
\DeclareMathOperator*{\argmin}{arg\,min}
\newcommand{\lrang}[2]{\langle #1, #2 \rangle}
\newcommand{\R}{\mathbb{R}}
\newcommand{\vx}{\mathbf{x}}
\newcommand{\rmd}{\mathrm d}
\newtheorem{assumption}[theorem]{Assumption}
\numberwithin{equation}{section}
\numberwithin{figure}{section}
\numberwithin{table}{section}
\begin{document}

\markboth{W. Liu, C. Wang, X. Zhao}{Action ground states of defocusing nonlinear Schr\"odinger equations}

%
%

\title{\large On action ground states of defocusing nonlinear Schr\"odinger equations}

\author{\normalsize Wei Liu}

\address{Department of Mathematics, National University of Defense Technology, Changsha 410073, China \& Department of Mathematics, National University of Singapore, Singapore 119076, Singapore\\
wl@nudt.edu.cn}

\author{\normalsize Chushan Wang\footnote{Corresponding author}}

\address{Department of Mathematics, National University of Singapore, Singapore 119076, Singapore\\
E0546091@u.nus.edu}

\author{\normalsize Xiaofei Zhao}

\address{School of Mathematics and Statistics \& Computational Sciences Hubei Key Laboratory, Wuhan University\\ 
Wuhan 430072, China\\
matzhxf@whu.edu.cn}

\maketitle


\begin{abstract}
We investigate the action ground states of the defocusing nonlinear Schr\"odinger equation with and without rotation. Our primary focus is on characterizing the relationship between the action ground states and the energy ground states. Theoretically, we prove a complete equivalence of the two in the non-rotating case and a conditional equivalence in the rotating case. Our theoretical results are supported by extensive numerical experiments. Notably, in the rotating case, we provide numerical examples of non-equivalence showing that non-equivalence typically occurs at the transition points where the number of vortices in the action ground state is increasing. Additionally, we study the asymptotic behaviour of the action ground states and the associated physical quantities in certain limiting parameter regimes, with numerical results validating and complementing our analysis. Furthermore, we explore the formation and change of the vortex pattern in the action ground states numerically.
\end{abstract}

\keywords{defocusing nonlinear Schr\"odinger equation; action ground state; rotation; energy ground state;
asymptotics; quantized vortices.}

\ccode{AMS Subject Classification: 35B38, 35Q55, 58E30, 81-08}

\section{Introduction}
The rotating nonlinear Schr\"odinger equation (RNLS) is widely adopted in various physical applications, such as quantum physics, nonlinear optics, and Bose-Einstein condensation (BEC). As a general model, we consider the following RNLS as
\begin{equation}\label{model}
i\partial_t\psi = -\frac{1}{2}\Delta \psi + V(\mathbf{x}) \psi + \beta|\psi|^{p-1}\psi - \Omega L_z \psi, \quad t>0, \quad \mathbf{x} \in \mathbb{R}^d,
\end{equation}
where $1<p<\frac{d+2}{d-2}$ ($1<p<\infty$ when $d=1, 2$), $\Omega,\beta\in\mathbb{R}$ are given parameters and $\psi : = \psi(\mathbf{x}, t):\mathbb{R}^d \times \R \to \mathbb{C}$ is the wave function with
$\mathbf{x}=(x_1,\ldots,x_d)^\top$ and $d\in\mathbb{N}_+$. Here, $V(\mathbf{x})$ is a given real-valued potential function and $L_z$ is the so-called angular momentum operator defined as
\begin{equation*}
L_z=\left\{
\begin{aligned}
&i(x_2\partial_{x_1}-x_1\partial_{x_2}), && d\geq2,\\
&0, && d=1.
\end{aligned}
\right.
\end{equation*}
The parameter $\Omega$ is interpreted as the rotating speed. When $\Omega=0$, \cref{model} reduces to the classical NLS with an external potential which is a fundamental model in quantum physics and nonlinear optics. When $\Omega \neq 0$, $p=3$ and $d=2,3$, \cref{model} serves as the mean-field model for the dynamics of BEC in a rotating frame\cite{BaoCai,Baoadd0,Baoadd1,Fetter}. When $d \geq 3$, the restriction $p<\frac{d+2}{d-2}$ (i.e., energy-subcritical nonlinearity) is imposed to ensure the well-posedness of the equation\cite{Antonelli,Lions1}. The parameter $\beta$ characterizes the type and strength of the nonlinear interaction, with $\beta > 0$ corresponding to a defocusing interaction and $\beta < 0$ to a focusing interaction. Both defocusing and focusing cases are widely considered in applications. Under such setup, the mass $m=\|\psi(\cdot,t)\|_{L^2}^2$ and the energy
\begin{equation}\label{energy fun}
E(\psi(\cdot,t)):= \frac{1}{2}\|\nabla \psi\|_{L^2}^2 + \int_{\mathbb{R}^d} V |\psi|^2 \rmd\vx + \frac{2\beta}{p+1}\|\psi\|_{L^{p+1}}^{p+1} + L_\Omega(\psi)
\end{equation}
are preserved by \eqref{model} in time $t$, where
\[ L_\Omega(\psi):=-\Omega\int_{\mathbb{R}^d} \text{Re}(\overline{\psi}(\mathbf{x}, t)L_z\psi(\mathbf{x}, t))\rmd\mathbf{x} = -\Omega\int_{\mathbb{R}^d} \overline{\psi}(\mathbf{x}, t)L_z\psi(\mathbf{x}, t) \rmd\mathbf{x} \]
by integration by parts. We keep Re in the definition since it may be necessary for numerical approximations.

As a special class of solutions for dynamics, standing wave/stationary solutions that maintain their shape are of special interest in various important applications.  Examples include the solitary impulse in optical communications and the ground or excited states in cold atoms.
With a prescribed frequency $\omega\in\mathbb{R}$ ($-\omega$ referred to as chemical potential in quantum physics), a standing wave solution of \cref{model} takes the form
$\psi(\mathbf{x},t)=\mathrm{e}^{i\omega t}\phi(\mathbf{x})$, where $\phi$ solves the stationary RNLS
\begin{equation}\label{model0}
H(\phi):=-\frac{1}{2}\Delta\phi+V\phi+\beta |\phi|^{p-1}\phi-
\Omega L_z\phi+\omega\phi=0.
\end{equation}
The fact that \cref{model0} can admit infinitely many solutions\cite{Lions2,Strauss} motivates different mathematical ways to define the physically relevant ones among all the nontrivial solutions. In this work, we are concerned with the so-called \emph{action ground state} which is defined as non-trivial solutions that minimize the \emph{action functional}
\begin{align}\label{action}
S_{\Omega,\omega}(\phi)
&:= E(\phi) + \omega \|\phi\|_{L^2}^2 \notag \\
&= \frac{1}{2}\|\nabla \phi\|_{L^2}^2
 +\int_{\mathbb{R}^d}V|\phi|^2\rmd\mathbf{x}
 +\frac{2\beta}{p+1}
 \|\phi\|_{L^{p+1}}^{p+1}+L_\Omega(\phi)+\omega\|\phi\|_{L^2}^2.
\end{align}
That is to say, we are interested in the solution
\begin{equation}\label{phi_g-def}
	\phi_g = \phi_{\Omega, \omega} \in \mathcal{A}_{\Omega, \omega} := \argmin\{S_{\Omega,\omega}(\phi) :  H(\phi)=0, \  \phi \neq 0\}.
\end{equation}
As we shall show later, the restriction $\phi \neq 0$ in \cref{phi_g-def}, which is needed in the focusing case, can be dropped in the defocusing case. The quantity (\ref{action}) is indeed representing the physical action (the space-time integration of Lagrangian function) of the system for the stationary solution. Such definition of ground states, as far as we know, can date back to the early work of Berestycki and Lions\cite{Lions1}, and it has been further developed in subsequent mathematical research, e.g., Refs.~\refcite{Hajaiej,Fukuizumi,Fukuizumi2,Ohta,Shatah,Wang}. While most studies in the literature concerns the action ground states in the case of focusing nonlinearity, the defocusing case has received comparatively little attention. In our recent work Ref.~\refcite{LYZ}, we address the defocusing case of \cref{model0}, where an equivalent unconstrained variational formulation of the action ground state is identified with efficient numerical algorithms proposed to compute the action ground state solutions. However, many mathematical aspects of the defocusing action ground states remain unexplored, which is one of the goals of our work.

In fact, a more common definition of the ground state in the literature is introduced as the minimization of the energy functional $E$ under a prescribed mass $m>0$\cite{Sparber,BaoCai,Baoadd1,Seiringer2002CMP}:
\begin{align}\label{energyGS-def}
\phi_m^E\in \mathcal{B}_{\Omega, m} := \argmin\{ E(\phi) : \|\phi\|_{L^2}^2=m, \  E(\phi)<\infty \},
\end{align}
which is referred to as the {\em energy ground state}. The prescription of mass is preferred among physicists, and so the energy ground states have received extensive investigations in the past two decades for both focusing ($d=1,2$) and defocusing cases ($d=1,2,3$). See Refs.~\refcite{BaoCai,CDLX,LiuCai21,Seiringer2002CMP} and the references therein. Note that $\phi_m^E$ is also a stationary solution satisfying \eqref{model0}, and the corresponding chemical potential, playing the role of Lagrange multiplier, is determined  as
\[ \omega(\phi_m^E)=-\mu(\phi_m^E), \]
where
\begin{align}\label{eq:mu_def}
    \mu(\phi)
    :&=\frac{1}{\|\phi\|_{L^2}^2}\bigg[\frac{1}{2}\|\nabla \phi\|_{L^2}^2 + \int_{\mathbb{R}^d}V|\phi|^2\rmd\vx + \beta \|\phi\|_{L^{p+1}}^{p+1} + L_\Omega(\phi)\bigg] \notag \\
    &=\frac{1}{\|\phi\|_{L^2}^2}\bigg[E(\phi)+\frac{p-1}{p+1}\beta\|\phi\|_{L^{p+1}}^{p+1}\bigg].
\end{align}
A long-standing question pertains to the relationship between the action ground state and the energy ground state. More interestingly, it raises the question that if the following diagram can commute:
\begin{align}\label{diagram}
\begin{array}{rcl}
 \omega ~~~& \xlongrightarrow{~~~\cref{phi_g-def}~~~} & ~\phi_{\Omega, \omega} \\[1.5ex]
 \omega = -\mu(\phi_m^E) \quad \rotatebox[origin=c]{90}{$\displaystyle\dashrightarrow$}~~~ &  & ~~~\rotatebox[origin=c]{-90}{$\displaystyle\dashrightarrow$}\quad m=\|\phi_{\Omega, \omega}\|_{L^2}^2 \\[1.5ex]
 \phi_m^E ~& \xlongleftarrow[~~~\cref{energyGS-def}~~~]{} &~~~ m \\
\end{array},
\end{align}
which is important both mathematically and physically. Recently, some numerical investigations have been presented in Refs.~\refcite{LYZ,Wang}, and some theoretical answers have been provided in Refs.~\refcite{Dovetta,Jeanjean2021}  for the focusing and non-rotating case ($\beta<0,\Omega=0$) of \cref{model}.


In this work, we investigate the action ground states of the defocusing RNLS (i.e., $\beta>0$ afterward). We first focus on characterizing the relationship between the action ground states and the energy ground states. Then we study the asymptotical behaviours of the action ground state solutions and the associated physical quantities including mass and action in some limiting regimes of $\omega$ and $\Omega$. For the convenience of the reader, we summarize our main results in these two directions below.

In terms of the relationship between the action and energy ground states:
\begin{itemize}
	\item In the non-rotating case (i.e., $\Omega = 0$), we prove a complete equivalence between action and energy ground states: any action ground state is also an energy ground state (\cref{thm:actiontoenergy}) and any energy ground state is also an action ground state (\cref{thm:energytoaction}).
	
	\item In the rotating case (i.e., $\Omega \neq 0$), we establish both equivalence and non-equivalence results: any action ground state $\phi_{\Omega, \omega}$ is an energy ground state with mass $m = \| \phi_{\Omega, \omega} \|^2_{L^2}$, and all the energy ground states with this mass $m$ share the same chemical potential and are also action ground states (\cref{thm:actiontoenergy_new}); however, an energy ground state with mass $m$ will not be an action ground state in the situation stated in \cref{thm:Non-equivalence}. An alternative description of the equivalence via the dual problem is given in \cref{thm:dual}.
\end{itemize}
\noindent The proof is mainly based on the unconstrained formulation from Ref. \refcite{LYZ} and the framework of arguments from Ref. \refcite{Dovetta}. Here, the main difference between the rotating case and the non-rotating case lies in whether there is uniqueness of both ground states. For the non-rotating case, due to the uniqueness of non-negative action ground states and energy ground states, we can establish a complete equivalence between the two and the diagram \cref{diagram} in this case forms a closed loop. In comparison, in the rotating case, we no longer have such uniqueness, which poses much greater challenges in determining if an energy ground state is also an action ground state. In this regard, we derive a sufficient and necessary condition to characterize when the non-equivalence would occur. Although such characterization cannot prove or disprove the equivalence, it has guided our numerical experiments, where examples of non-equivalence are discovered. Moreover, the numerical results suggest that the non-equivalence happens exactly at the transition point $\omega$ where the number of vortices in action ground states changes.

In terms of the asymptotical behaviours of action ground states:
	\begin{itemize}
		\item For given $\omega$, as the rotation speed $\Omega \rightarrow 0$, the action ground state with rotation will converge to the ground state in the non-rotating case (\cref{thm:defocusing}).
		
		\item As $\omega \rightarrow \lambda_0(\Omega)$ with $\lambda_0(\Omega)$ being the smallest eigenvalue of the linear Schr\"odinger operator, the normalized action ground state will converge to the ground state of the linear operator (\cref{thm:smallomega_defocusing}) with sharp estimates of the convergence rate (\cref{thm:smallomega_asymptotics}).
		
		\item As $\omega \rightarrow -\infty$, we formally derive a Thomas-Fermi type limit in \cref{subsec:omega infinity}.
\end{itemize}
\noindent All these asymptotical results are validated and complemented by numerical studies, which further explored {the formation and change of the vortex pattern in the action ground states in the rotating case}. In particular, we find the critical rotating speed $\Omega^\text{c}$ for the generation of vortices and examine how $\Omega^\text{c}$ is influenced by $\omega$ as well as investigate how the number of vortices changes with respect to $\Omega$ and $\omega$.

The rest of the paper is structured as follows. Some  preliminaries for analysis will be given in \cref{sec:pre}. The complete equivalence in the non-rotating case will be  established in \cref{sec:3}. Then, the general rotating case will be studied in \cref{sec:rot}, where the conditional equivalence and non-equivalent situations as well as some asymptotic results are theoretically characterized. Finally,  numerical explorations are done in \cref{sec:numer} and some concluding remarks are made in \cref{sec:con}.

\section{Preliminary}\label{sec:pre}
In this section, we present some basic notations, setups and facts and review some useful results that will be called for investigation later.

\subsection{Setup and assumption}
We assume $V(\mathbf{x})\geq0$ and introduce the functional spaces
\[ L_V^2(\mathbb{R}^d) := \left\{ \phi\ :\ \int_{\mathbb{R}^d} V(\mathbf{x})|\phi(\mathbf{x})|^2 \rmd\mathbf{x}<\infty \right\}, \quad
X:=H^1(\mathbb{R}^d)\cap L_V^2(\mathbb{R}^d).  \]
Clearly, $X$ equipped with the following inner product is a Hilbert space:
\[ (u,v)_X:= \int_{\mathbb{R}^d} \Big(\nabla u(\mathbf{x})\cdot\nabla\overline{v(\mathbf{x})} + \big(1+V(\mathbf{x})\big)u(\mathbf{x})\overline{v(\mathbf{x})} \Big) \rmd\mathbf{x}, \quad\forall\ u,v\in X. \]
We further assume that $V$ satisfies the confining condition
\begin{equation*}
	\lim_{|\vx| \rightarrow \infty}V(\vx) = \infty,
\end{equation*}
which yields the following compact embeddings.
\begin{lemma}[Compact embedding\cite{BaoCai}]\label{lem:Xembed}
Assume that $V(\mathbf{x})\geq0 \; (\forall\ \mathbf{x}\in\mathbb{R}^d)$ satisfies the confining condition. Then the embedding $X\hookrightarrow L^q(\mathbb{R}^d)$ is compact, where $q\in[2,\infty]$ for $d=1$, $q\in[2,\infty)$ for $d=2$, and $q\in[2,2d/(d-2))$ for $d\geq3$.
\end{lemma}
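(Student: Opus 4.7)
The plan is to show sequential compactness of the embedding: from an arbitrary bounded sequence $\{u_n\} \subset X$, I would extract a subsequence converging in $L^q(\R^d)$ for every admissible $q$. First I would establish the case $q=2$ by combining local Rellich--Kondrachov compactness with a tail estimate enabled by the confining potential; then I would upgrade to larger $q$ via Sobolev embedding and Lebesgue interpolation.

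For $q = 2$: since $X$ is a Hilbert space and $\{u_n\}$ is bounded, after passing to a subsequence we may assume $u_n \rightharpoonup u$ weakly in $X$. On each ball $B_R = \{\vx \in \R^d : |\vx| < R\}$ the restrictions are bounded in $H^1(B_R)$, so the classical Rellich--Kondrachov theorem yields $u_n \to u$ strongly in $L^2(B_R)$. To control the exterior, set $V_R := \inf_{|\vx| \geq R} V(\vx)$, which tends to $\infty$ by the confining hypothesis. Then
\[
\int_{|\vx| \geq R} |u_n - u|^2 \, \rmd\vx \;\leq\; \frac{1}{V_R} \int_{\R^d} V(\vx)\,|u_n - u|^2 \, \rmd\vx \;\leq\; \frac{C}{V_R},
\]
uniformly in $n$, where $C$ bounds $\|u_n - u\|_X^2$. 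An $\varepsilon/2$ argument (choose $R$ large, then $n$ large) concludes $u_n \to u$ in $L^2(\R^d)$.

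For $q > 2$: by the Sobolev embedding $H^1(\R^d) \hookrightarrow L^r(\R^d)$ for every $r$ in the stated admissible range, $\{u_n - u\}$ is uniformly bounded in some $L^r$ with $r$ strictly above $q$ (or equal to $q$ in the subcritical endpoint). The Lebesgue interpolation $\|v\|_{L^q} \leq \|v\|_{L^2}^{\theta} \|v\|_{L^r}^{1-\theta}$ with $\theta \in (0,1]$ then upgrades the strong $L^2$-convergence above to strong $L^q$-convergence. In the borderline case $d=1$, $q=\infty$, one instead applies the pointwise bound $|v(x)|^2 = -2\int_x^{\infty} v\,v'\,\rmd t \leq 2\|v\|_{L^2(\R)}\|v'\|_{L^2(\R)}$ to $v = u_n - u$, which again extracts $L^\infty$-convergence from $L^2$-convergence plus the $H^1$-bound.

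The only step with any content is the tail estimate: absent a confining potential, a bounded sequence in $H^1(\R^d)$ can lose mass at infinity (consider translations of a fixed bump), and compactness fails. The hypothesis $V(\vx)\to\infty$ as $|\vx|\to\infty$ is precisely the mechanism that rules out this defect of compactness and converts the local Rellich statement into a global one; the rest of the argument is routine Sobolev/interpolation machinery.
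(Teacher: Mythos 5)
The paper does not prove this lemma at all: it is quoted as a known result from the cited reference \cite{BaoCai}, so there is no in-paper argument to compare against. Your proof is correct and is essentially the standard argument (and the one used in that reference): weak compactness in $X$, local Rellich--Kondrachov on balls, the tail bound $\int_{|\vx|\ge R}|u_n-u|^2\,\rmd\vx\le \big(\inf_{|\vx|\ge R}V\big)^{-1}\|u_n-u\|_X^2$ furnished by the confining condition, and then Gagliardo--Nirenberg/H\"older interpolation against the uniform Sobolev bound to upgrade $L^2$-convergence to $L^q$-convergence, with the Agmon-type pointwise estimate handling $q=\infty$ when $d=1$. The only cosmetic blemish is the parenthetical ``or equal to $q$ in the subcritical endpoint'': interpolation with $r=q$ forces $\theta=0$ and yields nothing, but this case never actually arises since every admissible $q>2$ is strictly below an exponent $r$ for which $H^1\hookrightarrow L^r$ holds, so the argument as a whole is sound.
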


When $d \geq 2$, by integration by parts, the angular momentum $L_\Omega(\phi) = -\Omega \int_{\R^d} \overline{\phi(\vx)} L_z \phi(\vx) \rmd \vx$ is real-valued for any $\phi \in X$, and $L_\Omega(\phi) = 0$ if $\phi$ is real-valued. Moreover, the Young's inequality directly implies, for any $\delta>0$,
\begin{equation}\label{eq:rotenergy}
\left| \Omega\int_{\mathbb{R}^d} \overline{\phi}L_z\phi\, \rmd\mathbf{x}\right|
\leq \int_{\mathbb{R}^d} \left(\frac{\delta}{2}|\nabla\phi|^2 +\frac{|\Omega|^2}{2\delta}(x_1^2+x_2^2)|\phi|^2 \right) \rmd\mathbf{x},
\end{equation}
which motivates the following definition of the maximum rotating speed as
\begin{equation}\label{Omega max}
	\Omega_{\max}:=\sup\left\{\gamma \geq 0:\lim_{|\mathbf{x}|\to\infty}\left(V(\mathbf{x})-\frac{\gamma^2}{2}(x_1^2+x_2^2)\right)=\infty\right\}.
\end{equation}

Now we summarize all the assumptions for our problem below which will be assumed throughout the rest of this paper.
\begin{assumption}\label{lem:S-welldef}
Let $1<p<\frac{d+2}{d-2}$ for $d\geq3$ and $1<p<\infty$ for $d=1,2$. Assume that $V(\mathbf{x})\geq0$ satisfies the confining condition. For the rotating speed $\Omega$, we assume either $\Omega=0$ (non-rotating case) or $0<|\Omega|<\Omega_{\max}$ with an additional assumption that $\Omega_\text{max}>0$ (rotating case). 
\end{assumption}
Under the assumptions above, $S_{\Omega,\omega}$ \cref{action} is well-defined on $X$ and we have
\begin{equation}\label{lem:control_rotation}
    \left| \int_{\R^d} \overline{\phi} L_z \phi\, \rmd \vx \right| \lesssim \| \phi \|_X^2,
\end{equation}
where the constant depends exclusively on $V$. In fact, by Assumption \ref{lem:S-welldef}, for any $0< \gamma < \Omega_\text{max} $, there exists $r_0>0$ such that $V(\vx) \geq \frac{\gamma^2}{2}(x_1^2 + x_2^2)$, $|\vx| \geq r_0$. Then \cref{lem:control_rotation} follows from
\begin{align*}
    &\left| \int_{\mathbb{R}^d} \overline{\phi}L_z\phi\, \rmd\mathbf{x}\right|
\leq \int_{\mathbb{R}^d} \left[\frac{1}{2}|\nabla\phi|^2 + \frac{1}{2}(x_1^2+x_2^2)|\phi|^2 \right] \rmd \vx \notag \\
=&\int_{\mathbb{R}^d}\frac{1}{2}|\nabla\phi|^2\rmd\vx + \frac{1}{2}\int_{|\vx| \geq r_0}  (x_1^2+x_2^2)|\phi|^2 \rmd \vx + \frac{1}{2}\int_{|\vx| < r_0}  (x_1^2+x_2^2)|\phi|^2 \rmd \vx \nonumber \\
\leq& \int_{\mathbb{R}^d} \left[\frac{1}{2}|\nabla\phi|^2 + \frac{1}{\gamma^2}V(\vx)|\phi|^2 \right] \rmd \vx+ r_0^2\|\phi\|_{L^2}^2 \lesssim \| \phi \|_X^2.
\end{align*}

To simplify the presentation, we assume, without loss of generality, $\Omega \geq 0$ and $\beta = 1$ in the rest of the paper. The two parameters $\omega$ and $\Omega$ will decisively affect the features of the action ground state, and they will be mainly concerned in our study.

\subsection{Related functionals}
To study the variational properties of the problem, it is convenient to introduce some more functionals that are closely related to the action. 
\begin{enumerate}[label= \roman*)]

\item \emph{Quadratic functional:} By excluding the $L^{p+1}$-term in the action $S_{\Omega,\omega}$ (\ref{action}), a quadratic functional is defined as
	\begin{equation*}\begin{split}
		Q_{\Omega,\omega}(\phi) :=& \int_{\mathbb{R}^d}\left(\frac12 |\nabla \phi|^2 + V|\phi|^2 + \omega|\phi|^2 - \Omega \overline{\phi} L_z \phi \right)\mathrm{d} \mathbf{x}\\
=&S_{\Omega,\omega}(\phi)-\frac{2}{p+1}\|\phi\|_{L^{p+1}}^{p+1}, \quad \phi \in X.\end{split}
	\end{equation*}
\item \emph{Nehari functional:}
The Nehari functional reads
	\begin{equation}\label{eq:K_def}
		K_{\Omega,\omega} (\phi) := Q_{\Omega,\omega}(\phi) +\| \phi \|_{L^{p+1}}^{p+1}=S_{\Omega,\omega}(\phi) + \frac{p-1}{p+1} \| \phi \|_{L^{p+1}}^{p+1}, \quad \phi \in X
	\end{equation}
and it induces the \emph{Nehari manifold}
$\mathcal{M}:=\{\phi\in X\backslash\{0\}:K_{\Omega,\omega}(\phi)=0\}.$
\end{enumerate}
It is known\cite{Hajaiej,Fukuizumi,LYZ} that, the action ground state (\ref{phi_g-def}) can be equivalently written as the minimizer of \cref{action} on the Nehari manifold $\mathcal{M}$, i.e.,
$ \phi_g \in \mathcal{A}_{\Omega, \omega} \Leftrightarrow\phi_g \in \argmin\{S_{\Omega,\omega}(\phi) : \phi\in \mathcal{M}\} $. Moreover, as one key property for the defocusing action ground state, another equivalent variational characterization of the action ground state as an unconstrained action minimizer on $X$ was established in Ref.~\refcite{LYZ}, i.e.,
$ \phi_g \in \mathcal{A}_{\Omega, \omega} \Leftrightarrow \phi_g \in \argmin\{S_{\Omega,\omega}(\phi): \phi\in X\} $. In fact, our analysis of the action ground states will be mainly based on this new variational characterization, and we shall recall it in the following.

Let $R:= - \frac12\Delta + V - \Omega L_z$ be the linear Schr\"odinger operator with rotation and define $\lambda_0(\Omega)$ as the smallest eigenvalue of $R$ on $X$, i.e.,
\begin{equation}\label{inf:linear with rotation}
\lambda_0(\Omega) := \inf_{0 \neq \phi \in X} \frac{\int_{\mathbb{R}^d} \left(\frac12|\nabla \phi|^2 + V|\phi|^2 - \Omega \overline{\phi} L_z \phi \right) \mathrm{d}\mathbf{x}}{\| \phi \|_{L^2}^2}.
\end{equation}
\begin{proposition}[Unconstrained minimization\cite{LYZ}]\label{thm:exist}
Let $\omega<-\lambda_0(\Omega)$. Under Assumption \ref{lem:S-welldef}, there exists a $\phi_g\in X$ such that
\begin{align}\label{eq:S_negative}
S_{\Omega,\omega}(\phi_g)=\inf_{\phi\in X} S_{\Omega,\omega}(\phi)=\inf_{\phi\in\mathcal{M}} S_{\Omega,\omega}(\phi)<0,
\end{align}
with $\mathcal{M}$ the Nehari manifold. Moreover, there exists some constant $M>0$ depending on $d, p, V, \Omega, \omega$ such that
\begin{equation}\label{lem:uniform_bound}
	\| \phi \|_X \leq M,\quad \forall \phi \in \{\phi\in X\ :\ S_{\Omega, \omega} (\phi) \leq 0\}.
\end{equation}
\end{proposition}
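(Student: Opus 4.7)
The plan is to combine the direct method of the calculus of variations with a scaling/Nehari argument.

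First I would verify $\inf_X S_{\Omega,\omega}<0$ by testing against a small multiple of an eigenfunction $\psi_0$ of the self-adjoint operator $R = -\tfrac{1}{2}\Delta + V - \Omega L_z$ at its ground-state eigenvalue $\lambda_0(\Omega)$. For $t>0$,
\[
S_{\Omega,\omega}(t\psi_0) = t^2\bigl(\lambda_0(\Omega)+\omega\bigr)\|\psi_0\|_{L^2}^2 + \frac{2t^{p+1}}{p+1}\|\psi_0\|_{L^{p+1}}^{p+1},
\]
and since $\lambda_0(\Omega)+\omega<0$ while $p+1>2$, the quadratic term dominates for small $t$.

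The heart of the argument is a coercivity estimate
\[
S_{\Omega,\omega}(\phi) \;\geq\; c_1\|\phi\|_X^2 + \tfrac{1}{p+1}\|\phi\|_{L^{p+1}}^{p+1} - C_1,\qquad \forall\,\phi\in X,
\]
with positive constants depending only on $d,p,V,\Omega,\omega$. To derive it I would apply \eqref{eq:rotenergy} with $\delta = 1-\eta$ for a small $\eta>0$ and split $V=\eta V+(1-\eta)V$. Since $|\Omega|<\Omega_{\max}$, $\eta$ can be chosen so that $\gamma^2:=\Omega^2/(1-\eta)^2<\Omega_{\max}^2$; the definition of $\Omega_{\max}$ then makes $(1-\eta)V(\vx)-\tfrac{\Omega^2}{2(1-\eta)}(x_1^2+x_2^2)$ tend to $+\infty$, hence bounded below. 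This absorbs the centrifugal part of the rotational bound into the potential and leaves a remainder of the form $-(W_0+|\omega|)\|\phi\|_{L^2}^2$. I would dispose of this remainder by decomposing $\R^d$ into $\{|\phi|\geq M_0\}$ (where $|\phi|^2\leq M_0^{1-p}|\phi|^{p+1}$), $\{|\phi|<M_0\}\cap\{|\vx|\leq R_0\}$ (where the integral is bounded by $M_0^2|B_{R_0}|$), and $\{|\vx|>R_0\}$ (where $V$ is large enough to absorb the $L^2$-part into a fraction of $\eta\int V|\phi|^2\rmd\vx$). Tuning $M_0,R_0$ yields the coercivity estimate, and the $X$-bound \eqref{lem:uniform_bound} follows immediately upon restricting to $\{S_{\Omega,\omega}\leq 0\}$.

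With coercivity in hand, a minimizing sequence $\{\phi_n\}$ is bounded in $X$; extracting $\phi_n\rightharpoonup\phi_g$ and using \cref{lem:Xembed} together with the compact embedding $X\hookrightarrow L^2((x_1^2+x_2^2)\rmd\vx)$ (which holds because $V$ dominates $x_1^2+x_2^2$ at infinity), I recover strong convergence of all but the gradient term. In particular, $L_\Omega(\phi_n)\to L_\Omega(\phi_g)$ by combining strong convergence of $x_i\phi_n$ in $L^2$ with weak convergence of $\nabla\phi_n$ in $L^2$. Weak lower semicontinuity of $\|\nabla\cdot\|_{L^2}^2$ then gives $S_{\Omega,\omega}(\phi_g)\leq\inf_X S_{\Omega,\omega}<0$, so $\phi_g\neq 0$. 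Finally, $\phi_g$ is an unconstrained critical point of $S_{\Omega,\omega}$, hence
\[
0 \;=\; \tfrac{d}{dt}\Big|_{t=1}S_{\Omega,\omega}(t\phi_g) \;=\; 2K_{\Omega,\omega}(\phi_g),
\]
i.e., $\phi_g\in\mathcal{M}$; together with the trivial inequality $\inf_X S_{\Omega,\omega}\leq\inf_{\mathcal{M}}S_{\Omega,\omega}$ this yields the equalities in \eqref{eq:S_negative}.

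I expect the coercivity step to be the main obstacle: the rotational term, the $\omega\|\phi\|_{L^2}^2$ contribution, and the indefinite interior part of the quadratic form must all be absorbed into the defocusing nonlinearity and a fraction of the quadratic part. The trapping assumption $|\Omega|<\Omega_{\max}$ is precisely tailored to make this absorption possible at infinity, and the three-region splitting above is the technical heart that converts the far-field control of $V$ into a uniform estimate.
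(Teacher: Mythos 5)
Your overall architecture---negativity of the infimum via a small multiple of the ground mode of $R$, coercivity by absorbing the rotation term and the $L^2$ remainder into the quadratic part and the defocusing nonlinearity, the direct method, and the Nehari identity $\frac{d}{dt}\big|_{t=1}S_{\Omega,\omega}(t\phi_g)=2K_{\Omega,\omega}(\phi_g)$---is sound and is essentially the route of \cite{LYZ}, to which the paper defers for this proposition. The coercivity step works as you describe; the only cosmetic adjustment is that to obtain a full multiple of $\|\phi\|_X^2$ on the right-hand side you should absorb $(C_2+c_1)\|\phi\|_{L^2}^2$ rather than $C_2\|\phi\|_{L^2}^2$ in your three-region splitting, so that a positive multiple of $\|\phi\|_{L^2}^2$ survives.

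The genuine gap is the claim that $X\hookrightarrow L^2\big((x_1^2+x_2^2)\,\rmd\vx\big)$ compactly ``because $V$ dominates $x_1^2+x_2^2$ at infinity.'' Under \cref{lem:S-welldef} one only has $V\geq\frac{\gamma^2}{2}(x_1^2+x_2^2)-C$ for $\gamma<\Omega_{\max}$, i.e.\ domination up to a constant factor, not $(x_1^2+x_2^2)=o(V)$, and the latter is what such a compactness statement would require. For the prototypical admissible potential $V=\frac12|\vx|^2$ in $d=2$ the embedding is in fact not compact: the translated bumps $\phi_n=n^{-1}\chi(\cdot-n e_1)$ are bounded in $X$, converge weakly to $0$, yet $\int_{\R^2}(x_1^2+x_2^2)|\phi_n|^2\rmd\vx$ stays bounded away from $0$. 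Consequently your mechanism for passing to the limit in $L_\Omega$ (strong $L^2$ convergence of $x_i\phi_n$ paired with weak convergence of $\nabla\phi_n$) fails as written. The standard repair---and the device the paper itself uses in the proof of \cref{prop:aux_convergence}---is not to isolate $L_\Omega$ at all: write $S_{\Omega,\omega}(\phi)=\|\phi\|_{R(\kappa)}^2-(\kappa-\omega)\|\phi\|_{L^2}^2+\frac{2}{p+1}\|\phi\|_{L^{p+1}}^{p+1}$ with $\|\cdot\|_{R(\kappa)}=\sqrt{Q_{\Omega,\kappa}(\cdot)}$ the equivalent norm from \cref{eq:inner_R_def}, and use weak lower semicontinuity of that norm together with the compact embeddings of \cref{lem:Xembed} for the $L^2$ and $L^{p+1}$ terms. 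With this substitution the remainder of your argument goes through.
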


\begin{remark}
The assumption for the potential function $V$ in \cref{thm:exist} is a bit more general than that in Ref. \refcite{LYZ}, where the proof remains essentially the same with subtle modifications. Such generalized assumption allows us to cover more practical cases such as the harmonic-plus-lattice potential of the form $V(\mathbf{x})=\sum_{j=1}^d\big(\frac12 \gamma_j^2x_j^2+\kappa\sin^2(\pi x_j/2)\big)$ and the harmonic-plus-quartic potential of the form $V(\mathbf{x})=\frac{c_1}{4}(|\mathbf{x}|^2-c_2)^2$, where $\kappa,c_1,c_2>0$.
\end{remark}

In the rest of this paper, we denote the action ground states as $\phi_{\Omega, \omega}$ to more clearly show its dependence on $\omega$ and $\Omega$.
For the non-rotating case (i.e., $\Omega=0$), we adopt the following simplified notations by dropping the $\Omega$ dependence
\begin{equation}\label{eq:simplified_not}
S_{\omega}=S_{0,\omega},\quad  K_{\omega} =K_{0,\omega},\quad  \phi_{\omega} = \phi_{0, \omega}, \quad \mathcal{A}_{\omega} = \mathcal{A}_{0, \omega}, \quad  \mathcal{B}_m = \mathcal{B}_{0, m}.
\end{equation}
Also, we abbreviate $\lambda_0(\Omega)$ with $\Omega = 0$ as $\omega_0$:
\begin{equation}\label{eq:omega_0_def}
    \omega_0  := \lambda_0(0) = \inf_{0 \neq \phi \in X} \frac{ \int_{\mathbb{R}^d} \left( \frac12|\nabla \phi|^2 + V|\phi|^2 \right) \mathrm{d} \mathbf{x}}{\| \phi \|_{L^2}^2} > 0.
\end{equation}

\section{Non-rotating case}\label{sec:3}
In this section, we consider the non-rotating case (i.e., $ \Omega = 0$) and prove a complete equivalence between the action ground states $\phi_\omega$ and the energy ground states $\phi^E_m$:
\begin{equation*}
    \phi_\omega\in \mathcal{A}_\omega = \argmin_{\phi \in X} S_{\omega}(\phi), \qquad \phi_{m}^E \in \mathcal{B}_m=\argmin_{\substack{\phi \in X\\ \|\phi\|_{L^2}^2=m>0}} E(\phi).
\end{equation*}
Simplified notations in \eqref{eq:simplified_not} and \eqref{eq:omega_0_def} are adopted throughout this section.

\subsection{Action ground states are energy ground states}
    \begin{theorem}\label{thm:actiontoenergy}
        For any $ \omega < - \omega_0 $, an action ground state $ \phi_\omega $ associated with $\omega$ is also an energy ground state with mass $ m = \| \phi_\omega \|_{L^2}^2 $.
    \end{theorem}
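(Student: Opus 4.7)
The plan is to exploit the unconstrained variational characterization stated in \cref{thm:exist}, which is the crucial structural feature of the defocusing regime. Since $\phi_\omega$ minimizes $S_\omega$ over \emph{all} of $X$ rather than over a submanifold, comparing it against a mass-constrained competitor will be essentially immediate.

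First I would fix the notation $m := \| \phi_\omega \|_{L^2}^2$ and verify that $m > 0$, so that the set $\mathcal{B}_m$ is a meaningful object. This follows because $\omega < -\omega_0$ gives $S_\omega(\phi_\omega) < 0$ by \cref{eq:S_negative} in \cref{thm:exist}, while $S_\omega(0) = 0$, so $\phi_\omega \not\equiv 0$. In particular $E(\phi_\omega) < \infty$ since $\phi_\omega \in X$, so $\phi_\omega$ is an admissible competitor in the definition \eqref{energyGS-def} of $\mathcal{B}_m$.

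Next I would take an arbitrary $\psi \in X$ with $\|\psi\|_{L^2}^2 = m$ and $E(\psi) < \infty$, and simply use the identity
\begin{equation*}
    S_\omega(\psi) = E(\psi) + \omega \| \psi \|_{L^2}^2 = E(\psi) + \omega m, \qquad S_\omega(\phi_\omega) = E(\phi_\omega) + \omega m.
\end{equation*}
Since $\phi_\omega$ is an unconstrained minimizer of $S_\omega$ on $X$, we have $S_\omega(\phi_\omega) \le S_\omega(\psi)$, and subtracting the common constant $\omega m$ yields $E(\phi_\omega) \le E(\psi)$. Because $\psi$ was arbitrary on the mass shell, this says exactly that $\phi_\omega$ attains the minimum in \eqref{energyGS-def}, i.e., $\phi_\omega \in \mathcal{B}_m$.

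There is no real obstacle here; the content of the theorem is that the unconstrained minimization property from \cite{LYZ} makes the passage from prescribed frequency to prescribed mass trivial in one direction. The only point that requires a word of explanation is ruling out $m = 0$, and that is handled by the strict negativity of the action at the ground state. The harder direction, the reverse implication, will be the substance of the next subsection.
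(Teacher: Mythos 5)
Your proposal is correct and follows essentially the same route as the paper: both rest on the unconstrained minimization characterization of \cref{thm:exist} and the identity $S_\omega(\phi)=E(\phi)+\omega m$ on the mass shell, so that minimality of $S_\omega$ over all of $X$ immediately implies minimality of $E$ under the constraint $\|\phi\|_{L^2}^2=m$. Your additional remark ruling out $m=0$ via $S_\omega(\phi_\omega)<0$ is a harmless (and valid) refinement that the paper leaves implicit.
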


	\begin{proof}
		By \cref{thm:exist}, $\phi_\omega$ is a global minimizer of $S_{\omega}$ in $X$. We use proof by contradiction. Suppose that there exists $ \tilde{\phi} \in X$ such that $\| \tilde{\phi}\|_{L^2}^2 = m $ and $E(\tilde{\phi}) < E(\phi_\omega)$, then $S_\omega(\tilde{\phi}) < S_\omega(\phi_\omega)$, and this leads to a contraction, which  concludes the proof.
	\end{proof}

    \subsection{Energy ground states are action ground states}\label{sec:3.2}
        \begin{theorem}\label{thm:energytoaction}
        For any $ m > 0 $, an energy ground state $ \phi^E_m $ with mass $m$ is also an action ground state associated with $ \omega = -\mu(\phi^E_m) $ where $\mu$ is defined in \eqref{eq:mu_def} with $\Omega = 0$.
    \end{theorem}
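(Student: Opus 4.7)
The plan is to combine \cref{thm:actiontoenergy} with a uniqueness statement for non-negative solutions of the stationary equation. In the non-rotating case, both $E$ and $S_\omega$ are invariant under $\phi \mapsto e^{i\theta}\phi$, while Kato's inequality $|\nabla|\phi|| \leq |\nabla \phi|$ gives $E(|\phi|) \leq E(\phi)$ and $S_\omega(|\phi|) \leq S_\omega(\phi)$. Since the mass is also invariant under $\phi \mapsto |\phi|$, both sides of the sought equivalence are insensitive to passing to a non-negative representative, so we may reduce to that case.

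I would first set $\omega := -\mu(\phi^E_m)$ and verify $\omega < -\omega_0$, which is required to apply \cref{thm:exist}. Indeed, by \cref{eq:mu_def} with $\beta = 1$ and $V \geq 0$,
\[
\mu(\phi^E_m) = \frac{1}{m}\left[\tfrac{1}{2}\|\nabla\phi^E_m\|_{L^2}^2 + \int_{\R^d}V|\phi^E_m|^2\,\rmd\vx + \|\phi^E_m\|_{L^{p+1}}^{p+1}\right] \geq \omega_0 + \frac{\|\phi^E_m\|_{L^{p+1}}^{p+1}}{m} > \omega_0,
\]
by \cref{eq:omega_0_def} and $\phi^E_m \not\equiv 0$. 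Thus \cref{thm:exist} furnishes an action ground state $\phi_\omega$, which by the same symmetrization may be taken non-negative. Applying \cref{thm:actiontoenergy} to $\phi_\omega$, one learns that $\phi_\omega$ is itself an energy ground state with mass $\tilde m := \|\phi_\omega\|_{L^2}^2$, and since $\phi_\omega$ solves $H(\phi_\omega) = 0$ with frequency $\omega$, we obtain $\mu(\phi_\omega) = -\omega = \mu(\phi^E_m)$.

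At this point $\phi^E_m$ and $\phi_\omega$ are two non-negative, non-trivial $X$-solutions of the same semilinear elliptic problem
\[
-\tfrac{1}{2}\Delta u + V u + u^p = \mu u, \qquad \mu := \mu(\phi^E_m).
\]
A Brezis--Oswald type uniqueness argument---available because $s \mapsto s^{p-1}$ is strictly increasing on $(0,\infty)$, the hallmark of defocusing nonlinearity---forces $\phi^E_m = \phi_\omega$, and in particular $m = \tilde m$. This immediately yields
\[
S_\omega(\phi^E_m) = E(\phi^E_m) + \omega m = E(\phi_\omega) + \omega \tilde m = S_\omega(\phi_\omega) = \inf_{\phi \in X}S_\omega(\phi),
\]
hence $\phi^E_m \in \mathcal{A}_\omega$, as desired.

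The principal obstacle is the uniqueness step. The classical Brezis--Oswald theorem is formulated on bounded domains, while our PDE lives on $\R^d$ with a confining potential in the $L^2_V$ framework of \cref{lem:S-welldef}. I would adapt a Picone-type identity---testing the equations for $\phi^E_m$ and $\phi_\omega$ against $\phi^E_m - \phi_\omega^2/\phi^E_m$ and its symmetric counterpart (with appropriate truncation away from the nodal set)---and exploit the strict monotonicity of $s^{p-1}$, verifying that the boundary and tail contributions vanish thanks to the exponential decay of the ground states induced by the confining $V$.
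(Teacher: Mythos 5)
Your proposal is a genuinely different route from the paper's. The paper never invokes uniqueness of solutions of the stationary PDE: it proves that the ground-state mass map $M_g(\omega)=\|\phi_\omega\|_{L^2}^2$ is injective (\cref{lem1}), continuous (\cref{lem2}), and tends to $0$ and $\infty$ at the endpoints (\cref{lem3}), hence is a strictly decreasing bijection from $(-\infty,-\omega_0)$ onto $(0,\infty)$; given $m$, it then picks the unique $\omega$ with $M_g(\omega)=m$ and compares $\phi^E_m$ with $\phi_\omega$ at \emph{equal mass}, where only the variational uniqueness of \cref{lem:uniqueness} (strict convexity of $S_\omega(\sqrt\rho)$ in $\rho=|\phi|^2$) is needed. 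You instead match \emph{chemical potentials}: you set $\omega=-\mu(\phi^E_m)$ directly (your verification that $\omega<-\omega_0$ is correct and is a nice observation the paper does not spell out), and then identify $\phi^E_m$ with the action ground state $\phi_\omega$ as two non-negative solutions of the same elliptic equation. If that identification is granted, the rest of your argument is sound, and your approach buys a shorter structure: it dispenses entirely with the continuity and limit analysis of $M_g$.

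The caveat is that your load-bearing step --- Brezis--Oswald uniqueness of non-negative $X$-solutions of $-\tfrac12\Delta u+Vu+u^p=\mu u$ on $\R^d$ --- is only sketched, and it is not available off the shelf under \cref{lem:S-welldef}, which assumes nothing about $V$ beyond $V\ge 0$ and confinement. The Picone/Brezis--Oswald computation requires dividing by the solutions, i.e.\ strict positivity, and the paper itself (Remark 3.4) only guarantees strict positivity of $\phi_\omega$ and $\phi^E_m$ under the additional hypothesis $V\in L^2_{\mathrm{loc}}$; one must also justify the integration by parts at infinity via decay estimates. So as written your proof carries an unproved lemma whose verification under the paper's minimal hypotheses is the genuinely hard part, whereas the paper's detour through $M_g$ avoids any PDE-level uniqueness at the cost of the compactness arguments in \cref{lem2}. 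Note also that weaker substitutes do not suffice: knowing that both functions are energy ground states (at possibly different masses) with the same $\mu$ does not by itself force the masses to agree, since the injectivity of $m\mapsto\mu(\phi^E_m)$ is not established independently.
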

    The proof will be given with the help of the following lemmas.
    \begin{lemma}[Uniqueness]\label{lem:uniqueness}
		Let $ \omega < - \omega_0 $. For any $\phi^1_\omega, \phi^2_\omega \in \mathcal{A}_\omega$, we have
		\begin{equation}\label{eq:unique_action}
			| \phi^1_\omega(\vx) | = |\phi^2_\omega(\vx)| \quad \text{\rm a.e. } \vx \in \R^d.
		\end{equation}
		Let $ m > 0 $. For any $\phi^{E,1}_m, \phi^{E, 2}_m \in \mathcal{B}_m$, we have
		\begin{equation}\label{eq:unique_energy}
			| \phi^{E,1}_m(\vx) | = |\phi^{E,2}_m(\vx)| \quad \text{\rm a.e. } \vx \in \R^d.
		\end{equation}
    \end{lemma}

    \begin{proof}
	By the inequality (see Ref.~\refcite{Lieb})
 	\begin{equation}\label{eq:gradient}
            \int_{\mathbb{R}^d}|\nabla|\phi||^2\rmd\vx \leq \int_{\mathbb{R}^d}|\nabla\phi|^2 \rmd\vx, \quad \phi \in H^1(\mathbb{R}^d),
 	\end{equation}
 	recalling \eqref{action}, we get
 	\begin{equation}
 		S_{\omega}(|\phi|)\leq S_{\omega}(\phi), \qquad \phi \in X,
 	\end{equation}
 	which implies that, if $\phi$ is an action ground state, $|\phi|$ is also an action ground state. Then \eqref{eq:unique_action} holds if we have the uniqueness of the nonnegative action ground state or the density.
 		
    Consider the density $ \rho(\vx) = |\phi(\vx)|^2 \geq 0 $ with $\sqrt{\rho} \in X$. Following the process in Ref.~\refcite[Lemma~A.1]{Lieb2000}, we have $S_{\omega}(\sqrt{\rho}) $ is strictly convex with respect to $\rho$, which implies the uniqueness of the density of action ground states and proves \eqref{eq:unique_action}. The proof of \eqref{eq:unique_energy} is similar and can be found in Refs.~\refcite{BaoCai,Lieb2000}.
    \end{proof}

\begin{remark}
In \cref{lem:uniqueness}, if, in addition, $ V\in L^2_{\mathrm{loc}} $, then $\phi_\omega$ and $\phi^E_m$ can be chosen as strictly positive, and the action and energy ground states are unique up to a constant phase shift (see Refs.~\refcite{Lieb,Lieb2000}).
\end{remark}

    The proof of \cref{thm:energytoaction} will be done by studying the properties of the mass of action ground states as a function of $\omega$, which is well-defined due to \cref{lem:uniqueness}. Define the action ground state mass $M_g:(-\infty, -\omega_0) \rightarrow \R^+$ as
    \begin{equation}\label{eq:Mg_def}
        M_g(\omega) := \| \phi_\omega \|_{L^2}^2, \quad \omega < -\omega_0.
    \end{equation}
    In the proof of the following \cref{lem1,lem2,lem3}, we assume that $ \phi_\omega $ is the unique nonnegative action ground state associated with $ \omega < - \omega_0 $ and $ \phi^E_m $ is the unique nonnegative energy ground state associated with $ m > 0 $.

    \begin{lemma}\label{lem1}
        The function $ M_g $ is injective on $(-\infty, -\omega_0)$, i.e., for any $ \omega_1, \omega_2 \in (-\infty, - \omega_0) $, $ M_g(\omega_1) = M_g(\omega_2) $ implies $ \omega_1 = \omega_2 $.
    \end{lemma}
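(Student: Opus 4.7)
My plan is to argue by contradiction, leveraging the two main tools already established in this section: the action-to-energy result (Theorem~3.1) and the uniqueness of nonnegative energy ground states (Lemma~3.3). Suppose $\omega_1, \omega_2 \in (-\infty, -\omega_0)$ with $\omega_1 \neq \omega_2$ and $M_g(\omega_1) = M_g(\omega_2) =: m > 0$. Let $\phi_{\omega_1}$ and $\phi_{\omega_2}$ denote the nonnegative action ground states associated with $\omega_1$ and $\omega_2$, respectively (whose existence and uniqueness are guaranteed by Lemma~3.3).

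By Theorem~3.1, both $\phi_{\omega_1}$ and $\phi_{\omega_2}$ are energy ground states with the same prescribed mass $m$. Since both are nonnegative, the uniqueness of the nonnegative energy ground state with mass $m$ (the second part of Lemma~3.3) forces
\begin{equation*}
    \phi_{\omega_1}(\vx) = \phi_{\omega_2}(\vx) =: \phi(\vx) \quad \text{a.e. } \vx \in \R^d.
\end{equation*}

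Next, I would use the fact that action ground states are stationary solutions, i.e., $\phi_{\omega_i}$ satisfies $H(\phi_{\omega_i}) = 0$ from \cref{model0} with frequency $\omega_i$ (and with $\Omega=0$, $\beta=1$). Applied to the common function $\phi$, this gives
\begin{equation*}
    -\tfrac{1}{2}\Delta \phi + V\phi + \phi^{p} + \omega_1 \phi = 0, \qquad -\tfrac{1}{2}\Delta \phi + V\phi + \phi^{p} + \omega_2 \phi = 0.
\end{equation*}
Subtracting the two identities yields $(\omega_1 - \omega_2)\phi = 0$. Since $\|\phi\|_{L^2}^2 = m > 0$, we have $\phi \not\equiv 0$, which forces $\omega_1 = \omega_2$, contradicting our assumption.

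I do not anticipate a substantive obstacle here: the entire argument is a short syllogism built on top of Theorem~3.1 and Lemma~3.3. The only point that deserves explicit care is matching the sign conventions so that the two nonnegative representatives are the \emph{same} object (rather than merely having identical modulus), which is what the uniqueness statement \cref{eq:unique_energy} supplies directly. The use of the Euler--Lagrange equation at the end is the only new ingredient, and it is immediate from the unconstrained minimization characterization recalled in Theorem~2.2 (any unconstrained minimizer of $S_{\omega}$ solves $H(\phi)=0$).
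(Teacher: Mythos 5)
Your proposal is correct and follows essentially the same route as the paper's own proof: apply \cref{thm:actiontoenergy} to identify the action ground states as energy ground states of the common mass, invoke the uniqueness in \cref{lem:uniqueness} to conclude they coincide, and then subtract the two stationary equations to force $\omega_1=\omega_2$. The only cosmetic difference is that you phrase it as a contradiction while the paper argues directly.
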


    \begin{proof}
        Let $ m_1 = M_g(\omega_1) $ and $ m_2 = M_g(\omega_2) $. By \cref{thm:actiontoenergy}, we have $ \phi_{\omega_1} = \phi^E_{m_1} $ and $ \phi_{\omega_2} = \phi^E_{m_2} $. If $ m_1 = m_2 $, by the uniqueness of non-negative energy ground states in \cref{lem:uniqueness}, we have $ \phi^E_{m_1} = \phi^E_{m_2} $ and thus $ \phi_{\omega_1} = \phi_{\omega_2} =: \phi_\ast $. Then $ \phi_\ast $ will solve the stationary NLS
	\begin{equation}
		- \frac12\Delta \phi_\ast + V \phi_\ast + \omega \phi_\ast + |\phi_\ast|^{p-1}\phi_\ast = 0, \quad \omega = \omega_1, \omega_2,
	\end{equation}
	which implies $ \omega_1 = \omega_2 $ and completes the proof.
    \end{proof}
	
    \begin{lemma}\label{lem2}
        The function $ M_g(\omega) := \| \phi_\omega \|_{L^2}^2 $ is continuous on $ (-\infty, -\omega_0) $.
    \end{lemma}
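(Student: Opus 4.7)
The plan is to combine a monotonicity/Lipschitz property of the value function $S_*(\omega) := S_\omega(\phi_\omega) = \inf_{\phi \in X} S_\omega(\phi)$ with a standard weak-compactness argument applied to the minimizers, using the uniqueness from \cref{lem:uniqueness} to pin down the limit.

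First, I would derive, purely by testing each $S_{\omega_i}$ against the other's minimizer $\phi_{\omega_{3-i}}$, the two-sided bound
\[
M_g(\omega_2)\;\leq\;\frac{S_*(\omega_2) - S_*(\omega_1)}{\omega_2 - \omega_1}\;\leq\; M_g(\omega_1),\qquad \omega_1 < \omega_2 \text{ in } (-\infty, -\omega_0).
\]
This simultaneously shows that $M_g$ is non-increasing and that $S_*$ is locally Lipschitz, hence continuous, on $(-\infty, -\omega_0)$. This is the key input that removes the need to verify that the constant $M$ in \cref{thm:exist} can be taken uniformly in $\omega$.

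Next, fix $\omega_*$ and a sequence $\omega_n \to \omega_*$ in $(-\infty,-\omega_0)$, and pick the nonnegative representatives $\phi_n := \phi_{\omega_n}\geq 0$ provided by \cref{lem:uniqueness}. For large $n$, $\omega_n$ belongs to some closed subinterval $I$, and monotonicity gives $\|\phi_n\|_{L^2}^2 = M_g(\omega_n)\leq \sup_I M_g < \infty$. Since $\phi_n$ lies on the Nehari manifold $\mathcal{M}$, the identity $S_*(\omega_n) = -\tfrac{p-1}{p+1}\|\phi_n\|_{L^{p+1}}^{p+1}$ together with the Lipschitz bound on $S_*$ controls $\|\phi_n\|_{L^{p+1}}^{p+1}$; one more use of $K_{\omega_n}(\phi_n) = 0$ then bounds $\tfrac12\|\nabla\phi_n\|_{L^2}^2 + \int V|\phi_n|^2$, so $\{\phi_n\}$ is bounded in $X$.

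By the compact embedding of \cref{lem:Xembed}, a subsequence $\phi_{n_k}$ converges weakly in $X$ and strongly in $L^2 \cap L^{p+1}$ to some $\phi_\infty \geq 0$. Weak lower semicontinuity of the Dirichlet and potential terms, together with strong $L^2$ and $L^{p+1}$ convergence and the continuity of $S_*$, yield
\[
S_{\omega_*}(\phi_\infty)\;\leq\;\liminf_k S_{\omega_*}(\phi_{n_k}) \;=\;\liminf_k\bigl[S_*(\omega_{n_k})+(\omega_* - \omega_{n_k})\|\phi_{n_k}\|_{L^2}^2\bigr]\;=\; S_*(\omega_*),
\]
so $\phi_\infty \in \mathcal{A}_{\omega_*}$. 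The uniqueness of the nonnegative action ground state in \cref{lem:uniqueness} forces $\phi_\infty = \phi_{\omega_*}$, and a subsequence-of-subsequence argument upgrades $\|\phi_{n_k}\|_{L^2}^2 \to M_g(\omega_*)$ to convergence of the full sequence. The one step where care is genuinely needed is the $X$-boundedness in the previous paragraph: without the monotonicity produced in the first step, one would have to reopen the proof of \cref{thm:exist} to check uniformity of $M$ on compact subsets of $(-\infty, -\omega_0)$, and the comparison inequality bypasses this entirely.
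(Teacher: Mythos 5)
Your proof is correct and follows the same overall architecture as the paper's: establish continuity of the value function $S_*(\omega)=S_\omega(\phi_\omega)$ by testing each minimization problem against the other's minimizer, then run a compactness and weak-lower-semicontinuity argument and invoke the uniqueness from \cref{lem:uniqueness} to identify the limit, finishing with a subsequence-of-subsequences step. The one place you genuinely diverge is how the uniform $X$-bound on the minimizers is obtained. You derive it from the monotonicity of $M_g$ (an immediate by-product of your two-sided difference-quotient inequality, and not something the paper records at this stage) combined with the Nehari identity $S_*(\omega)=-\frac{p-1}{p+1}\|\phi_\omega\|_{L^{p+1}}^{p+1}$ and $K_\omega(\phi_\omega)=0$; this is self-contained and arguably cleaner. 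The paper instead observes that for $\omega\in(\omega_\ast-\delta,\omega_\ast)$ one has $S_{\omega_\ast-\delta}(\phi_\omega)<S_\omega(\phi_\omega)<0$, so all these minimizers lie in the single sublevel set $\{\phi: S_{\omega_\ast-\delta}(\phi)\le 0\}$, and \cref{lem:uniform_bound} applied at the one fixed frequency $\omega_\ast-\delta$ already yields the bound; so, contrary to your closing remark, the paper's route does not require checking that the constant $M$ in \cref{thm:exist} is uniform over compact $\omega$-intervals either. Both arguments are valid; yours has the side benefit of exhibiting the (non-strict) monotonicity of $M_g$ directly, which the paper only extracts afterwards by combining \cref{lem1,lem2}.
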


    \begin{proof}
        We first show that $S_{{\omega_\ast}}(\phi_{\omega})\to S_{\omega_\ast}(\phi_{\omega_\ast})$ as $\omega \rightarrow \omega_\ast$ for any fixed $\omega_\ast \in (-\infty, -\omega_0)$. Recalling the variational characterization of the action ground states in \eqref{eq:S_negative} and the expression of the action functional \eqref{action}, we obtain
	\begin{align}
		&S_{\omega_\ast}(\phi_{\omega_\ast})<S_{\omega_\ast}(\phi_{\omega})<S_{\omega}(\phi_{\omega})<S_{\omega}(\phi_{\omega_\ast}), \qquad \omega_\ast<\omega<-\omega_0, \label{eq:Somega_1}\\
		&S_{\omega}(\phi_{\omega})<S_{\omega}(\phi_{\omega_\ast})<S_{\omega_\ast}(\phi_{\omega_\ast})<S_{\omega_\ast}(\phi_{\omega}), \qquad \omega<\omega_\ast. \label{eq:Somega_2}
	\end{align}
	Here, the equality case $S_{\omega_\ast}(\phi_{\omega_\ast}) = S_{\omega_\ast}(\phi_{\omega})$ is excluded since otherwise $\phi_\omega$ is a minimizer of both $S_\omega$ and $S_{\omega_\ast}$, and thus satisfies \cref{model0} for both $\omega$ and $\omega_\ast$, which is impossible. The same argument excludes $S_{\omega}(\phi_{\omega}) = S_{\omega}(\phi_{\omega_\ast})$. Then for any $\omega_\ast<\omega<-\omega_0$, by \eqref{eq:Somega_1}, we have
	\begin{align}\label{eq:Somg-est1}
		|S_{\omega_\ast}(\phi_{\omega})-S_{\omega_\ast}(\phi_{\omega_\ast})|<|S_{\omega}(\phi_{\omega_\ast})-S_{\omega_\ast}(\phi_{\omega_\ast})|=|\omega-\omega_\ast|M_g(\omega_\ast).
	\end{align}
	Then, for $\omega < \omega_\ast$, by \eqref{eq:Somega_2}, we have
	\begin{equation}\label{eq:Somg-est2}
		|S_{\omega_\ast}(\phi_{\omega})-S_{\omega_\ast}(\phi_{\omega_\ast})|<|S_{\omega_\ast}(\phi_{\omega})-S_{\omega}(\phi_{\omega})|=|\omega-\omega_\ast|M_g(\omega).
	\end{equation}
	For any fixed $\delta > 0$, when $\omega_\ast-\delta<\omega<\omega_\ast$, by \cref{action} and \cref{eq:S_negative}, we have
	\begin{equation}\label{eq:delta}
		S_{\omega_\ast-\delta}(\phi_{\omega})<S_{\omega}(\phi_{\omega})<0,
	\end{equation}
	which implies, by \eqref{lem:uniform_bound},
	\begin{equation}\label{eq:M_bound}
		M_g(\omega) \leq C, \quad \omega_\ast-\delta<\omega<\omega_\ast,
	\end{equation}
	where $C$ is some constant depending only on $\omega_\ast,\delta,d,p,V$. Combining \eqref{eq:Somg-est1}, \eqref{eq:Somg-est2} and \eqref{eq:M_bound} yields that $S_{\omega_\ast}(\phi_{\omega})\to S_{\omega_\ast}(\phi_{\omega_\ast})$ as $\omega\to\omega_\ast$.

        As a consequence, for any sequence $\{\omega^k\}_{k=1}^{\infty}\subset(-\infty,\omega_0)$ converging to $\omega_\ast$, $\{\phi_{\omega^k}\}_{k=1}^{\infty}$ minimizes the functional $S_{\omega_\ast}$ in $X$. Hence, by \eqref{eq:S_negative} and \eqref{lem:uniform_bound}, $\{\phi_{\omega^k}\}_{k=1}^{\infty}$ is uniformly bounded in $X$. Utilizing the same argument used in the proof of Theorem 3.1 in Ref. \refcite{LYZ}, we can easily apply the compact embedding given in Lemma~\ref{lem:Xembed} and the weak lower-semicontinuity of the norm $\|\cdot\|_{X}$ to conclude that $\{\phi_{\omega^k}\}_{k=1}^{\infty}$ contains a subsequence converging strongly to the unique nonnegative action ground state $\phi_{\omega_\ast}$ in $X$ (and therefore in $L^2(\mathbb{R}^d)$). The uniqueness of this limit leads to $M_g(\omega)=\|\phi_{\omega}\|^2_{L^2}\to\|\phi_{\omega_\ast}\|^2_{L^2}=M_g(\omega_\ast)$ as $\omega\to\omega_\ast$. The proof is completed.
    \end{proof}

    \begin{proposition}[Limits of mass in $\omega$]\label{lem3}
	$ M_g(\omega) \rightarrow 0 $ as $ \omega \rightarrow {-\omega_0}^{-} $ and $ M_g(\omega)  \rightarrow \infty $ as $ \omega \rightarrow - \infty $.
    \end{proposition}
    \begin{proof}
    We first show that $ M_g(\omega) \rightarrow 0 $ as $ \omega \rightarrow {-\omega_0}^{-} $. {By \cref{eq:S_negative,action,lem:uniform_bound}, similar to \cref{eq:delta},} we have $ \phi_\omega $ is uniformly bounded in $ X $ for $ \omega \in (-\omega_0 -\delta, -\omega_0) $ with any fixed $\delta > 0$. Recalling \cref{action,eq:omega_0_def}, we have
    \begin{equation}\label{S}
	S_\omega(\phi_\omega) \geq (\omega + \omega_0) \| \phi_\omega \|_{L^2}^2 + \frac{2}{p+1} \| \phi_\omega \|_{L^{p+1}}^{p+1} \geq (\omega + \omega_0) \| \phi_\omega \|_{L^2}^2.
    \end{equation}
    {From \cref{S}, recalling \cref{eq:K_def} and $K_\omega(\phi_\omega) = 0$, and $S_\omega(\phi_\omega)<0$ in \cref{eq:S_negative}}, we obtain
    \begin{equation}
	0 > S_\omega(\phi_\omega) = -\frac{p-1}{p+1}\| \phi_\omega \|_{L^{p+1}}^{p+1} \geq (\omega + \omega_0) \| \phi_\omega \|_{L^2}^2 \rightarrow 0\quad  \mbox{as}\quad \omega \rightarrow {-\omega_0}^{-}.
    \end{equation}
    It follows that $  \| \phi_\omega \|_{L^{p+1}}^{p+1} \rightarrow 0 $. To control $\| \phi_\omega \|_{L^2}$ by $\| \phi_\omega \|_{L^{p+1}}^{p+1}$, we claim that for any $A > 0$, there exists $r_0 > 0$ such that for $r > r_0$, we have
    \begin{equation}\label{controlL2byLp}
        \| \phi \|_{L^2}^2 \leq  |B_r|^\frac{p-1}{p+1} \| \phi \|_{L^{p+1}}^2 + \frac{1}{A} \| \phi \|_X^2, \quad \phi \in X,
    \end{equation}
    where $B_r=\{\vx\in\mathbb{R}^d: |\vx|<r\}$. In fact, by the confining condition $\lim_{|\vx| \rightarrow \infty} V(\vx) = +\infty$, for any $A > 0$, there exists $r_0>0$ such that $V(\vx) + 1 \geq A$ for any $ |\vx| \geq r_0 $. By H\"older's inequality, for any $r > r_0$, we have
    \begin{align*}
	\| \phi \|_{L^2}^2
	= \int_{B_r} |\phi|^2 \rmd \vx + \int_{|\vx| \geq r} |\phi|^2 \rmd \vx
        &\leq |B_r|^\frac{p-1}{p+1} \| \phi \|_{L^{p+1}}^2 + \frac{1}{A} \int_{|\vx| \geq r} (V(\vx) + 1) |\phi|^2 \rmd \vx \\
        &\leq |B_r|^\frac{p-1}{p+1} \| \phi \|_{L^{p+1}}^2 + \frac{1}{A} \| \phi \|_X^2,
    \end{align*}
    which proves the claim \eqref{controlL2byLp}. Since $ \phi_\omega $ is uniformly bounded in $ X $, one has $ \| \phi_\omega \|_{L^2}^2 \rightarrow 0 $ by \eqref{controlL2byLp}.

		Then we shall show that $ M_g(\omega) \rightarrow \infty $ as $ \omega \rightarrow - \infty $. Note that for all $ \phi \in X $,
		\begin{equation*}
			-\| \phi_\omega \|_{L^2}^2 \leq \frac{S_\omega(\phi_\omega)}{|\omega|} \leq \frac{S_\omega(\phi)}{|\omega|}
           = \frac{E(\phi)}{|\omega|} - \| \phi \|_{L^2}^2.
		\end{equation*}
  Namely, $M_g(\omega)\geq \| \phi \|_{L^2}^2-\frac{E(\phi)}{|\omega|}$, $\forall \phi \in X$. The arbitrariness of $\phi\in X$ implies that $ M_g(\omega) \rightarrow \infty $ as $ \omega \rightarrow -\infty $.
	\end{proof}

	\begin{proof}[Proof of \cref{thm:energytoaction}]
        By \cref{lem1,lem2}, we have $ M_g(\omega) $ is a strictly monotone function on $(-\infty, -\omega_0)$. Furthermore, by \cref{lem3}, $M_g(\omega)$ is strictly decreasing and is surjective from $(-\infty, -\omega_0)$ to $(0, \infty)$. Hence, for any $m>0$, there exists a unique $\omega$ satisfying $m=M_g(\omega)$.
		Let $\phi_\omega$ be an action ground state associated with $\omega$. By  \cref{thm:actiontoenergy}, we have
		\begin{equation}
			E(\phi_\omega) = E(\phi^E_m), \quad \| \phi_\omega \|_{L^2}^2 = \| \phi^E_m \|_{L^2}^2 = m,
		\end{equation}
		which implies
		\begin{equation}
			S_\omega(\phi^E_m) = E(\phi^E_m) + \omega \| \phi^E_m \|_{L^2}^2 = S_\omega(\phi_\omega),
		\end{equation}
		which further implies that $\phi^E_m$ is an action ground state associated with $\omega$. That $\omega = -\mu(\phi^E_m)$ follows from noting that $\phi^E_m$ solves \eqref{model0}.
	\end{proof}

	\begin{remark}
            By \cref{eq:Somega_1,eq:Somega_2}, and the analysis in the proof of \cref{lem2}, we also obtain that $ S_g(\omega):=S_\omega(\phi_\omega) $ is continuous and strictly increasing with respect to $ \omega $ on $ (-\infty, -\omega_0) $. Moreover, the proof of \cref{lem3} implies that
            \begin{equation*}
                \lim_{\omega \rightarrow -\infty}S_g(\omega)=-\infty\quad \mbox{and}\quad  \lim_{\omega\rightarrow -\omega_0^-} S_g(\omega)=0.
            \end{equation*}
            In fact, the asymptotics of the limits of mass $ M_g(\omega)$ and action $S_g(\omega)$ in  $\omega$ can be derived as well. These will be discussed in the next section that covers the rotating case.
	\end{remark}

The established \cref{thm:actiontoenergy,thm:energytoaction} together show the complete equivalence of the action ground state and the energy ground state for the non-rotating defocusing NLS. Consequently, mathematical studies for the ground state, such as its stability as a soliton, can be explored in one of the setups.

\begin{remark}[Comparison with the focusing nonlinearity]
	When there is a focusing nonlinearity in the NLS such as the focusing NLS considered in Refs.~\refcite{Dovetta,LYZ} and the focusing-defocusing NLS considered in Refs.~\refcite{Lewin,Carles,Jeanjean2021}, an interesting observation is that the role of the action ground state and the energy ground state seems to exchange. To be precise, it is proved in Refs.~\refcite{Dovetta,Jeanjean2021} that any energy ground state is also an action ground state. While, whether an action ground state is also an energy ground state is more complicated and is in general false. These should be compared with \cref{thm:actiontoenergy,thm:energytoaction} and \cref{thm:actiontoenergy_new,thm:Non-equivalence} in the next section.
\end{remark}

\section{Rotating case}\label{sec:rot}
We move on to consider the theoretical studies in the rotating case, i.e., $0<\Omega<\Omega_\text{max}$. Similar to the non-rotating case, we first focus on the relationship between the action ground state $\phi_{\Omega, \omega}$ and the energy ground state $\phi^E_m$:
\begin{equation*}
	\phi_{\Omega,\omega} \in \mathcal{A}_{\Omega, \omega} = \argmin_{\phi \in X} S_{\Omega, \omega}(\phi), \quad \phi_{m}^E \in \mathcal{B}_{\Omega, m} = \argmin_{\substack{\phi \in X\\ \|\phi\|_{L^2}^2=m>0}} E(\phi).
\end{equation*}
Then we study the asymptotic properties of the action ground states with respect to the parameters $\omega$ and $\Omega$. In this section, we use $\phi_{\Omega, \omega}$ to denote an arbitrary element in $\mathcal{A}_{\Omega, \omega}$.

\subsection{Relation with energy ground states}
In the case of rotation, there is generally no theoretical guarantee for the uniqueness of the action or energy ground states, which becomes the primary difference and difficulty in this context. 
Here, we choose to follow the work of Ref.~\refcite{Dovetta}, allowing the possibility of multiple non-trivial action ground states of a given $\omega$. Such an argument will lead us to a clear characterization of equivalence/non-equivalence conditions in the end, which will guide the numerical investigations in the next section.

First of all, \cref{thm:actiontoenergy} can also be extended to the rotating case, i.e., an action ground state $\phi_{\Omega,\omega}$ is an energy ground state with mass $m=\|\phi_{\Omega,\omega}\|_{L^2}^2$. {More precisely, we have the following conditional equivalence of the energy ground states and the action ground states.} 
\begin{theorem}[Conditional equivalence]\label{thm:actiontoenergy_new}
    For any $ \omega < - \lambda_0(\Omega) $, an action ground state $ \phi_{\Omega,\omega} $ is also an energy ground state with mass $ m = \| \phi_{\Omega,\omega} \|_{L^2}^2 $, and all the energy ground states $\phi_m^E$ with precisely this mass $ m $ are also action ground states associated with $\omega$. Moreover, we have
    \begin{equation}\label{eq:action_energy_rot}
        \mathcal{A}_{\Omega,\omega} = \bigcup_{m \in \mathcal{N}_{\Omega, \omega}} \mathcal{B}_{\Omega, m},
    \end{equation}
    where $\mathcal{N}_{\Omega,\omega} := \left\{\| \phi \|_{L^2}^2: \phi \in \mathcal{A}_{\Omega,\omega} \right\}$ is the set of mass of action ground states with given $\omega$ and $\Omega$.
\end{theorem}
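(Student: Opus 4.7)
The plan is to mirror the argument of \cref{thm:actiontoenergy} for the first assertion, exploit the simple identity $S_{\Omega,\omega} = E + \omega m$ on the mass shell $\{\|\phi\|_{L^2}^2 = m\}$ for the second assertion, and then assemble the set identity \cref{eq:action_energy_rot} by bookkeeping.

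First, I would establish that $\phi_{\Omega,\omega}$ is an energy ground state with its own mass. Set $m := \|\phi_{\Omega,\omega}\|_{L^2}^2$. By \cref{thm:exist}, $\phi_{\Omega,\omega}$ minimizes $S_{\Omega,\omega}$ over the whole space $X$, so in particular
\[
S_{\Omega,\omega}(\phi_{\Omega,\omega}) \leq S_{\Omega,\omega}(\phi), \qquad \forall\, \phi \in X \text{ with } \|\phi\|_{L^2}^2 = m.
\]
On the level set $\{\|\phi\|_{L^2}^2 = m\}$ one has $S_{\Omega,\omega}(\phi) = E(\phi) + \omega m$, and subtracting the constant $\omega m$ from both sides yields $E(\phi_{\Omega,\omega}) \leq E(\phi)$ for every admissible $\phi$, i.e.\ $\phi_{\Omega,\omega} \in \mathcal{B}_{\Omega,m}$.

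Second, I would show that any $\phi_m^E \in \mathcal{B}_{\Omega,m}$ with $m = \|\phi_{\Omega,\omega}\|_{L^2}^2$ is itself an action ground state. Since $\phi_{\Omega,\omega}$ also has mass $m$, the defining property of $\phi_m^E$ gives $E(\phi_m^E) \leq E(\phi_{\Omega,\omega})$; adding $\omega m$ on both sides converts this into
\[
S_{\Omega,\omega}(\phi_m^E) = E(\phi_m^E) + \omega m \leq E(\phi_{\Omega,\omega}) + \omega m = S_{\Omega,\omega}(\phi_{\Omega,\omega}).
\]
Since $\phi_{\Omega,\omega}$ already realizes the infimum of $S_{\Omega,\omega}$ on $X$ by \cref{thm:exist}, the above inequality must be an equality, and $\phi_m^E \in \mathcal{A}_{\Omega,\omega}$.

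Third, the set identity \cref{eq:action_energy_rot} would follow at once. For the inclusion $\mathcal{A}_{\Omega,\omega} \subseteq \bigcup_{m \in \mathcal{N}_{\Omega,\omega}} \mathcal{B}_{\Omega,m}$, take $\phi \in \mathcal{A}_{\Omega,\omega}$, note that $\|\phi\|_{L^2}^2 \in \mathcal{N}_{\Omega,\omega}$ by the definition of $\mathcal{N}_{\Omega,\omega}$, and apply the first step to conclude $\phi \in \mathcal{B}_{\Omega,\|\phi\|_{L^2}^2}$. Conversely, any element of $\mathcal{B}_{\Omega,m}$ with $m \in \mathcal{N}_{\Omega,\omega}$ lies in $\mathcal{A}_{\Omega,\omega}$ by the second step.

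There is no genuinely hard step here, but one must be careful about the scope of the conclusion: the argument only delivers information about energy ground states whose mass already appears as $\|\phi\|_{L^2}^2$ for some action ground state $\phi \in \mathcal{A}_{\Omega,\omega}$. Whether every energy ground state (at arbitrary mass) arises this way is precisely the missing ingredient for full equivalence in the rotating case, and it is the subject of the non-equivalence discussion that follows in the paper; this is the main conceptual subtlety to flag, even though the proof itself is short.
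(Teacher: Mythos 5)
Your proposal is correct and follows essentially the same route as the paper: both reduce the first assertion to the global minimality of $S_{\Omega,\omega}$ restricted to the mass shell (where $S_{\Omega,\omega}=E+\omega m$), then transfer energy minimality back to action minimality via the same identity, and read off \cref{eq:action_energy_rot}. Your closing remark about the scope of the conclusion matches the paper's subsequent discussion of the set $\mathcal{N}$ and possible non-equivalence.
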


\begin{proof}
    Following the proof of \cref{thm:actiontoenergy}, we have $ \phi_{\Omega,\omega} $ is also an energy ground state with mass $ m = \| \phi_{\Omega,\omega} \|_{L^2}^2 $, which implies $\mathcal{A}_{\Omega,\omega} \subset \bigcup_{m \in \mathcal{N}_{\Omega, \omega}} \mathcal{B}_{\Omega, m}$.  On the other hand, for this $m$, since $\phi_{\Omega, \omega}$ is an energy ground state with mass $m$, we have
    \begin{equation*}
        E(\phi) = E( \phi_{\Omega,\omega} ), \quad \| \phi \|_{L^2}^2 = m = \| \phi_{\Omega,\omega} \|_{L^2}^2, \quad \forall \phi \in \mathcal{B}_{\Omega, m},
    \end{equation*}
    which implies $ S_{\Omega,\omega}(\phi) = S_{\Omega,\omega}( \phi_{\Omega,\omega} ) $ for all $\phi \in \mathcal{B}_{\Omega, m}$. Thus any $\phi \in \mathcal{B}_{\Omega, \omega}$ is also an action ground state and it holds $ \mathcal{B}_{\Omega, m} \subset \mathcal{A}_{\Omega,\omega} $. The equation \cref{eq:action_energy_rot} follows from the above arguments immediately and the proof is completed.
\end{proof}

By \cref{thm:actiontoenergy_new}, if the mass of all action ground states as a set can cover $(0, \infty)$, i.e.,
\begin{equation}\label{eq:mass_all}
	\mathcal{N} :=\bigcup_{\omega \in (-\infty, -\lambda_0(\Omega))} \mathcal{N}_{\Omega, \omega} = (0, \infty),
\end{equation}
then any energy ground state $\phi^E_m$ with mass $m>0$ is also an action ground state with $\omega = - \mu(\phi^E_m)$ with $\mu$ defined in \cref{eq:mu_def}, and we obtain the complete equivalence of the two. If $\mathcal{N} \neq (0, \infty) $, then for any $ m \in (0, \infty) \setminus \mathcal{N}$, the energy ground state with mass $m$ is clearly not an action ground state. {However, due to the lack of uniqueness in the rotating case, we cannot follow the argument in the non-rotating case to prove \cref{eq:mass_all} and obtain the complete equivalence. In fact, it is unclear whether \cref{eq:mass_all} will hold.} In the following, instead of proving or disproving \cref{eq:mass_all}, we shall give a detailed characterization of the structure of the set $\mathcal{N}$, which allows us to derive more precise equivalence and non-equivalence conditions. Notably, such conditions will guide our numerical experiments to find examples of non-equivalence (see \cref{sec:num_non-eq}).

    Similar to the function $M_g$ defined in \eqref{eq:Mg_def}, we now define
    \begin{equation}\label{eq:m_def}
        m^\ast(\omega) := \sup\mathcal{N}_{\Omega,\omega}, \qquad m_\ast(\omega) := \inf\mathcal{N}_{\Omega,\omega}.
    \end{equation}
    As in \cref{sec:3.2}, we shall study the structure of $\mathcal{N}$ through the properties of two mass functions $m^\ast$ and $m_\ast$, including continuity, monotonicity and the limit.
    For ease of presentation, we define another inner product $ \lrang{\cdot}{\cdot}_{R(\kappa)} $ on $ X $ as
    \begin{equation}\label{eq:inner_R_def}
		\lrang{\cdot}{\cdot}_{R(\kappa)} = \lrang{R \cdot}{\cdot} + \kappa \lrang{\cdot}{\cdot}, \qquad
   R= - \frac12\Delta + V - \Omega L_z, 
    \end{equation}
    with $ \kappa > \max\{-\lambda_0(\Omega), 0\} $ a fixed constant. Then, under Assumption \ref{lem:S-welldef}, $ \| \cdot \|_{R(\kappa)} = \sqrt{Q_{\Omega, \kappa}(\cdot)} $ is an equivalent norm on $ X $.

    \begin{lemma}\label{prop:aux_convergence}
		Let $\omega_\ast \in (-\infty, -\lambda_0(\Omega))$. For any $ \omega^k \rightarrow \omega_\ast^+ $ or $ \omega^k \rightarrow \omega_\ast^- $, there exist $ \phi \in \mathcal{A}_{\Omega,\omega_\ast} $ and a subsequence $ \{ \omega^{n_k} \}_k $ such that
        \begin{equation*}
	   	\phi_{\Omega, \omega^{n_k}} \rightharpoonup \phi \text{ in } X,\quad \text{ as } k \rightarrow \infty,
        \end{equation*}
        where $\phi_{\Omega, \omega^{n_k}}$ is the action ground state associated with $\omega^{n_k}$ and $\Omega$.
    \end{lemma}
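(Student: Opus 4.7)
The plan is to combine a uniform $X$-bound on the ground-state sequence, weak compactness, continuity of the value function $\omega \mapsto \inf_X S_{\Omega,\omega}$, and weak lower semicontinuity of $S_{\Omega,\omega_\ast}$ to identify the weak limit as an element of $\mathcal{A}_{\Omega,\omega_\ast}$. The main obstacle will be the weak lower semicontinuity step, since $S_{\Omega,\omega_\ast}$ carries the rotational term $L_\Omega$, which is neither convex nor manifestly nonnegative on $X$.

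\emph{Step 1 (uniform bound and weak compactness).} First I would establish $\|\phi_{\Omega,\omega^k}\|_X \leq C$ for all large $k$, using the translation identity $S_{\Omega,\omega}(\psi) - S_{\Omega,\omega'}(\psi) = (\omega-\omega')\|\psi\|_{L^2}^2$. When $\omega^k \to \omega_\ast^+$, one has $S_{\Omega,\omega_\ast}(\phi_{\Omega,\omega^k}) \leq S_{\Omega,\omega^k}(\phi_{\Omega,\omega^k}) < 0$ since $\omega_\ast < \omega^k$, so \cref{lem:uniform_bound} applied at $\omega_\ast$ delivers the bound. When $\omega^k \to \omega_\ast^-$, I would instead fix any $\omega' < \omega_\ast$ with $\omega' \in (-\infty,-\lambda_0(\Omega))$ and apply \cref{lem:uniform_bound} at $\omega'$, since $\omega^k > \omega'$ eventually. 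Reflexivity of $X$ then allows me to extract a subsequence (still denoted by $\omega^{n_k}$) with $\phi_{\Omega,\omega^{n_k}} \rightharpoonup \phi$ in $X$.

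\emph{Step 2 (convergence of action values).} Fix any $\phi_\ast \in \mathcal{A}_{\Omega,\omega_\ast}$ (which exists by \cref{thm:exist}) and write $s_\ast := S_{\Omega,\omega_\ast}(\phi_\ast) = \inf_X S_{\Omega,\omega_\ast}$. I would then show $S_{\Omega,\omega_\ast}(\phi_{\Omega,\omega^{n_k}}) \to s_\ast$. For $\omega^k > \omega_\ast$, the minimizing properties of $\phi_{\Omega,\omega^k}$ and $\phi_\ast$ together with the translation identity yield the chain $s_\ast \leq S_{\Omega,\omega_\ast}(\phi_{\Omega,\omega^k}) \leq S_{\Omega,\omega^k}(\phi_{\Omega,\omega^k}) \leq S_{\Omega,\omega^k}(\phi_\ast) = s_\ast + (\omega^k - \omega_\ast)\|\phi_\ast\|_{L^2}^2$, and squeezing gives the claim. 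The case $\omega^k \to \omega_\ast^-$ is analogous: bounding $S_{\Omega,\omega_\ast}(\phi_{\Omega,\omega^k})$ from above by $s_\ast + (\omega_\ast-\omega^k)(\|\phi_{\Omega,\omega^k}\|_{L^2}^2 - \|\phi_\ast\|_{L^2}^2)$ and invoking the uniform mass bound from Step 1 to kill the error.

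\emph{Step 3 (weak lower semicontinuity and conclusion).} To pass to the limit I would rewrite
$S_{\Omega,\omega_\ast}(\psi) = \|\psi\|_{R(\kappa)}^2 + (\omega_\ast - \kappa)\|\psi\|_{L^2}^2 + \tfrac{2}{p+1}\|\psi\|_{L^{p+1}}^{p+1}$
using the equivalent Hilbert-space norm $\|\cdot\|_{R(\kappa)}$ from \cref{eq:inner_R_def}. Since $\|\cdot\|_{R(\kappa)}$ is equivalent to $\|\cdot\|_X$, the weak topologies coincide, so $\|\cdot\|_{R(\kappa)}^2$ is automatically weakly lower semicontinuous on $X$; this is precisely what absorbs the troublesome rotational contribution into an object to which standard Hilbert-space theory applies. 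Meanwhile, $\|\phi_{\Omega,\omega^{n_k}}\|_{L^2}^2 \to \|\phi\|_{L^2}^2$ and $\|\phi_{\Omega,\omega^{n_k}}\|_{L^{p+1}}^{p+1} \to \|\phi\|_{L^{p+1}}^{p+1}$ by the compact embedding in \cref{lem:Xembed}. Combined with Step 2 this gives $S_{\Omega,\omega_\ast}(\phi) \leq \liminf_k S_{\Omega,\omega_\ast}(\phi_{\Omega,\omega^{n_k}}) = s_\ast$, so $\phi$ attains the infimum; since $S_{\Omega,\omega_\ast}(0) = 0 > s_\ast$ rules out $\phi = 0$, I conclude $\phi \in \mathcal{A}_{\Omega,\omega_\ast}$.
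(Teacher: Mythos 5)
Your proposal is correct and follows essentially the same route as the paper's proof: a uniform $X$-bound from \cref{lem:uniform_bound} via the sign of the action, the squeeze argument from the monotone chain of action values, and weak lower semicontinuity of $\|\cdot\|_{R(\kappa)}^2$ combined with the compact embedding to identify the weak limit as a minimizer. The only cosmetic difference is that you make the translation identity and the non-vanishing of the limit ($S_{\Omega,\omega_\ast}(0)=0>s_\ast$) explicit, which the paper leaves implicit.
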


    \begin{proof}
		We start with the case $ \omega^k \rightarrow \omega_\ast^+ $. Note that for any $ \omega $ satisfying $ \omega_\ast < \omega < -\lambda_0(\Omega) $, one has, similar to \eqref{eq:Somega_1},
		\begin{equation}\label{eq:relation_right}
			S_{\Omega, \omega_\ast}(\phi_{\Omega, \omega_\ast}) < S_{\Omega, \omega_\ast}(\phi_{\Omega, \omega}) < S_{\Omega, \omega}(\phi_{\Omega, \omega}) < S_{\Omega, \omega}(\phi_{\Omega, \omega_\ast}).
		\end{equation}
		Recalling \eqref{action}, we have
		\begin{equation}
			S_{\Omega, \omega}(\phi_{\Omega, \omega_\ast}) \rightarrow S_{\Omega, \omega_\ast}(\phi_{\Omega, \omega_\ast}) \text{\quad  as \quad} \omega \rightarrow \omega_\ast^+,
		\end{equation}
		which, together with \eqref{eq:relation_right} yields
  \begin{subequations}
		\begin{align}
			&S_{\Omega, \omega_\ast}(\phi_{\Omega, \omega}) \rightarrow S_{\Omega, \omega_\ast}(\phi_{\Omega, \omega_\ast}), \label{eq:S_limit_1}\\
			&S_{\Omega, \omega}(\phi_{\Omega, \omega}) \rightarrow S_{\Omega, \omega_\ast}(\phi_{\Omega, \omega_\ast}) .\label{eq:S_limit_2}
		\end{align}
  \end{subequations}
		By \cref{eq:S_limit_2,eq:K_def}, we have
		\begin{equation*}
			\| \phi_{\Omega, \omega} \|_{L^{p+1}} \rightarrow \| \phi_{\Omega, \omega_\ast} \|_{L^{p+1}}, \text{\quad  as \quad} \omega \rightarrow \omega_\ast^+.
		\end{equation*}
		By \cref{eq:S_limit_1,eq:S_negative}, one has $ S_{\Omega, \omega_\ast}(\phi_{\Omega, \omega}) < 0 $ for $ \omega>\omega_\ast $ sufficiently close to $ \omega_\ast $, which implies, by \cref{lem:uniform_bound}, $ \phi_{\Omega, \omega} $ is uniformly bounded in $ X $ for $ \omega_\ast < \omega < \omega_\ast + \delta < -\lambda_0(\Omega)$ with some $ \delta > 0 $ sufficiently small. Then, for any sequence $ \omega^k \rightarrow \omega_\ast^+ $ satisfying $ \omega_\ast < \omega^{k} < \omega_\ast + \delta < -\lambda_0(\Omega)$, there exists a subsequence (still denoted by $ \{\omega^k\}_k $) and $ \phi \in X $ such that $ \phi_{\Omega, \omega^k} \rightharpoonup \phi$ weakly in $X$. It remains to show that $ \phi \in \mathcal{A}_{\Omega,\omega_\ast} $. By \cref{lem:Xembed}, $ \phi_{\Omega, \omega^k} \rightarrow \phi $ strongly in $ L^2 $ and $ L^{p+1} $. Recalling \cref{eq:inner_R_def}, one has
        \[ S_{\Omega, \omega_\ast}(\phi) = \| \phi \|_{R(\kappa)}^2 - (\kappa-\omega_\ast) \| \phi \|_{L^2}^2 + \frac{2}{p+1} \| \phi \|_{L^{p+1}}^{p+1}. \]
        Using the weak lower-semicontinuity of the norm $\| \cdot \|_{R(\kappa)}$ in $X$, the strong convergence of $\phi_{\Omega, \omega^k}$ in $L^2$ and $L^{p+1}$, and \cref{eq:S_limit_1}, we obtain
		\begin{align*}
			S_{\Omega, \omega_\ast}(\phi) \leq \liminf_{k \rightarrow \infty} S_{\Omega, \omega_\ast}(\phi_{\Omega, \omega^k}) = S_{\Omega, \omega_\ast}(\phi_{\Omega, \omega_\ast}).
		\end{align*}
		Hence, $S_{\Omega, \omega_\ast}(\phi)=S_{\Omega, \omega_\ast}(\phi_{\Omega, \omega_\ast})$ and $ \phi \in \mathcal{A}_{\Omega,\omega_\ast} $.
		
		Then we consider the case $ \omega^k \rightarrow \omega_\ast^{-} $. For any $ \omega $ satisfying $ \omega < \omega_\ast $, one has
		\begin{equation*}
			S_{\Omega, \omega}(\phi_{\Omega, \omega}) < S_{\Omega, \omega}(\phi_{\Omega, \omega_\ast}) < S_{\Omega, \omega_\ast}(\phi_{\Omega, \omega_\ast}) < S_{\Omega, \omega_\ast}(\phi_{\Omega, \omega}).
		\end{equation*}
		We claim that
		\begin{equation}\label{eq:S_limit_prove}
			 S_{\Omega, \omega_\ast}(\phi_{\Omega, \omega}) - S_{\Omega, \omega}(\phi_{\Omega, \omega}) \rightarrow 0 \quad \text{as } \omega \rightarrow \omega_\ast^-.
		\end{equation}
		To show \cref{eq:S_limit_prove}, it suffices to show that $ \| \phi_{\omega} \|_{L^2} $ is uniformly bounded when $\omega \rightarrow \omega_\ast^{-}$. Note that, for any $\delta > 0$,
		\begin{equation*}
			S_{\Omega, \omega_\ast-\delta}(\phi_{\Omega, \omega}) < S_{\Omega, \omega}(\phi_{\Omega, \omega}) < 0, \quad \omega_\ast-\delta < \omega <\omega_\ast,
		\end{equation*}
		which implies by \cref{lem:uniform_bound} that $ \{\phi_{\Omega, \omega}\}_{\omega_{\ast}-\delta < \omega < \omega_\ast} $ is uniformly bounded in $ X $ and thus the claim \cref{eq:S_limit_prove} is proved. 
		The rest of the proof follows a similar procedure as in the case $\omega \rightarrow \omega_\ast^+$ and we shall omit it for brevity.
    \end{proof}

    Now we are able to characterize the mass functions $m^\ast$ and $m_\ast$ as derivatives of the action function with respect to $\omega$.
    That is to define $S^g_{\Omega}: (-\infty, -\lambda_0(\Omega)) \rightarrow \mathbb{R} $ as
    \begin{equation}\label{eq:Sg_Omega_def}
		S^g_{\Omega}(\omega) := S_{\Omega,\omega}(\phi_{\Omega,\omega}), \quad \omega < - \lambda_0(\Omega),
    \end{equation}
    and we have the following result for its relation with the mass.
    \begin{proposition}\label{thm:differentiability of Sg}
        The left and right derivative of $ S^g_{\Omega} $ exist everywhere on $ (- \infty, -\lambda_0(\Omega)) $, and we denote them by
        ${S^g_{\Omega}}'_{-},{S^g_{\Omega}}'_{+}$ respectively. Then, we have
        \begin{equation*}
            {S^g_{\Omega}}'_{-}(\omega) = m^\ast(\omega), \quad {S^g_{\Omega}}'_{+}(\omega) = m_\ast(\omega), \quad \omega < -\lambda_0(\Omega).
		\end{equation*}
    \end{proposition}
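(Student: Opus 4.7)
The plan is to exploit the fact that $S^g_{\Omega}(\omega) = \inf_{\phi \in X} S_{\Omega,\omega}(\phi)$ is the pointwise infimum of a family of affine-in-$\omega$ functions, each with slope $\|\phi\|_{L^2}^2$, so $S^g_{\Omega}$ is concave on $(-\infty,-\lambda_0(\Omega))$. Concavity alone gives the existence of both one-sided derivatives ${S^g_{\Omega}}'_{\pm}(\omega)$ in the interior with ${S^g_{\Omega}}'_{+}(\omega) \leq {S^g_{\Omega}}'_{-}(\omega)$. Moreover, for any $\phi_{\Omega,\omega} \in \mathcal{A}_{\Omega,\omega}$ the affine function $\omega' \mapsto S^g_{\Omega}(\omega) + \|\phi_{\Omega,\omega}\|_{L^2}^2\,(\omega' - \omega)$ majorises $S^g_{\Omega}$ (use $\phi_{\Omega,\omega}$ as a test function for the minimisation at $\omega'$), so $\|\phi_{\Omega,\omega}\|_{L^2}^2$ lies in the superdifferential $[{S^g_{\Omega}}'_{+}(\omega), {S^g_{\Omega}}'_{-}(\omega)]$. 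Taking the infimum and the supremum over $\mathcal{A}_{\Omega,\omega}$ produces the easy half of the claim,
\begin{equation*}
    {S^g_{\Omega}}'_{+}(\omega) \;\leq\; m_\ast(\omega) \;\leq\; m^\ast(\omega) \;\leq\; {S^g_{\Omega}}'_{-}(\omega).
\end{equation*}

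For the matching lower bound ${S^g_{\Omega}}'_{+}(\omega) \geq m_\ast(\omega)$, I would test in the opposite direction: using $\phi_{\Omega,\omega+h} \in \mathcal{A}_{\Omega,\omega+h}$ as a competitor for the minimisation at $\omega$ gives
\begin{equation*}
    S^g_{\Omega}(\omega+h) - S^g_{\Omega}(\omega) \;\geq\; S_{\Omega,\omega+h}(\phi_{\Omega,\omega+h}) - S_{\Omega,\omega}(\phi_{\Omega,\omega+h}) \;=\; h\,\|\phi_{\Omega,\omega+h}\|_{L^2}^2,
\end{equation*}
which for $h > 0$ reads $(S^g_{\Omega}(\omega+h) - S^g_{\Omega}(\omega))/h \geq \|\phi_{\Omega,\omega+h}\|_{L^2}^2$. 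Along a sequence $h_n \to 0^+$ realising $\liminf_{h\to 0^+}\|\phi_{\Omega,\omega+h}\|_{L^2}^2$, \cref{prop:aux_convergence} extracts a subsequence with $\phi_{\Omega,\omega+h_{n_k}} \rightharpoonup \phi \in \mathcal{A}_{\Omega,\omega}$ in $X$, and the compact embedding in \cref{lem:Xembed} upgrades this to strong $L^2$-convergence, so $\|\phi_{\Omega,\omega+h_{n_k}}\|_{L^2}^2 \to \|\phi\|_{L^2}^2 \geq m_\ast(\omega)$. Since the right derivative already exists (by concavity), passing to $\liminf$ yields ${S^g_{\Omega}}'_{+}(\omega) \geq m_\ast(\omega)$, closing the sandwich on the right. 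The argument with $h < 0$ is entirely symmetric: dividing by $h$ flips the inequality to $(S^g_{\Omega}(\omega+h) - S^g_{\Omega}(\omega))/h \leq \|\phi_{\Omega,\omega+h}\|_{L^2}^2$, and a $\limsup$ version of the same subsequence extraction produces $\phi \in \mathcal{A}_{\Omega,\omega}$ with $\|\phi\|_{L^2}^2 \leq m^\ast(\omega)$, giving ${S^g_{\Omega}}'_{-}(\omega) \leq m^\ast(\omega)$ and closing the sandwich on the left.

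The principal obstacle is precisely the non-uniqueness phenomenon that motivates this whole section: $\mathcal{N}_{\Omega,\omega}$ may be a non-degenerate subset of $(0,\infty)$, so $m_\ast(\omega) < m^\ast(\omega)$ is a priori possible and $S^g_{\Omega}$ can have a genuine corner at $\omega$, which is why the statement must keep the two one-sided derivatives separate. The competitor inequalities alone provide only the sandwich displayed above; squeezing all four quantities onto two common endpoints requires a form of continuous dependence of the action ground state on $\omega$, which is exactly the content of \cref{prop:aux_convergence} (combined with the $L^2$-compactness from \cref{lem:Xembed}). No further deep input is needed; the remaining work is the bookkeeping between weak $X$-limits and strong $L^2$-limits on each of the two sides.
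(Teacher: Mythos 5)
Your proposal is correct and follows essentially the same route as the paper: concavity of $S^g_\Omega$ as an infimum of affine functions, the two competitor inequalities (testing $\phi_{\Omega,\omega}$ at $\omega'$ and $\phi_{\Omega,\omega'}$ at $\omega$), and \cref{prop:aux_convergence} together with the compact embedding of \cref{lem:Xembed} to identify the limit of $\|\phi_{\Omega,\omega+h}\|_{L^2}^2$ with an element of $\mathcal{N}_{\Omega,\omega}$. Your superdifferential phrasing of the ``easy half'' is just a repackaging of the paper's difference-quotient estimate, and your use of $\liminf$ where the paper uses $\limsup$ is an immaterial variation.
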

	
    \begin{proof}
		Let $ f_\phi(\omega) = S_{\Omega,\omega}(\phi) $. Then $ f_\phi(\cdot) $ is a linear function on $ (-\infty, -\lambda_0(\Omega)) $ for each given $ \phi \in X $ and thus is concave. Since
		\begin{equation*}
		  S^g_{\Omega}(\omega) = \inf_{\phi \in X} S_{\Omega,\omega}(\phi) = \inf_{\phi \in X} f_\phi(\omega)
		\end{equation*}
		is the infimum of a family of concave functions, $S_\Omega^g$ is also a concave function on $ (-\infty, -\lambda_0(\Omega)) $ and the one-side derivatives exist everywhere.
		
		For any $\omega_\ast < -\lambda_0(\Omega) $, we first show that ${S^g_{\Omega}}'_{+}(\omega_\ast) = m_\ast(\omega_\ast)$. Let $ \phi_{\Omega, \omega_\ast} \in \mathcal{A}_{\Omega, \omega_\ast} $ be an action ground state associated with $\omega_\ast$.
		For any $ \omega $ satisfying $ \omega_\ast < \omega < -\lambda_0(\Omega)$, by \cref{eq:relation_right}, we obtain 
		\begin{align*}
			{S^g_{\Omega}}'_{+}(\omega_\ast)
			&= \lim_{\omega \rightarrow \omega_\ast^+} \frac{S_{\Omega,\omega}(\phi_{\Omega,\omega}) - S_{\Omega,\omega_\ast}(\phi_{\Omega,\omega_\ast})}{\omega - \omega_\ast} \notag \\
			&\leq \lim_{\omega \rightarrow \omega_\ast^+} \frac{S_{\Omega,\omega}(\phi_{\Omega,\omega_\ast}) - S_{\Omega,\omega_\ast}(\phi_{\Omega,\omega_\ast})}{\omega - \omega_\ast} = \| \phi_{\Omega,\omega_\ast} \|_{L^2}^2.
		\end{align*}
		By the arbitrariness of $ \phi_{\Omega,\omega_\ast} \in \mathcal{A}_{\Omega, \omega_\ast} $, one has
		\begin{equation}\label{eq:leq}
			{S^g_{\Omega}}'_{+}(\omega_\ast) \leq \inf_{\phi \in \mathcal{A}_{\Omega, \omega_\ast}} \| \phi \|_{L^2}^2 = m_\ast(\omega_\ast).
		\end{equation}
		To show the converse, we note from \cref{eq:relation_right} that
		\begin{align*}
			{S^g_{\Omega}}'_{+}(\omega_\ast)
			&= \lim_{\omega \rightarrow \omega_\ast^+} \frac{S_{\Omega,\omega}(\phi_{\Omega,\omega}) - S_{\Omega,\omega_\ast}(\phi_{\Omega,\omega_\ast})}{\omega - \omega_\ast} \notag \\
			&\geq \limsup_{\omega \rightarrow \omega_\ast^+} \frac{S_{\Omega,\omega}(\phi_{\Omega,\omega}) - S_{\Omega,\omega_\ast}(\phi_{\Omega,\omega})}{\omega - \omega_\ast} = \limsup_{\omega \rightarrow \omega_\ast^+} \| \phi_{\Omega,\omega} \|_{L^2}^2.
		\end{align*}
		By \cref{prop:aux_convergence,lem:Xembed}, one can find $ \phi \in \mathcal{A}_{\Omega,\omega_\ast} $ and a sequence $ \{\omega^k\}_k \subset (\omega_\ast, -\lambda_0(\Omega)) $ satisfying $ \omega^k \rightarrow \omega_\ast^+ $ such that $\phi_{\Omega, \omega^k} \rightarrow \phi$ in $L^2$. Since  $ \phi \in \mathcal{A}_{\Omega,\omega_\ast} $, $ \| \phi \|_{L^2}^2 \geq m_\ast(\omega_\omega) $, which implies
		\begin{equation}\label{eq:geq}
			{S^g_{\Omega}}'_{+}(\omega_\ast) \geq \limsup_{\omega \rightarrow \omega_\ast^+} \| \phi_{\Omega,\omega} \|_{L^2}^2 \geq \| \phi \|_{L^2}^2 \geq m_\ast(\omega_\ast).
		\end{equation}
		Combining \cref{eq:leq,eq:geq}, we have
		\begin{equation*}
			\| \phi \|_{L^2}^2 = {S^g_{\Omega}}'_{+}(\omega_\ast) = m_\ast(\omega_\ast).
		\end{equation*}
		
		The proof of ${S^g_{\Omega}}'_{-}(\omega) = m^\ast(\omega)$ can be obtained similarly, and we omit the details for brevity.
    \end{proof}

    \begin{remark}\label{rem:limit_mass}
        As a by-product of the proof of \cref{thm:differentiability of Sg}, for $\phi$ in \cref{prop:aux_convergence}, we have $ \| \phi \|_{L^2}^2 = m_\ast(\omega_\ast) $ if $ \omega^k \rightarrow \omega_\ast^+ $ and $ \| \phi \|_{L^2}^2 = m^\ast(\omega_\ast) $ if $ \omega^k \rightarrow \omega_\ast^- $. Consequently,
        \[ m^\ast(\omega) = \sup \mathcal{N}_{\Omega, \omega} \in \mathcal{N}_{\Omega, \omega},\quad  m_\ast(\omega) = \inf \mathcal{N}_{\Omega, \omega} \in \mathcal{N}_{\Omega, \omega},\quad \forall \omega < -\lambda_0(\Omega). \]
    \end{remark}

  	{By \cref{thm:differentiability of Sg}, we can obtain more mathematical properties of $m_\ast$ and $m^\ast$.}
    \begin{corollary}\label{prop:mast}
		For $ m_\ast $ and $ m^\ast $ defined in \cref{eq:m_def}, we have
        \begin{enumerate}[label=(\roman*)]
		  \item $ m_\ast(\omega) $ and $ m^\ast(\omega) $ are strictly decreasing on $ (- \infty, -\lambda_0(\Omega) ) $;
		  \item $ m_\ast(\omega) \leq m^\ast(\omega) $ for any $ \omega \in (-\infty, -\lambda_0(\Omega)) $;
		  \item $ m_\ast(\omega_1) > m^\ast(\omega_2) $ if $ \omega_1 < \omega_2 $.
        \end{enumerate}
    \end{corollary}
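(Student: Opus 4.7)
The plan is to establish the three statements in the logical order (ii), (iii), (i), since (iii) is the substantive inequality and (i) then follows by chaining (ii) and (iii). Part (ii) is immediate from the definitions in \cref{eq:m_def}: since $m_\ast(\omega) = \inf \mathcal{N}_{\Omega,\omega}$ and $m^\ast(\omega) = \sup \mathcal{N}_{\Omega,\omega}$ are the infimum and supremum of the same nonempty set, the claim is built into the notation.

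For part (iii), fix $\omega_1 < \omega_2 < -\lambda_0(\Omega)$ and exploit the simple identity
\[
S_{\Omega,\omega_2}(\phi) = S_{\Omega,\omega_1}(\phi) + (\omega_2 - \omega_1)\|\phi\|_{L^2}^2, \quad \phi \in X,
\]
which comes directly from \cref{action}. Using the optimality of $\phi_{\Omega,\omega_i} \in \mathcal{A}_{\Omega,\omega_i}$ as a competitor in the variational problem for the other frequency, I would derive, for arbitrary $\phi_1 \in \mathcal{A}_{\Omega,\omega_1}$ and $\phi_2 \in \mathcal{A}_{\Omega,\omega_2}$, the sandwich
\[
\|\phi_2\|_{L^2}^2 \;\leq\; \frac{S^g_\Omega(\omega_2) - S^g_\Omega(\omega_1)}{\omega_2 - \omega_1} \;\leq\; \|\phi_1\|_{L^2}^2.
\]
Taking the supremum over $\phi_2 \in \mathcal{A}_{\Omega,\omega_2}$ and the infimum over $\phi_1 \in \mathcal{A}_{\Omega,\omega_1}$ gives the weak inequality $m^\ast(\omega_2) \leq m_\ast(\omega_1)$. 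To upgrade to strict inequality, I argue by contradiction: assume $m^\ast(\omega_2) = m_\ast(\omega_1)$. By \cref{rem:limit_mass} the inf is attained, so pick $\phi_\ast \in \mathcal{A}_{\Omega,\omega_1}$ with $\|\phi_\ast\|_{L^2}^2 = m_\ast(\omega_1)$. Tracing back through the chain, equality forces $S_{\Omega,\omega_2}(\phi_\ast) = S^g_\Omega(\omega_2)$, so $\phi_\ast \in \mathcal{A}_{\Omega,\omega_2}$ as well. Then $\phi_\ast$ satisfies the stationary equation \cref{model0} with both frequencies $\omega_1$ and $\omega_2$; subtracting the two equations yields $(\omega_2 - \omega_1)\phi_\ast = 0$, contradicting $\phi_\ast \neq 0$.

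Part (i) is then a one-line consequence: for $\omega_1 < \omega_2$, combining (iii) with (ii) gives $m_\ast(\omega_1) > m^\ast(\omega_2) \geq m_\ast(\omega_2)$ and $m^\ast(\omega_1) \geq m_\ast(\omega_1) > m^\ast(\omega_2)$, so both functions are strictly decreasing. The main obstacle is promoting the weak inequality $m^\ast(\omega_2) \leq m_\ast(\omega_1)$ into a strict one in (iii): concavity of $S^g_\Omega$ (from \cref{thm:differentiability of Sg}) by itself delivers only the non-strict bound, and the crucial extra ingredient is the fact that every action ground state is a nontrivial solution of the stationary NLS, which rules out any $\phi_\ast$ belonging simultaneously to $\mathcal{A}_{\Omega,\omega_1}$ and $\mathcal{A}_{\Omega,\omega_2}$ for distinct $\omega_1,\omega_2$. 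The attainment guaranteed by \cref{rem:limit_mass} is essential here, since otherwise one could not pass from the infimum/supremum to an actual minimizer on which to run the uniqueness argument for $\omega$.
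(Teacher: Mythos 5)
Your argument is correct, and it reaches the conclusion by a noticeably more self-contained route than the paper. The paper obtains the weak inequalities by first identifying $m^\ast$ and $m_\ast$ as the one-sided derivatives of the concave function $S^g_\Omega$ (\cref{thm:differentiability of Sg}) and quoting the monotonicity of such derivatives, and it then gets strictness by observing that any of the three equalities would force $m^\ast(\omega)=m_\ast(\omega)=m$ on the whole interval $(\omega_1,\omega_2)$, whence by the conditional equivalence \cref{thm:actiontoenergy_new} every $\mathcal{A}_{\Omega,\omega}$ with $\omega$ in that interval would coincide with $\mathcal{B}_{\Omega,m}$, contradicting the pairwise disjointness of the sets $\mathcal{A}_{\Omega,\omega}$. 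You instead rederive the weak inequality $m^\ast(\omega_2)\le m_\ast(\omega_1)$ directly from the competitor sandwich
\[
\|\phi_2\|_{L^2}^2 \;\le\; \frac{S^g_\Omega(\omega_2)-S^g_\Omega(\omega_1)}{\omega_2-\omega_1} \;\le\; \|\phi_1\|_{L^2}^2,
\]
and you get strictness by a local argument at the two endpoints: using the attainment from \cref{rem:limit_mass} you produce a single $\phi_\ast$ that would lie in both $\mathcal{A}_{\Omega,\omega_1}$ and $\mathcal{A}_{\Omega,\omega_2}$, and then subtracting the two Euler--Lagrange equations \cref{model0} forces $\phi_\ast=0$. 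Both proofs ultimately rest on the same fact (no nontrivial function solves the stationary equation for two distinct frequencies, so the sets $\mathcal{A}_{\Omega,\omega}$ are pairwise disjoint), but your version bypasses \cref{thm:actiontoenergy_new} and the interval-constancy step entirely, at the cost of genuinely needing the attainment of $m_\ast(\omega_1)$ (or $m^\ast(\omega_2)$) recorded in \cref{rem:limit_mass}, which you correctly flag as essential. Your derivation of (i) from (ii) and (iii) is also valid and matches the logical structure the paper relies on implicitly.
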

	
    \begin{proof}
		The fact $m_\ast(\omega) \leq m^\ast(\omega)$ follows directly from the definition of the two functions. By \cref{thm:differentiability of Sg}, $ m^\ast $ and $ m_\ast $ are the left and right derivatives of the concave function $S^g_\Omega(\omega)$. Hence, they are decreasing and
        \begin{align}
            &m_\ast(\omega_1) \geq m^\ast(\omega_2), \quad \omega_1 < \omega_2 < - \lambda_0(\Omega). \label{eq:iii}
        \end{align}

        It remains to show that $ m_\ast $ and $ m^\ast $ are strictly decreasing and the equality in \cref{eq:iii} cannot be attained. If $ m_\ast(\omega_1) $ = $ m_\ast(\omega_2) $ or $ m^\ast(\omega_1) $ = $ m^\ast(\omega_2) $ or $ m_\ast(\omega_1) = m^\ast(\omega_2) $ for some $ \omega_1 < \omega_2 $, then there exists a constant $ m>0 $ such that
		\begin{equation*}
			m^\ast(\omega) = m_\ast(\omega) = m, \quad \forall \omega \in (\omega_1, \omega_2).
		\end{equation*}
        By \cref{thm:actiontoenergy_new}, $\phi_{\Omega,\omega} \in \mathcal{B}_{\Omega, m}$ for all $ \omega \in (\omega_1, \omega_2) $, which contradicts \cref{eq:action_energy_rot} since $\{\mathcal{A}_{\Omega, \omega}\}_{\omega<-\lambda_0(\Omega)}$ is certainly pairwise disjoint. Thus, the proof is completed.
    \end{proof}

    By \cref{prop:mast}, $ m_\ast $ and $ m^\ast $ are both monotonically decreasing and thus are continuous except a countable set of points. In fact, $m^\ast(\omega)$ and $m_\ast(\omega)$ share the same discontinuous points. At the continuous point $\omega$ of $m^\ast$ and $m_\ast$, we have $ S^g_{\Omega} $ is differentiable and $\mathcal{N}_{\Omega,\omega}$ is a singleton.

	\begin{proposition}\label{prop:leftrightcontinuous}
		$ m^\ast $ is left continuous and $ m_\ast $ is right continuous, i.e.,
		\begin{equation*}
			\lim_{\omega \rightarrow \omega_\ast^-} m^\ast(\omega) = m^\ast(\omega_\ast), \qquad \lim_{\omega \rightarrow \omega_\ast^+} m_\ast(\omega) = m_\ast(\omega_\ast),\qquad  \omega_\ast \in (-\infty, \lambda_0(\Omega)).
		\end{equation*}
	\end{proposition}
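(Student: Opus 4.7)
The plan is to establish the right continuity of $m_\ast$ at any $\omega_\ast\in(-\infty,-\lambda_0(\Omega))$; the left continuity of $m^\ast$ follows by an entirely symmetric argument. There are two ingredients: the monotonicity of $m_\ast$ from \cref{prop:mast}, which yields the existence of the one-sided limit for free, and the weak-convergence result \cref{prop:aux_convergence} sharpened by \cref{rem:limit_mass}, which identifies that limit.

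By items (i) and (iii) of \cref{prop:mast}, $m_\ast$ is strictly decreasing on $(-\infty,-\lambda_0(\Omega))$ and satisfies $m_\ast(\omega)<m_\ast(\omega_\ast)$ for every $\omega>\omega_\ast$. Hence the monotone limit $L:=\lim_{\omega\to\omega_\ast^+}m_\ast(\omega)$ exists and obeys $L\le m_\ast(\omega_\ast)$; only the reverse inequality requires work.

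To obtain it, fix any sequence $\omega^k\to\omega_\ast^+$. By \cref{rem:limit_mass} one has $m_\ast(\omega^k)\in\mathcal{N}_{\Omega,\omega^k}$, so for each $k$ there exists $\phi^k\in\mathcal{A}_{\Omega,\omega^k}$ with $\|\phi^k\|_{L^2}^2=m_\ast(\omega^k)$. Applying \cref{prop:aux_convergence} to $\{\phi^k\}$ produces a subsequence $\{\omega^{n_k}\}$ and some $\phi\in\mathcal{A}_{\Omega,\omega_\ast}$ with $\phi^{n_k}\rightharpoonup\phi$ in $X$. The compact embedding in \cref{lem:Xembed} upgrades this to $\phi^{n_k}\to\phi$ in $L^2$, so
\[
L=\lim_{k\to\infty}m_\ast(\omega^{n_k})=\lim_{k\to\infty}\|\phi^{n_k}\|_{L^2}^2=\|\phi\|_{L^2}^2\ge m_\ast(\omega_\ast),
\]
where the last inequality is the definition of $m_\ast$ combined with $\phi\in\mathcal{A}_{\Omega,\omega_\ast}$. (In fact \cref{rem:limit_mass} states $\|\phi\|_{L^2}^2=m_\ast(\omega_\ast)$ directly.) This gives $L=m_\ast(\omega_\ast)$ and concludes the right continuity of $m_\ast$.

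The left continuity of $m^\ast$ is proved by the same strategy, now picking $\phi^k\in\mathcal{A}_{\Omega,\omega^k}$ with $\|\phi^k\|_{L^2}^2=m^\ast(\omega^k)$ along $\omega^k\to\omega_\ast^-$ and invoking the $\omega^k\to\omega_\ast^-$ branch of \cref{rem:limit_mass} to identify the $L^2$-limit as $m^\ast(\omega_\ast)$. The only delicate point is that \cref{prop:aux_convergence} only produces a subsequence; monotonicity of $m_\ast$ (respectively $m^\ast$) is precisely what lets us upgrade this subsequential convergence to convergence of the full one-sided limit. Beyond this bookkeeping, no substantial obstacle is anticipated.
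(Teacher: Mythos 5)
Your proof is correct and follows essentially the same route as the paper's: both select ground states realizing $m_\ast(\omega^k)$ (resp.\ $m^\ast(\omega^k)$) via \cref{rem:limit_mass}, extract a weakly convergent subsequence via \cref{prop:aux_convergence}, and identify the $L^2$-limit using the compact embedding of \cref{lem:Xembed} together with \cref{rem:limit_mass}. Your explicit appeal to the monotonicity in \cref{prop:mast} to secure existence of the one-sided limit before identifying it along a subsequence is a minor but welcome tightening of the paper's terser argument, not a different method.
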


	\begin{proof}
            The result is a direct consequence of \cref{prop:aux_convergence,rem:limit_mass}. We briefly present the proof of the left continuity of $ m^\ast $. The right continuity of $ m_\ast $ is similar. By \cref{rem:limit_mass}, for any $\omega < -\lambda_0(\Omega)$, there exists $\phi_{\Omega, \omega} \in \mathcal{A}_{\Omega, \omega}$ such that $ m^\ast(\omega)  = \| \phi_{\Omega, \omega} \|_{L^2}^2 $.
            Then by \cref{prop:aux_convergence,rem:limit_mass} as well as the compact embedding of \cref{lem:Xembed}, there exists a sequence $\omega^k \rightarrow \omega_\ast^-$ such that
            \begin{equation*}
                \lim_{\omega \rightarrow \omega_\ast^-} m^\ast(\omega) = \lim_{k} m^\ast(\omega^k) = \lim_{k} \| \phi_{\Omega, \omega^k} \|_{L^2}^2 = m^\ast(\omega_\ast),
            \end{equation*}
            which yields the left continuity of $\omega^\ast$.
	\end{proof}

	\begin{proposition}[Limits of mass]\label{limit_of_mass}
		$ m^\ast(\omega) \rightarrow 0 $ as $ \omega \rightarrow {-\lambda_0(\Omega)}^{-} $ and $ m_\ast(\omega) \rightarrow \infty $ as $ \omega \rightarrow - \infty $.
	\end{proposition}
 \begin{proof}
The proof is very similar to that for \cref{lem3} and is omitted here for brevity.
 \end{proof}

        Denote $ \mathcal{I}(\omega) := [m_\ast(\omega), m^\ast(\omega)] $ for $\omega < -\lambda_0(\Omega)$. Then, the combination of the above properties of  $m_\ast,m^\ast$ directly leads to the following corollary.
	\begin{corollary}\label{cor:property_I}
             The collection of closed intervals $\{\mathcal{I}(\omega)\}_{\omega<-\lambda_0(\Omega)}$ is pairwise disjoint, i.e., $ \mathcal{I}(\omega_1) \cap \mathcal{I}(\omega_2) = \emptyset $ if $ \omega_1 \neq \omega_2 $. Moreover,
		\begin{equation*}
			\bigcup_{\omega \in (-\infty, -\lambda_0(\Omega))} \mathcal{I}(\omega) = (0,\infty).
		\end{equation*}
	\end{corollary}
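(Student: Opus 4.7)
The plan is to verify the two assertions of \cref{cor:property_I} separately, with both falling out almost mechanically from the monotonicity, one-sided continuity, and limit results already assembled.

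For the pairwise disjointness, I would invoke \cref{prop:mast}(iii) directly: if $\omega_1, \omega_2 \in (-\infty, -\lambda_0(\Omega))$ with $\omega_1 < \omega_2$, then $m_\ast(\omega_1) > m^\ast(\omega_2) \geq m_\ast(\omega_2)$, so the intervals $\mathcal{I}(\omega_1) = [m_\ast(\omega_1), m^\ast(\omega_1)]$ and $\mathcal{I}(\omega_2) = [m_\ast(\omega_2), m^\ast(\omega_2)]$ are completely separated. No further input is needed.

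For the surjectivity onto $(0,\infty)$, I would fix an arbitrary $m > 0$ and introduce
\[ \omega_0 := \sup\bigl\{\omega \in (-\infty, -\lambda_0(\Omega)) : m^\ast(\omega) \geq m\bigr\}. \]
First I would check that $\omega_0$ is an interior point of $(-\infty, -\lambda_0(\Omega))$: the set on the right is nonempty because \cref{limit_of_mass} gives $m^\ast(\omega) \geq m_\ast(\omega) \to \infty$ as $\omega \to -\infty$, and it is bounded strictly above by $-\lambda_0(\Omega)$ because $m^\ast(\omega) \to 0$ as $\omega \to -\lambda_0(\Omega)^-$. Since $m^\ast$ is decreasing by \cref{prop:mast}(i), the set is of the form $(-\infty, \omega_0)$ or $(-\infty, \omega_0]$. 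Next I would use the matched one-sided continuities from \cref{prop:leftrightcontinuous} to trap $m$ inside $\mathcal{I}(\omega_0)$: choosing any sequence $\omega^k \uparrow \omega_0$ with $\omega^k < \omega_0$, each $\omega^k$ lies in the above set, so $m^\ast(\omega^k) \geq m$, and the left-continuity of $m^\ast$ yields $m^\ast(\omega_0) \geq m$. Conversely, for any $\omega > \omega_0$ the definition forces $m^\ast(\omega) < m$ and hence $m_\ast(\omega) \leq m^\ast(\omega) < m$; letting $\omega \to \omega_0^+$ and invoking the right-continuity of $m_\ast$ gives $m_\ast(\omega_0) \leq m$. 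Combining, $m \in [m_\ast(\omega_0), m^\ast(\omega_0)] = \mathcal{I}(\omega_0)$.

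The only delicate point I anticipate is that $\omega_0$ could be a common jump discontinuity of $m^\ast$ and $m_\ast$, so that in principle $m$ could lie in the jump gap and be missed. This is precisely what the pairing in \cref{prop:leftrightcontinuous} is designed to rule out: left-continuity on the side where $m^\ast$ is large and right-continuity on the side where $m_\ast$ is small force $m^\ast(\omega_0) \geq m \geq m_\ast(\omega_0)$ automatically, so the jump is absorbed into the closed interval $\mathcal{I}(\omega_0)$ and no case analysis on continuity versus discontinuity of $\omega_0$ is needed.
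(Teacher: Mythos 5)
Your proof is correct and is exactly the derivation the paper intends (the paper states the corollary follows "directly" from \cref{prop:mast}, \cref{prop:leftrightcontinuous} and \cref{limit_of_mass} without writing out the details, and your supremum argument with the matched one-sided continuities is the natural way to fill them in). One cosmetic remark: your auxiliary symbol $\omega_0$ collides with the paper's reserved notation $\omega_0 = \lambda_0(0)$ from \cref{eq:omega_0_def}, so it should be renamed.
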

With the disjoint closed interval $\mathcal{I}(\omega)$, we can now state the precise mathematical description for the case when energy ground states fail to be action ground states as follows.
        \begin{theorem}[Non-equivalence]\label{thm:Non-equivalence}
            If there exists some $ \tilde{\omega}\in(-\infty,-\lambda_0(\Omega)) $ such that $ \mathcal{I}(\tilde{\omega}) \setminus \mathcal{N}_{\Omega,\tilde{\omega}} \neq \emptyset $, then for any $ m \in \mathcal{I}(\tilde{\omega}) \setminus \mathcal{N}_{\Omega,\tilde{\omega}} $, the energy ground state with mass $ m $ will not be an action ground state for any $ \omega $.
	\end{theorem}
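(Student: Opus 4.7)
The plan is to argue by contradiction, leveraging the pairwise disjointness of the closed intervals $\mathcal{I}(\omega)$ established in \Cref{cor:property_I}. Suppose for contradiction that some $m \in \mathcal{I}(\tilde\omega) \setminus \mathcal{N}_{\Omega,\tilde\omega}$ admits an energy ground state $\phi_m^E \in \mathcal{B}_{\Omega,m}$ which is also an action ground state, say $\phi_m^E \in \mathcal{A}_{\Omega,\omega'}$ for some $\omega' < -\lambda_0(\Omega)$. I would then show quickly that this forces $\omega' = \tilde\omega$ and hence $m \in \mathcal{N}_{\Omega,\tilde\omega}$, which is the desired contradiction.

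First I would observe that from $\phi_m^E \in \mathcal{A}_{\Omega,\omega'}$ together with $\|\phi_m^E\|_{L^2}^2 = m$, the very definition \eqref{eq:m_def} of the mass set gives $m \in \mathcal{N}_{\Omega,\omega'}$. Consequently $m_\ast(\omega') \leq m \leq m^\ast(\omega')$, i.e., $m \in \mathcal{I}(\omega')$. By assumption we also have $m \in \mathcal{I}(\tilde\omega)$, so $m \in \mathcal{I}(\omega') \cap \mathcal{I}(\tilde\omega)$. Applying the pairwise disjointness statement in \Cref{cor:property_I} immediately forces $\omega' = \tilde\omega$.

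Substituting back, one obtains $m \in \mathcal{N}_{\Omega,\tilde\omega}$, contradicting the choice of $m$ in $\mathcal{I}(\tilde\omega) \setminus \mathcal{N}_{\Omega,\tilde\omega}$. Since the reasoning works uniformly for every candidate $\omega'$, no value of $\omega$ can make the energy ground state $\phi_m^E$ an action ground state, completing the proof.

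There is essentially no technical obstacle here: the entire argument is a bookkeeping consequence of \Cref{thm:actiontoenergy_new} (which gives $\mathcal{A}_{\Omega,\omega'} \subset \bigcup_{m \in \mathcal{N}_{\Omega,\omega'}} \mathcal{B}_{\Omega,m}$, hence any action ground state's mass lies in $\mathcal{N}_{\Omega,\omega'}$) and of the disjointness of the intervals $\mathcal{I}(\omega)$. The substantive content — strict monotonicity of $m_\ast$ and $m^\ast$ and the resulting disjointness — has already been done in \Cref{prop:mast,cor:property_I}; the present theorem is merely the clean reformulation in terms of non-equivalence.
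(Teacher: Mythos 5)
Your argument is correct and is exactly the reasoning the paper intends: the theorem is stated immediately after \cref{cor:property_I} with no separate proof, precisely because it follows from the disjointness of the intervals $\mathcal{I}(\omega)$ together with the fact that any action ground state's mass lies in $\mathcal{N}_{\Omega,\omega}\subset\mathcal{I}(\omega)$. The only nitpick is a citation slip: the membership $m\in\mathcal{N}_{\Omega,\omega'}$ comes from the (unnumbered) definition of $\mathcal{N}_{\Omega,\omega}$, not from \cref{eq:m_def}, which defines $m_\ast$ and $m^\ast$.
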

The above non-equivalence result may seem to be a bit `dry' for applications at this moment. However, we will use it to  discover the numerical evidence of the non-equivalence. In fact, we will find out later through numerical investigations that the parameter $\omega$ controls the occurrence of vortices in the ground state solution (see \cref{fig:Omgcp3}).
This is not revealed before in studies on energy ground states, where the quantized vortices are considered to be affected by the rotating velocity $\Omega$ but not chemical potential.
The phase transition in the action ground state when $\omega$ varies will lead to non-empty $ \mathcal{I}(\tilde{\omega}) \setminus \mathcal{N}_{\Omega,\tilde{\omega}}$ described in \cref{thm:Non-equivalence}.

\subsection{Alternative relation}
       One alternative description for the relation between the two ground states is to view the action ground state problem as a kind of dual problem of the energy ground state. Let us explain this in the following.        Consider the problem of the energy ground state:
		\begin{equation}\label{eq:Em_def}
			\mathcal{E}(m) := \min_{\substack{\phi \in X \\ \| \phi \|_{L^2}^2 = m}} E(\phi).
		\end{equation}
		Define the Lagrangian function $ L(\phi, \omega): X \times (-\infty, -\lambda_0(\Omega)) \rightarrow \mathbb{R} $ as
		\begin{equation*}
			L(\phi, \omega) = E(\phi) + \omega \left( \| \phi \|_{L^2}^2 - m \right).
		\end{equation*}
		Then the dual problem reads
			\begin{align}
				\mathcal{E}^\ast(m)
				&:= \sup_{\omega<-\lambda_0(\Omega)} \inf_{\phi \in X} L(\phi, \omega) \nonumber\\
				&= \sup_{\omega<-\lambda_0(\Omega)} \inf_{\phi \in X} \left[ S_{\Omega,\omega}(\phi) - \omega m \right] = \sup_{\omega<-\lambda_0(\Omega)} \left[ S^g_{\Omega}(\omega) - \omega m \right]. \label{dual problem}
			\end{align}
		Recalling \cref{eq:Em_def,eq:Sg_Omega_def}, we have
		\begin{align*}
			\mathcal{E}(m) &= \min_{\substack{\phi \in X \\ \| \phi \|_{L^2}^2 = m}} E(\phi) = \min_{\substack{\phi \in X \\ \| \phi \|_{L^2}^2 = m}}  \left[ S_{\Omega,\omega}(\phi) - \omega m \right]\\
   &\geq \min_{\substack{\phi \in X \\ \| \phi \|_{L^2}^2 = m}}  \left[ S^g_{\Omega}(\omega) - \omega m \right] = S^g_{\Omega}(\omega) - \omega m.
		\end{align*}
		It follows immediately that
		\begin{equation*}
			\mathcal{E}^\ast(m) \leq \mathcal{E}(m).
		\end{equation*}

        By \cref{cor:property_I}, for any $m>0$, there exists a unique $\omega \in (-\infty, -\lambda_0(\Omega))$ such that $m \in \mathcal{I}(\omega)$.

	\begin{theorem}\label{thm:dual}
            For $m \in \mathcal{I}(\omega)$, if $ m \in \mathcal{N}_{\Omega,\omega} $, then the dual problem is equivalent to the original problem in the sense that
		  \begin{equation*}
			\mathcal{E}^\ast(m) = \mathcal{E}(m).
		  \end{equation*}
	\end{theorem}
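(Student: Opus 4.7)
The plan is to prove the equality by combining the weak duality $\mathcal{E}^\ast(m) \leq \mathcal{E}(m)$ already observed in the excerpt with a matching lower bound $\mathcal{E}^\ast(m) \geq \mathcal{E}(m)$ obtained by choosing the particular $\omega$ at which the sup is attained. The hypothesis $m \in \mathcal{N}_{\Omega,\omega}$ is the crucial ingredient that produces a single witness realizing the duality gap-closure.

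First I would unpack the hypothesis: since $m \in \mathcal{N}_{\Omega,\omega}$, there exists an action ground state $\phi_{\Omega,\omega} \in \mathcal{A}_{\Omega,\omega}$ with $\|\phi_{\Omega,\omega}\|_{L^2}^2 = m$. By \cref{thm:actiontoenergy_new}, this $\phi_{\Omega,\omega}$ is also an energy ground state with mass $m$, so
\begin{equation*}
E(\phi_{\Omega,\omega}) = \mathcal{E}(m).
\end{equation*}
Next, using the definition \cref{eq:Sg_Omega_def} of $S^g_\Omega$ together with $S_{\Omega,\omega}(\phi) = E(\phi) + \omega \|\phi\|_{L^2}^2$, I would compute
\begin{equation*}
S^g_\Omega(\omega) - \omega m = S_{\Omega,\omega}(\phi_{\Omega,\omega}) - \omega m = E(\phi_{\Omega,\omega}) + \omega m - \omega m = \mathcal{E}(m).
\end{equation*}
Since this $\omega$ lies in $(-\infty, -\lambda_0(\Omega))$, it is a feasible argument in the supremum defining $\mathcal{E}^\ast(m)$ in \cref{dual problem}, which gives
\begin{equation*}
\mathcal{E}^\ast(m) \geq S^g_\Omega(\omega) - \omega m = \mathcal{E}(m).
\end{equation*}

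Finally, I would invoke the weak duality $\mathcal{E}^\ast(m) \leq \mathcal{E}(m)$ already established just before the statement to conclude $\mathcal{E}^\ast(m) = \mathcal{E}(m)$. There is essentially no obstacle: the entire argument reduces to observing that \cref{thm:actiontoenergy_new} supplies a single $\omega$ at which the Lagrangian dual value equals the primal value. The only subtlety worth a sentence is noting that the $\omega$ coming from $m \in \mathcal{I}(\omega) \cap \mathcal{N}_{\Omega,\omega}$ is exactly the one to plug into the supremum — this is legitimate because \cref{cor:property_I} guarantees the uniqueness of $\omega$ with $m \in \mathcal{I}(\omega)$, so there is no ambiguity in the statement of the theorem.
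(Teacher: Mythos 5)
Your proof is correct and follows essentially the same route as the paper: both extract an action ground state of mass $m$ from the hypothesis $m \in \mathcal{N}_{\Omega,\omega}$, invoke \cref{thm:actiontoenergy_new} to identify it as an energy ground state, and evaluate $S^g_\Omega(\omega) - \omega m = E(\phi_{\Omega,\omega}) = \mathcal{E}(m)$. The only (harmless) difference is that the paper locates the maximizer of the dual via concavity and the subdifferential identity $m \in \mathcal{I}(\omega) = \partial S^g_\Omega(\omega)$, whereas you reach the same conclusion slightly more economically from feasibility of this $\omega$ in the supremum together with the weak duality $\mathcal{E}^\ast(m) \leq \mathcal{E}(m)$ established just before the statement.
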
	
	\begin{proof}
            Note that $ S^g_{\Omega}(\omega) - \omega m $ is a concave function of $ \omega $. Then, the supremum in \cref{dual problem} is obtained at $ \omega $ since $ m \in \mathcal{I}(\omega) = \partial S^g_{\Omega}(\omega) $, where $ \partial $ is the subdifferential operator. Since $ m \in \mathcal{N}_{\Omega,\omega} $, there exists $\phi_{\Omega, \omega} \in \mathcal{A}_{\Omega, \omega}$ satisfying $\| \phi_{\Omega, \omega} \|_{L^2}^2 = m$ which, by \cref{thm:actiontoenergy_new}, is also an energy ground state with mass $m$. Then
        \begin{equation}
            \mathcal{E}^\ast(m) = S^g_{\Omega}(\omega) - \omega m = E(\phi_{\Omega, \omega}) = \mathcal{E}(m),
        \end{equation}
        which completes the proof.
	\end{proof}

The rest of the section will be devoted to analyzing the action ground state problem in some limiting regimes of the parameters.

\subsection{Convergence of \texorpdfstring{$ \Omega \rightarrow 0 $}{Omega goes to 0}}
 In this subsection, we consider the case when the rotating speed $\Omega$ goes to zero.  Recall that $ \lambda_0(\Omega) \leq \omega_0 $ and $\lambda_0(\Omega) \rightarrow \omega_0 $ as $ \Omega \rightarrow 0$. We can provide the following rigorous convergence result.

	\begin{theorem}\label{thm:defocusing}
            Assume $ \omega < - \omega_0 $, then $ |\phi_{\Omega,\omega}| \rightarrow \phi_\omega $ in $ X $ as $ \Omega \rightarrow 0 $, where $ \phi_\omega $ is the unique nonnegative action ground state associated with $\omega$ when $ \Omega = 0 $.
	\end{theorem}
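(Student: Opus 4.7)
The plan is to take an arbitrary sequence $\Omega_k \to 0$, set $\phi_k := \phi_{\Omega_k,\omega} \in \mathcal{A}_{\Omega_k,\omega}$, and show that $|\phi_k| \to \phi_\omega$ in $X$ along any subsequence; uniqueness of the subsequential limit then promotes this to convergence of the full family. First, since $\phi_\omega$ is real-valued we have $L_{\Omega_k}(\phi_\omega) = 0$, so testing the action ground state inequality against $\phi_\omega$ gives
\[
    S_{\Omega_k,\omega}(\phi_k) \leq S_{\Omega_k,\omega}(\phi_\omega) = S_\omega(\phi_\omega) < 0.
\]
Choosing $\delta$ in \cref{eq:rotenergy} uniformly for $|\Omega_k|$ in a compact subset of $[0, \Omega_{\max})$, the argument behind \cref{lem:uniform_bound} yields $\sup_k \|\phi_k\|_X < \infty$. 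Combined with \cref{lem:control_rotation} this also gives $|L_{\Omega_k}(\phi_k)| \lesssim |\Omega_k|\, \|\phi_k\|_X^2 \to 0$.

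Next, set $\psi_k := |\phi_k|$. The gradient inequality \cref{eq:gradient} yields $\|\psi_k\|_X \leq \|\phi_k\|_X$, so along a subsequence $\psi_k \rightharpoonup \psi$ weakly in $X$ and, by \cref{lem:Xembed}, strongly in $L^2 \cap L^{p+1}$; in particular $\psi \geq 0$. The key sandwich is
\[
    S_\omega(\psi_k) \leq S_\omega(\phi_k) = S_{\Omega_k,\omega}(\phi_k) - L_{\Omega_k}(\phi_k) \leq S_\omega(\phi_\omega) - L_{\Omega_k}(\phi_k),
\]
where the first inequality comes from applying \cref{eq:gradient} inside $S_\omega$ (which lacks the rotational term). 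Taking $\limsup_k$ yields $\limsup_k S_\omega(\psi_k) \leq S_\omega(\phi_\omega)$; weak lower-semicontinuity of the quadratic part of $S_\omega$ together with the strong $L^2$, $L^{p+1}$ convergence then forces $S_\omega(\psi) = S_\omega(\phi_\omega)$. Hence $\psi$ is a nonnegative action ground state of the non-rotating problem, and \cref{lem:uniqueness} gives $\psi = \phi_\omega$.

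To upgrade to strong convergence in $X$, observe that the sandwich above in fact pins down $\lim_k S_\omega(\psi_k) = S_\omega(\phi_\omega)$; combined with the strong convergence in $L^2$ and $L^{p+1}$, this yields $\|\psi_k\|_{R(\kappa)}^2 \to \|\phi_\omega\|_{R(\kappa)}^2$ in the equivalent Hilbert norm from \cref{eq:inner_R_def} taken with $\Omega = 0$. Weak convergence plus norm convergence in this Hilbert structure gives $\psi_k \to \phi_\omega$ strongly in $X$. The main obstacle is the identification step: it hinges on the diamagnetic-type inequality $S_\omega(|\phi|) \leq S_\omega(\phi)$ (which explains why absolute values are essential, since $\phi_k$ itself may carry a nontrivial phase from vortices), on the quantitative decay $L_{\Omega_k}(\phi_k) = O(|\Omega_k|)$, and on the $\Omega_k$-uniform $X$-bound of $\phi_k$; this last point is the only place one must re-examine \cref{lem:uniform_bound}, and it is automatic as soon as the constants in \cref{eq:rotenergy} and \cref{lem:control_rotation} are chosen uniformly for $|\Omega_k|$ in a compact subset of $[0, \Omega_{\max})$.
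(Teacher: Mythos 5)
Your proof is correct, and while its skeleton (uniform $X$-bound from $S_{\Omega,\omega}(\phi_{\Omega,\omega})\leq S_{\Omega,\omega}(\phi_\omega)=S_\omega(\phi_\omega)<0$, extraction of a weak limit of $|\phi_{\Omega_k,\omega}|$, identification of that limit via \cref{lem:uniqueness}, and an upgrade to strong convergence through norm convergence in the equivalent Hilbert norm) coincides with the paper's, your identification step takes a genuinely different and more economical route. The paper goes through the Nehari functional: it first proves $\|\phi_{\Omega,\omega}\|_{L^{p+1}}\to\|\phi_\omega\|_{L^{p+1}}$ by rescaling $\phi_{\Omega,\omega}$ back onto the Nehari manifold of the non-rotating problem (part (iii) of \cref{lem:defocusingOmega0}, via the factor $\lambda$ in \cref{eq:lambda}), and then invokes the auxiliary criterion of \cref{lem:aux defocusing} (equal $L^{p+1}$ norm plus $K_{\omega}\leq 0$ implies membership in $\mathcal{A}_{\omega}$) to conclude that the weak limit is a ground state. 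You instead use the unconstrained minimization characterization of \cref{thm:exist} directly: the sandwich $S_\omega(|\phi_k|)\leq S_{\Omega_k,\omega}(\phi_k)-L_{\Omega_k}(\phi_k)\leq S_\omega(\phi_\omega)-L_{\Omega_k}(\phi_k)$, combined with weak lower semicontinuity and the fact that $S_\omega(\phi_\omega)=\inf_X S_\omega$, pins the limit's action at the global minimum and identifies it at once, bypassing both auxiliary lemmas; as a bonus the same sandwich delivers $\lim_k S_\omega(|\phi_k|)=S_\omega(\phi_\omega)$, which is precisely the input needed for the norm-convergence step. What the paper's longer route buys is slightly more information: parts (ii) and (iii) of \cref{lem:defocusingOmega0} yield convergence of $\|\phi_{\Omega,\omega}\|_X$ itself, not only of $\||\phi_{\Omega,\omega}|\|_X$, though this extra fact is not required by the statement as written. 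Your explicit observation that the constant in \cref{lem:uniform_bound} must be chosen uniformly for $\Omega$ in a compact subset of $[0,\Omega_{\max})$ addresses a point the paper passes over silently.
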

	
	The proof will be given with the help of the following lemmas. We recall the simplified notations \cref{eq:simplified_not} to be used here. 
        \begin{lemma}\label{lem:defocusingOmega0}
		When $ \omega < - \omega_0 $, we have
		\begin{enumerate}[label=(\roman*)]
			\item for $ \delta>0 $ sufficiently small, $ \{\phi_{\Omega,\omega}\}_{0 \leq \Omega < \delta} $ is bounded in $ X $;
			\item $ \lim_{\Omega \rightarrow 0} K_{\omega}(\phi_{\Omega,\omega}) = 0 $;
			\item $ \lim_{\Omega \rightarrow 0} \| \phi_{\Omega,\omega} \|_{L^{p+1}} = \| \phi_\omega \|_{L^{p+1}} $.
		\end{enumerate}
	\end{lemma}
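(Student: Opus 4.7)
The plan is to handle (i)--(iii) sequentially: (i) provides the compactness that underpins (ii) and (iii), and extracting a uniform-in-$\Omega$ bound is the main technical obstacle.

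For (i), I aim to sandwich $S_{\Omega,\omega}(\phi_{\Omega,\omega})$ between $\Omega$-uniform bounds. The upper bound is immediate: because $\phi_\omega$ is real-valued, $L_\Omega(\phi_\omega)=0$, so
\[
  S_{\Omega,\omega}(\phi_{\Omega,\omega})\le S_{\Omega,\omega}(\phi_\omega)=S_\omega(\phi_\omega)<0.
\]
The harder task is an $\Omega$-uniform coercivity estimate of the form $S_{\Omega,\omega}(\phi)\ge c\|\phi\|_X^2-C$ for all $\phi\in X$ and $\Omega\in[0,\delta)$. I apply \cref{eq:rotenergy} with a fixed small parameter to split $|L_\Omega(\phi)|$ into a small fraction of $\|\nabla\phi\|_{L^2}^2$ plus a multiple of $\int(x_1^2+x_2^2)|\phi|^2$; the latter is absorbed by a small fraction of $\int V|\phi|^2$ (outside a large ball) using the confining estimate developed right after \cref{lem:S-welldef}, once $\Omega$ is small enough relative to $\Omega_{\max}$. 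The residual $\|\phi\|_{L^2}^2$ term is controlled by \cref{controlL2byLp} as $\varepsilon\|\phi\|_X^2+C\|\phi\|_{L^{p+1}}^2$, and $\|\phi\|_{L^{p+1}}^2$ is in turn dominated by $\tfrac{2}{p+1}\|\phi\|_{L^{p+1}}^{p+1}+C'$ since $p+1>2$. Combining with the upper bound yields (i).

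For (ii), I use that $\phi_{\Omega,\omega}$ lies on the Nehari manifold (as recalled above \cref{thm:exist}), hence $K_{\Omega,\omega}(\phi_{\Omega,\omega})=0$. By the definitions \cref{eq:K_def} and \cref{action},
\[
  K_\omega(\phi_{\Omega,\omega})=K_{\Omega,\omega}(\phi_{\Omega,\omega})-L_\Omega(\phi_{\Omega,\omega})=-L_\Omega(\phi_{\Omega,\omega}).
\]
A direct Cauchy--Schwarz bound $|L_\Omega(\phi)|\le\Omega\bigl(\int(x_1^2+x_2^2)|\phi|^2\bigr)^{1/2}\bigl(2\int|\nabla\phi|^2\bigr)^{1/2}\lesssim \Omega\,\|\phi\|_X^2$ (with constant depending only on $V$ via the confining estimate) together with (i) yields $K_\omega(\phi_{\Omega,\omega})\to 0$ as $\Omega\to 0$.

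For (iii), fix an arbitrary sequence $\Omega_k\to 0$. By (i) and \cref{lem:Xembed}, a subsequence (still denoted $\{\Omega_k\}$) satisfies $\phi_{\Omega_k,\omega}\rightharpoonup\phi^\ast$ in $X$ and strongly in $L^2$ and $L^{p+1}$. Combining $L_{\Omega_k}(\phi_{\Omega_k,\omega})\to 0$ from the estimate in (ii) with weak lower semicontinuity of $\|\nabla\cdot\|_{L^2}^2$ and $\int V|\cdot|^2$ and strong convergence of the $L^2,L^{p+1}$ terms, I obtain
\[
  S_\omega(\phi^\ast)\le \liminf_k S_\omega(\phi_{\Omega_k,\omega}) = \liminf_k S_{\Omega_k,\omega}(\phi_{\Omega_k,\omega})\le S_\omega(\phi_\omega),
\]
where the last step uses $S_{\Omega_k,\omega}(\phi_{\Omega_k,\omega})\le S_{\Omega_k,\omega}(\phi_\omega)=S_\omega(\phi_\omega)$. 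Since $S_\omega(\phi_\omega)<0=S_\omega(0)$, we have $\phi^\ast\not\equiv 0$ and $\phi^\ast\in\mathcal{A}_\omega$; by \cref{lem:uniqueness}, $|\phi^\ast|=\phi_\omega$ a.e., whence $\|\phi^\ast\|_{L^{p+1}}=\|\phi_\omega\|_{L^{p+1}}$. Strong $L^{p+1}$-convergence gives $\|\phi_{\Omega_k,\omega}\|_{L^{p+1}}\to \|\phi_\omega\|_{L^{p+1}}$, and since every sequence $\Omega_k\to 0$ admits such a subsequence converging to the same limit, (iii) follows.
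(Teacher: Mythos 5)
Your proof is correct, but it diverges from the paper's in two places worth noting. For (i), the paper simply combines the comparison $S_{\Omega,\omega}(\phi_{\Omega,\omega})\le S_{\Omega,\omega}(\phi_\omega)=S_\omega(\phi_\omega)<0$ with the a priori bound \cref{lem:uniform_bound}; since the constant there formally depends on $\Omega$, your explicit $\Omega$-uniform coercivity estimate (absorbing $L_\Omega$ into the gradient and potential terms and handling $\omega\|\phi\|_{L^2}^2$ via \cref{controlL2byLp} and Young's inequality) is a more careful, self-contained version of the same idea and fills in the uniformity the paper leaves implicit. Part (ii) is identical to the paper's (the paper cites \cref{lem:control_rotation} where you redo the Cauchy--Schwarz bound). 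The genuine difference is (iii): the paper stays entirely on the Nehari manifold, deducing $\|\phi_{\Omega,\omega}\|_{L^{p+1}}^{p+1}\ge\|\phi_\omega\|_{L^{p+1}}^{p+1}$ from $S_{\Omega,\omega}(\phi_{\Omega,\omega})=-\tfrac{p-1}{p+1}\|\phi_{\Omega,\omega}\|_{L^{p+1}}^{p+1}$, and then rescaling $\phi_{\Omega,\omega}$ by the factor $\lambda$ in \cref{eq:lambda} (which tends to $1$ by (ii)) to get the reverse inequality up to $\lambda^{p+1}$ --- a quantitative sandwich that uses neither compactness nor uniqueness. You instead run a concentration-compactness-style argument: extract a weak limit via \cref{lem:Xembed}, identify it as an element of $\mathcal{A}_\omega$ by lower semicontinuity, and invoke the uniqueness result \cref{lem:uniqueness} to pin down its $L^{p+1}$ norm. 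Your route is valid but leans on heavier machinery and effectively front-loads a large part of the proof of \cref{thm:defocusing} into the lemma; the paper's scaling argument keeps (iii) elementary and independent of \cref{lem:uniqueness}, which is only brought in later for the theorem itself.
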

	
	\begin{proof}
            By \cref{eq:S_negative}, noting that $ L_{\Omega}(\phi_\omega) = 0 $ since $\phi_\omega$ is real-valued, we have
            \begin{equation}\label{eq:S_relation_rotation}
                S_{\Omega, \omega}(\phi_{\Omega, \omega}) \leq S_{\Omega, \omega}(\phi_\omega) = S_\omega(\phi_\omega)<0,
            \end{equation}
            which proves (i) by \cref{lem:uniform_bound}.

            To show (ii), recalling that
            \begin{equation*}
                K_{\omega}(\phi_{\Omega,\omega}) = K_{\Omega, \omega}(\phi_{\Omega,\omega}) - L_\Omega(\phi_{\Omega,\omega}) = -L_\Omega(\phi_{\Omega,\omega}),
            \end{equation*}
            it suffices to show that
            \begin{equation}\label{eq:LOmega}
                \lim_{\Omega \rightarrow 0} L_\Omega(\phi_{\Omega,\omega}) = 0,
            \end{equation}
            which is a direct consequence of (i) and \cref{lem:control_rotation}.

            Then we prove (iii). By \cref{eq:S_relation_rotation}, recalling \cref{eq:K_def}, we have
            \begin{equation}\label{leq1de}
			\| \phi_{\Omega,\omega} \|_{L^{p+1}}^{p+1} \geq \| \phi_\omega \|_{L^{p+1}}^{p+1}.
		\end{equation}
            By \cref{eq:K_def,eq:omega_0_def}, we have
            \begin{equation*}
                0 = K_{\Omega, \omega} (\phi_{\Omega, \omega}) = Q_{\omega} (\phi_{\Omega, \omega}) + L_\Omega(\phi_{\Omega, \omega}) + \| \phi_{\Omega, \omega} \|_{L^{p+1}}^{p+1},
            \end{equation*}
            which implies by \cref{leq1de}
            \begin{equation*}
                Q_{\omega} (\phi_{\Omega, \omega}) = -L_\Omega(\phi_{\Omega, \omega}) - \| \phi_{\Omega, \omega} \|_{L^{p+1}}^{p+1} \leq -L_\Omega(\phi_{\Omega, \omega}) - \| \phi_{\omega} \|_{L^{p+1}}^{p+1}.
            \end{equation*}
            Recalling \cref{eq:LOmega}, we get $Q_{\omega} (\phi_{\Omega, \omega}) < 0$ when $0<\Omega<\delta'$ for some $\delta'$ small enough. It follows that, when $0<\Omega<\delta'$, there exists
		\begin{equation}\label{eq:lambda}
			\lambda = \left(\frac{-Q_\omega(\phi_{\Omega,\omega})}{\| \phi_{\Omega,\omega} \|_{L^{p+1}}^{p+1}} \right)^{\frac{1}{p-1}} = \left( 1 - \frac{K_{\omega}(\phi_{\Omega,\omega})}{\| \phi_{\Omega,\omega} \|_{L^{p+1}}^{p+1}} \right)^{\frac{1}{p-1}} 
		\end{equation}
		such that $K_\omega(\lambda \phi_{\Omega, \omega}) = 0$. Then we have                  $S_\omega(\phi_\omega) \leq S_\omega(\lambda \phi_{\Omega, \omega})$, which further implies, by recalling \cref{eq:K_def},
            \begin{equation}\label{leq2de}
			\| \phi_\omega \|_{L^{p+1}}^{p+1} \geq \| \lambda \phi_{\Omega,\omega} \|_{L^{p+1}}^{p+1}.
		\end{equation}
		Combine \cref{leq1de,leq2de} yields
		\begin{equation}\label{leqleqde}
			\| \phi_\omega \|_{L^{p+1}}^{p+1} \leq \| \phi_{\Omega,\omega} \|_{L^{p+1}}^{p+1} \leq \frac{\| \phi_\omega \|_{L^{p+1}}^{p+1}}{\lambda^{p+1}}.
		\end{equation}
		From \cref{eq:lambda}, recalling $K_{\omega}(\phi_{\Omega,\omega})\to 0$ as $\Omega          \rightarrow 0$ established in (ii), we have $ \lambda \rightarrow 1 $ as $ \Omega \rightarrow 0 $, which           proves (iii) by \cref{leqleqde}.
	\end{proof}

\begin{lemma}\label{lem:aux defocusing}
    For any $ \phi \in X $, if $ \| \phi \|_{L^{p+1}}^{p+1} = \| \phi_{\Omega, \omega} \|_{L^{p+1}}^{p+1} $ and $ K_{\Omega,\omega}(\phi) \leq 0 $, then $ \phi \in \mathcal{A}_{\Omega, \omega} $.
\end{lemma}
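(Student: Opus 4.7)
The proof plan is short and rests on the unconstrained-minimization characterization recalled in \cref{thm:exist}, together with the algebraic identity between $S_{\Omega,\omega}$ and $K_{\Omega,\omega}$ in \cref{eq:K_def}. The strategy is to show $S_{\Omega,\omega}(\phi) \leq S_{\Omega,\omega}(\phi_{\Omega,\omega})$; since $\phi_{\Omega,\omega}$ is a global minimizer of $S_{\Omega,\omega}$ on $X$, this forces equality, and then $\phi$ itself is a global minimizer, hence an element of $\mathcal{A}_{\Omega,\omega}$.

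First, I would evaluate $S_{\Omega,\omega}$ at each of the two functions via
\[ S_{\Omega,\omega}(\psi) = K_{\Omega,\omega}(\psi) - \frac{p-1}{p+1}\|\psi\|_{L^{p+1}}^{p+1}, \qquad \psi \in X. \]
Since $\phi_{\Omega,\omega}$ is an action ground state it lies on the Nehari manifold $\mathcal{M}$, so $K_{\Omega,\omega}(\phi_{\Omega,\omega}) = 0$ and consequently $S_{\Omega,\omega}(\phi_{\Omega,\omega}) = -\tfrac{p-1}{p+1}\|\phi_{\Omega,\omega}\|_{L^{p+1}}^{p+1}$. For $\psi = \phi$, the hypothesis $K_{\Omega,\omega}(\phi) \leq 0$ gives $S_{\Omega,\omega}(\phi) \leq -\tfrac{p-1}{p+1}\|\phi\|_{L^{p+1}}^{p+1}$. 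Inserting the hypothesis $\|\phi\|_{L^{p+1}}^{p+1} = \|\phi_{\Omega,\omega}\|_{L^{p+1}}^{p+1}$ into this inequality yields $S_{\Omega,\omega}(\phi) \leq S_{\Omega,\omega}(\phi_{\Omega,\omega})$.

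Second, the reverse inequality is immediate from \cref{eq:S_negative} in \cref{thm:exist}: since $\phi \in X$,
\[ S_{\Omega,\omega}(\phi) \geq \inf_{\psi \in X} S_{\Omega,\omega}(\psi) = S_{\Omega,\omega}(\phi_{\Omega,\omega}). \]
Hence equality holds, $\phi$ is itself a global minimizer of $S_{\Omega,\omega}$ on $X$, and by the unconstrained variational characterization this means $\phi \in \mathcal{A}_{\Omega,\omega}$.

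There is no real obstacle here — the argument is a two-line manipulation once the unconstrained characterization of action ground states is in hand. The interest of the lemma lies entirely in what it buys downstream: it reduces membership in $\mathcal{A}_{\Omega,\omega}$ to two scalar conditions (equality of $L^{p+1}$-norms with a reference minimizer, plus a sign on the Nehari functional), which is exactly the information produced by \cref{lem:defocusingOmega0} for subsequential limits of $|\phi_{\Omega,\omega}|$ as $\Omega \to 0$. Thus the lemma is the tailor-made criterion that identifies the limit as an action ground state in the proof of \cref{thm:defocusing}.
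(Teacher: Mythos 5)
Your proof is correct and takes essentially the same route as the paper's: both arguments rest on the identity $S_{\Omega,\omega}(\cdot) = K_{\Omega,\omega}(\cdot) - \tfrac{p-1}{p+1}\|\cdot\|_{L^{p+1}}^{p+1}$ from \cref{eq:K_def} together with the global minimality of $\phi_{\Omega,\omega}$ from \cref{thm:exist}, squeezing $S_{\Omega,\omega}(\phi)$ between $S_{\Omega,\omega}(\phi_{\Omega,\omega})$ on both sides. The only difference is cosmetic: the paper first rescales $\phi$ by a factor $\lambda\ge 1$ onto the Nehari manifold and deduces $\lambda=1$ from the resulting chain of inequalities, whereas you use the sign of $K_{\Omega,\omega}(\phi)$ directly, which is marginally more economical and equally yields $K_{\Omega,\omega}(\phi)=0$ a posteriori since every action ground state lies on $\mathcal{M}$.
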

\begin{proof}
    Since $K_{\Omega,\omega}(\phi)\leq0$, recalling \cref{eq:K_def}, there exists $$ \lambda = \left( -Q_{\Omega, \omega}(\phi)/\| \phi \|_{L^{p+1}}^{p+1} \right)^{1/(p-1)} \geq 1 $$ such that $ K_{\Omega,\omega}(\lambda \phi) = 0 $. Since $\phi_{\Omega, \omega}$ minimizes $S_{\Omega, \omega}$ on $X$, we have, by $K_{\Omega,\omega}(\lambda \phi) = 0$, $\lambda \geq 1$ and $\| \phi \|_{L^{p+1}}^{p+1} = \| \phi_{\Omega, \omega} \|_{L^{p+1}}^{p+1}$,
    \begin{equation*}
        S_{\Omega,\omega}(\phi_{\Omega, \omega}) \leq S_{\Omega,\omega}(\lambda \phi) = -\frac{p-1}{p+1} \| \lambda \phi \|_{L^{p+1}}^{p+1} \leq -\frac{p-1}{p+1} \| \phi_{\Omega, \omega} \|_{L^{p+1}}^{p+1} = S_{\Omega,\omega}(\phi_{\Omega, \omega}),
    \end{equation*}
    which implies $ \lambda = 1 $ and $S_{\Omega, \omega}(\phi) = S_{\Omega,\omega}(\phi_{\Omega, \omega})$. Hence, $\phi$ is an action ground state associated with $\omega$ and $\Omega$.
\end{proof}

	\begin{proof}[Proof of \cref{thm:defocusing}]
            With the result (i) in \cref{lem:defocusingOmega0}, for $\delta>0$ sufficiently small, one has the set $ \{\phi_{\Omega,\omega}\}_{0 \leq \Omega < \delta} $ is bounded in $ X $ and so is $  \{|\phi_{\Omega,\omega}|\}_{0 \leq \Omega < \delta} $ (by noting that the inequality $\|\nabla|\phi_{\Omega,\omega}|\|_{L^2}\leq \|\nabla\phi_{\Omega,\omega}\|_{L^2}$).
            Then for any sequence $\Omega^k \rightarrow 0 $, there exists a subsequence (still denoted by $\{\Omega^k\}$) and some $ \phi \in X $ such that $ |\phi_{\Omega^k,\omega}| \rightharpoonup \phi $ in $ X $. By the weak lower-semicontinuity of the norm $\| \cdot \|_X$, \cref{lem:Xembed} and (iii) in \cref{lem:defocusingOmega0}, we have
		\begin{align}
			&\| \phi \|_X \leq \liminf_{k \rightarrow \infty} \| |\phi_{\Omega^k,\omega}| \|_X \leq \liminf_{k \rightarrow \infty} \| \phi_{\Omega^k,\omega} \|_X, \label{2.17}\\
			&\| \phi \|_{L^{p+1}}^{p+1} = \lim_{k \rightarrow \infty} \| |\phi_{\Omega^k,\omega}| \|_{L^{p+1}}^{p+1} = \| \phi_\omega \|_{L^{p+1}}^{p+1}, \quad \| \phi \|_{L^2}^{2} = \lim_{k \rightarrow \infty} \| |\phi_{\Omega^k,\omega}| \|_{L^2}^{2}. \label{eq:4.33}
		\end{align}
		It follows that
		\begin{equation*}
			K_{\omega}(\phi) \leq \liminf_{k \rightarrow \infty} K_{\omega}(\phi_{\Omega^k,\omega}) = 0.
		\end{equation*}
            Since $ \| \phi \|_{L^{p+1}}^{p+1} = \| \phi_\omega \|_{L^{p+1}}^{p+1} $,
            we conclude by \cref{lem:aux defocusing} with $\Omega = 0$ that $K_{\omega}(\phi)=0$ and $ \phi $ is also an action ground state in the non-rotating case associated with $\omega$.
            By \cref{lem:uniqueness} $, |\phi| = \phi_\omega $, which implies, by \cref{eq:4.33} and the arbitrariness of the sequence $\{\Omega^k\}$,
            \begin{equation}\label{L2limit}
			\lim_{\Omega \rightarrow 0} \| \phi_{\Omega,\omega} \|_{L^2} = \| \phi_\omega \|_{L^2}.
		  \end{equation}
            By \cref{L2limit}, (ii) and (iii) in \cref{lem:defocusingOmega0}, we have $ \lim_{\Omega \rightarrow 0} \|\phi_{\Omega,\omega}\|_X = \| \phi_\omega \|_X $. Noting \cref{2.17}, we also have $ \lim_{\Omega \rightarrow 0} \| | \phi_{\Omega,\omega} | \|_X = \| \phi_\omega \|_X $, which together with the weak convergence, yields $ |\phi_{\Omega,\omega}| \rightarrow \phi_{\omega} $ as $\Omega \rightarrow 0$ in $ X $.
	\end{proof}

{In fact, from our numerical results in \cref{sec:Vortices}, we can observe that when $\Omega$ is smaller than a critical value $\Omega^\text{c}$, no vortices appear in the action ground state and $\phi_{\Omega,\omega}$ can be chosen as a nonnegative function that coincides with $\phi_\omega$. }

\subsection{Asymptotics as \texorpdfstring{$ \omega \rightarrow -\lambda_0(\Omega)^- $}{omega-to-minus-lambda0}}
 Next, we consider the limits for  $\omega\in(-\infty,-\lambda_0(\Omega))$ and we begin with the upper limit, i.e., $ \omega \rightarrow -\lambda_0(\Omega)^- $.
    For the linear Schr\"odinger operator with rotation $ R = - \frac12\Delta + V - \Omega L_z $, under Assumption \ref{lem:S-welldef}, it is known to be a self-adjoint operator on $ L^2 $ with purely discrete spectrum\cite{Hajaiej,spectrum1996,spectrum1994}.  Let $ E_0 $ be the eigenspace of $ R $ associated with $ \lambda_0(\Omega) $ (the smallest eigenvalue of $ R $).
	
	\begin{theorem}\label{thm:smallomega_defocusing}
		Let $ \hat \phi_{\Omega,\omega} = \phi_{\Omega,\omega}/\| \phi_{\Omega,\omega} \|_{L^2} $. Then we have
		\begin{equation*}
			\lim_{\omega \rightarrow -\lambda_0(\Omega)^-} d(\hat \phi_{\Omega,\omega}, E_0) = 0,
		\end{equation*}
		where
		\begin{equation*}
			d(\hat \phi_{\Omega,\omega}, E_0) = \inf_{\phi \in E_0} \left\| \hat \phi_{\Omega,\omega}  - \phi  \right\|_X.
		\end{equation*}
	\end{theorem}
	
	\begin{proof}
		Let $ X $ be equipped with an inner product $ \lrang{\cdot}{\cdot}_{R(\kappa)} $. Note that $ K_{\Omega,\omega}(\phi_{\Omega,\omega}) = 0 $ implies that
		\begin{equation}\label{eq:I=0 defocusing}
			\frac{\lrang{R \phi_{\Omega,\omega}}{\phi_{\Omega,\omega}}}{\| \phi_{\Omega,\omega} \|_{L^2}^2} + \omega + \frac{\| \phi_{\Omega,\omega} \|_{L^{p+1}}^{p+1}}{\| \phi_{\Omega,\omega} \|_{L^2}^2} = 0.
		\end{equation}
		By \cref{eq:S_negative}, one has
		\begin{equation*}
			\begin{aligned}
				0>S_{\Omega, \omega}(\phi_{\Omega,\omega})
				&= \lrang{R \phi_{\Omega,\omega}}{\phi_{\Omega,\omega}} + \omega \| \phi_{\Omega,\omega} \|_{L^2}^2 + \frac{2}{p+1}\| \phi_{\Omega,\omega} \|_{L^{p+1}}^{p+1}\\
    &\geq (\omega + \lambda_0(\Omega)) \| \phi_{\Omega,\omega} \|_{L^2}^2.
			\end{aligned}
		\end{equation*}
		Recalling \cref{eq:I=0 defocusing}, we have
		\begin{equation*}
			0>-\frac{p-1}{p+1} \frac{\| \phi_{\Omega,\omega} \|_{L^{p+1}}^{p+1}}{\| \phi_{\Omega,\omega} \|_{L^2}^2} \geq \omega + \lambda_0(\Omega),
		\end{equation*}
        which implies
        \begin{equation}
            \frac{\| \phi_{\Omega,\omega} \|_{L^{p+1}}^{p+1}}{\| \phi_{\Omega,\omega} \|_{L^2}^2} \rightarrow 0 \text{ as } \omega \rightarrow -\lambda_0(\Omega)^-.
        \end{equation}
		By \cref{eq:I=0 defocusing}, $ \phi_{\Omega,\omega} $ is a minimizing sequence for \cref{inf:linear with rotation} as $ \omega \rightarrow -\lambda_0(\Omega)^{-} $. Noting that
		$$ \| \hat \phi_{\Omega,\omega} \|_{R(\kappa)}^2 = \frac{\lrang{R \phi_{\Omega,\omega}}{\phi_{\Omega,\omega}}}{\| \phi_{\Omega,\omega} \|_{L^2}^2} + \kappa \rightarrow \lambda_0(\Omega) + \kappa \text{ as }\ \omega \rightarrow -\lambda_0(\Omega)^-, $$
        we have $ \{ \hat \phi_{\Omega,\omega} \}_{\omega} $ is bounded in $ X $ when $ \omega $ close to $ -\lambda_0(\Omega) $. For any sequence $ \omega^k \rightarrow -\lambda_0(\Omega)^- $, there exists a subsequence $ \omega^{n_k} $ such that for some $ \phi \in X $,
		\begin{equation*}
			 \hat \phi_{\Omega,\omega^{n_k}} \rightharpoonup \phi \text{ in } X, \quad \hat \phi_{\Omega,\omega^{n_k}} \rightarrow \phi \text{ in } L^2, \quad \text{ as } k \rightarrow \infty.
		\end{equation*}
		Then $ \| \phi \|_{L^2} = 1 $ and
		\begin{equation*}
			\lrang{R \phi}{\phi} + \kappa = \| \phi \|_{R(\kappa)}^2 \leq \liminf_{k \rightarrow \infty} \| \hat \phi_{\Omega,\omega^{n_k}} \|_{R(\kappa)}^2 = \kappa + \lambda_0(\Omega),
		\end{equation*}
		which implies that $ \lrang{R \phi}{\phi} = \lambda_0(\Omega) $. It follows immediately that $\| \hat \phi_{\Omega,\omega^{n_k}} \|_{R(\kappa)} \rightarrow \| \phi \|_{R(\kappa)}$ as $\omega \rightarrow -\lambda_0(\Omega)$, which, together with the weak convergence, implies $ \hat \phi_{\Omega,\omega^{n_k}} \rightarrow \phi $ in $ X $. Since $\phi\in E_0$, one obtain $d(\hat \phi_{\Omega,\omega^{n_k}}, E_0)\leq\|\hat \phi_{\Omega,\omega^{n_k}} - \phi \|_X \to 0$ as $k\to\infty$. By the arbitrariness of the sequence $\{\omega^k\}$, the conclusion follows immediately.
	\end{proof}

        Following a similar idea in Ref.~\refcite{Fukuizumi2}, we can precisely show the asymptotics for mass.
	\begin{theorem}(Asymptotic for mass)\label{thm:smallomega_asymptotics}
		When $ \omega \rightarrow -\lambda_0(\Omega)^- $,  the mass of the action ground state is approaching zero at  the rate
		\begin{equation*}
			\| \phi_{\Omega,\omega} \|_{L^2}^2 \sim |\omega + \lambda_0(\Omega)|^\frac{2}{p-1}.
		\end{equation*}
	\end{theorem}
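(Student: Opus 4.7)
The plan is to squeeze $M := \|\phi_{\Omega,\omega}\|_{L^2}^2$ from above and below by $|\omega+\lambda_0(\Omega)|^{2/(p-1)}$ using, respectively, the Nehari identity together with the convergence provided by \cref{thm:smallomega_defocusing}, and a sharp test-function computation on the eigenspace $E_0$ of $R$ associated with $\lambda_0(\Omega)$.

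For the upper bound I would rewrite the Nehari identity $K_{\Omega,\omega}(\phi_{\Omega,\omega})=0$ in normalized form. With $\hat\phi_{\Omega,\omega}=\phi_{\Omega,\omega}/\sqrt{M}$, dividing through by $M$ gives
\begin{equation*}
\lrang{R\hat\phi_{\Omega,\omega}}{\hat\phi_{\Omega,\omega}} + \omega + M^{(p-1)/2}\|\hat\phi_{\Omega,\omega}\|_{L^{p+1}}^{p+1} = 0.
\end{equation*}
Because $\lrang{R\hat\phi_{\Omega,\omega}}{\hat\phi_{\Omega,\omega}}\geq \lambda_0(\Omega)$ by \cref{inf:linear with rotation}, this forces
\begin{equation*}
M^{(p-1)/2}\|\hat\phi_{\Omega,\omega}\|_{L^{p+1}}^{p+1} \leq |\omega+\lambda_0(\Omega)|.
\end{equation*}
By \cref{thm:smallomega_defocusing} and the compact embedding $X\hookrightarrow L^{p+1}$ from \cref{lem:Xembed}, along any sequence $\omega^k\to -\lambda_0(\Omega)^-$ a subsequence of $\{\hat\phi_{\Omega,\omega^k}\}$ converges in $L^{p+1}$ to some $\varphi_\ast\in E_0$ with $\|\varphi_\ast\|_{L^2}=1$. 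Since $R$ has purely discrete spectrum, $E_0$ is finite dimensional and the $L^{p+1}$-norm attains a strictly positive minimum on its unit $L^2$-sphere. A standard contradiction argument then yields a uniform bound $\|\hat\phi_{\Omega,\omega}\|_{L^{p+1}}\geq c>0$ for $\omega$ close enough to $-\lambda_0(\Omega)$, giving $M\lesssim |\omega+\lambda_0(\Omega)|^{2/(p-1)}$.

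For the matching lower bound I would pick any $\varphi_0\in E_0$ with $\|\varphi_0\|_{L^2}=1$ and use $t\varphi_0$ as a test function for the unconstrained minimization in \cref{thm:exist}. A direct computation gives
\begin{equation*}
S_{\Omega,\omega}(t\varphi_0) = t^2(\omega+\lambda_0(\Omega)) + \frac{2t^{p+1}}{p+1}\|\varphi_0\|_{L^{p+1}}^{p+1},
\end{equation*}
and optimizing over $t>0$ (note $\omega+\lambda_0(\Omega)<0$) produces
\begin{equation*}
\min_{t>0}S_{\Omega,\omega}(t\varphi_0) = -\frac{p-1}{p+1}\,\frac{|\omega+\lambda_0(\Omega)|^{(p+1)/(p-1)}}{\|\varphi_0\|_{L^{p+1}}^{2(p+1)/(p-1)}}.
\end{equation*}
Since $\phi_{\Omega,\omega}$ minimizes $S_{\Omega,\omega}$ on $X$ and the Nehari identity yields $S_{\Omega,\omega}(\phi_{\Omega,\omega}) = -\frac{p-1}{p+1}\|\phi_{\Omega,\omega}\|_{L^{p+1}}^{p+1}$, comparing the two gives
\begin{equation*}
\|\phi_{\Omega,\omega}\|_{L^{p+1}}^{p+1} \geq C\,|\omega+\lambda_0(\Omega)|^{(p+1)/(p-1)}.
\end{equation*}
Combining this with the Nehari bound $\|\phi_{\Omega,\omega}\|_{L^{p+1}}^{p+1}\leq |\omega+\lambda_0(\Omega)|\,M$ obtained in the first step yields $M\gtrsim |\omega+\lambda_0(\Omega)|^{2/(p-1)}$.

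The only nontrivial technical point I foresee is guaranteeing the \emph{uniform} positivity of $\|\hat\phi_{\Omega,\omega}\|_{L^{p+1}}$ as $\omega\to -\lambda_0(\Omega)^-$, which underpins the upper estimate on $M$. This is exactly the place where the full strength of \cref{thm:smallomega_defocusing} (compactness of the normalized minimizers up to a subsequence toward $E_0$) together with the finite-dimensionality of $E_0$ is needed; once that uniform bound is in hand, both directions reduce to algebra on the Nehari identity and the test-function computation above.
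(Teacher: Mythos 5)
Your argument is correct, but it follows a genuinely different route from the paper. The paper proves \cref{thm:smallomega_asymptotics} by decomposing $\phi_{\Omega,\omega}=\phi^0_\omega+y_\omega$ orthogonally with respect to $\lrang{\cdot}{\cdot}_{R(1)}$, invoking the spectral gap $\lambda_1\|y_\omega\|_{L^2}^2\leq\lrang{Ry_\omega}{y_\omega}$, and then testing the Euler--Lagrange equation \cref{model0} against $\overline{y_\omega}$ and against $\overline{\phi^0_\omega}$ to obtain $\|y_\omega\|_{R(1)}\lesssim\|\phi^0_\omega\|_{L^2}^p$ and $|\omega|\,\|\phi^0_\omega\|_{L^2}^2\sim\|\phi^0_\omega\|_{L^2}^{p+1}$. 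You instead work entirely at the level of the variational structure: the upper bound $M\lesssim|\omega+\lambda_0(\Omega)|^{2/(p-1)}$ comes from the normalized Nehari identity together with the uniform lower bound on $\|\hat\phi_{\Omega,\omega}\|_{L^{p+1}}$ (which does follow from \cref{thm:smallomega_defocusing}, the embedding $X\hookrightarrow L^{p+1}$, and the equivalence of norms on the finite-dimensional $E_0$), while the lower bound comes from optimizing $S_{\Omega,\omega}(t\varphi_0)$ over $t>0$ for $\varphi_0\in E_0$ and comparing with $S_{\Omega,\omega}(\phi_{\Omega,\omega})=-\frac{p-1}{p+1}\|\phi_{\Omega,\omega}\|_{L^{p+1}}^{p+1}$; both computations check out, including the exponents. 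Your version is shorter and more elementary (no orthogonal decomposition, no min-max principle, no estimates on the nonlinearity $g$), and as a bonus it sandwiches $\|\phi_{\Omega,\omega}\|_{L^{p+1}}^{p+1}$ between two multiples of $|\omega+\lambda_0(\Omega)|^{(p+1)/(p-1)}$, which immediately yields \cref{cor:smallomega_asymp_Sg}. What it does not deliver is the quantitative remainder estimate $\|y_\omega\|_{R(1)}\lesssim\|\phi^0_\omega\|_{L^2}^p$, which the paper's decomposition produces as a by-product and which is the key input for the convergence rate $d(\hat\phi_{\Omega,\omega},E_0)\lesssim|\omega+\lambda_0(\Omega)|$ in \cref{cor:smallomega_asymp_distE0}; so if one wants that corollary as well, the paper's approach (or a supplementary argument) is still needed.
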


	\begin{proof}
		Without loss of generality, we assume that $ \lambda_0(\Omega) = 0 $. Then $ E_0 $ is the eigenspace of $ R $ associated with eigenvalue $ 0 $, which is finite dimensional and thus all the norms on it are equivalent. We shall show that $ \| \phi_{\Omega,\omega} \|_{L^2}^2 \sim |\omega|^\frac{2}{p-1} $.
		
		Assume that $ \phi_{\Omega,\omega} = \phi^0_\omega + y_\omega $, where $ \phi^0_\omega \in E_0 $ and $ \lrang{\phi^0_\omega}{y_\omega}_{R(1)} = 0 $, i.e., $ \phi^0_\omega $ is the orthogonal projection of $ \phi_{\Omega,\omega} $ from $ X $ onto $ E_0 $ with respect to $ \lrang{\cdot}{\cdot}_{R(1)} $. By the min-max principle,
	\begin{equation}\label{eq:min_max}
            \lambda_1 \| y_\omega \|_{L^2}^2 \leq \lrang{R y_\omega}{y_\omega},\  \text{ for some } \lambda_1 > 0.
	\end{equation}
		Since $ \phi^0_\omega \in E_0 $, one has $ R \phi^0_\omega = 0 $ and thus
		\begin{equation*}
			0 = \lrang{\phi^0_\omega}{y_\omega}_{R(1)} = \lrang{R \phi^0_\omega}{y_\omega} + \lrang{\phi^0_\omega}{y_\omega} = \lrang{\phi^0_\omega}{y_\omega},
		\end{equation*}
		which implies $ \phi^0_\omega $ and $ y_\omega $ are also orthogonal in $ L^2 $. It follows that
		\begin{equation*}
			\| \phi_{\Omega,\omega} \|_{R(1)}^2 = \| \phi^0_\omega \|_{R(1)}^2 + \| y_\omega \|_{R(1)}^2, \quad \| \phi_{\Omega,\omega} \|_{L^2}^2 = \| \phi^0_\omega \|_{L^2}^2 + \| y_\omega \|_{L^2}^2.
		\end{equation*}
            By \cref{limit_of_mass}, we have
            \begin{equation*}
                \| y_\omega \|_{L^2} \rightarrow 0, \quad \| \phi_\omega^0 \|_{L^2} \rightarrow 0, \quad \text{ as } \quad \omega \rightarrow 0^-.
		\end{equation*}
		We claim that
            \begin{equation*}
                \| y_\omega \|_{L^2} = o(\| \phi_\omega^0 \|_{L^2}), \quad \text{ as } \quad \omega \rightarrow 0^-.
		\end{equation*}
		In fact, \cref{thm:smallomega_defocusing} implies that
		\begin{equation}\label{eq:distance_E_0}
                \frac{\left\| \phi_{\Omega,\omega} - \phi_\omega^0 \right \|_{R(1)}}{\| \phi_{\Omega,\omega} \|_{L^2}}= \inf_{\phi \in E_0} \left\| \hat \phi_{\Omega,\omega}  - \phi  \right\|_{R(1)} \lesssim \inf_{\phi \in E_0} \left\| \hat \phi_{\Omega,\omega}  - \phi  \right\|_X  \rightarrow 0.
		\end{equation}
		Since
		\begin{equation*}
			\frac{\left\| \phi_{\Omega,\omega} - \phi_\omega^0 \right \|_{R(1)}^2}{\| \phi_{\Omega,\omega} \|_{L^2}^2} = \frac{\| y_\omega \|_{R(1)}^2}{\| \phi^0_\omega \|_{L^2}^2 + \| y_\omega \|_{L^2}^2} \geq \frac{C\| y_\omega \|_{L^2}^2}{\| \phi^0_\omega \|_{L^2}^2 + \| y_\omega \|_{L^2}^2},
		\end{equation*}
		one has $ \| y_\omega \|_{L^2}^2 / \| \phi^0_\omega \|_{L^2}^2 \rightarrow 0 $ as $ \omega \rightarrow 0^- $. Then the claim is proved.
		
		Noting $ \phi_{\Omega,\omega} $ satisfies the stationary RNLS \cref{model0}, we have
		\begin{equation}\label{SNLS}
			R \phi_{\Omega,\omega} - |\omega| \phi_{\Omega,\omega} = -|\phi_{\Omega,\omega}|^{p-1} \phi_{\Omega,\omega} =: -g(\phi_{\Omega,\omega}).
		\end{equation}
		Multiplying both sides of \cref{SNLS} with $ \overline{y_\omega} $ and integrating over $ \mathbb{R}^d $, we obtain
		\begin{align}
				&\| y_\omega \|_{R(1)}^2 -(1+|\omega|) \| y_\omega \|_{L^2}^2
= |\lrang{g(\phi_{\Omega,\omega})}{y_\omega}|\nonumber \\
\leq& |\lrang{g(\phi_{\Omega,\omega}) - g(\phi^0_\omega)}{y_\omega} | + | \lrang{g(\phi^0_\omega)}{y_\omega} | \nonumber\\
\leq& C(\| \phi_{\Omega,\omega} \|_{L^{p+1}}^{p-1} + \| \phi^0_\omega \|_{L^{p+1}}^{p-1} ) \| y_\omega \|_{L^{p+1}}^2 + \| \phi^0_\omega \|_{L^{p+1}}^{p} \| y_\omega \|_{L^{p+1}} \nonumber\\
\lesssim& (\| \phi_{\Omega,\omega} \|_{L^{p+1}}^{p-1} + \| \phi^0_\omega \|_{L^2}^{p-1} ) \| y_\omega \|_{R(1)}^2 + \| \phi^0_\omega \|_{L^2}^{p} \| y_\omega \|_{R(1)}.\label{eq:y_omega}
		\end{align}
        Besides, when $ |\omega| < \varepsilon $ with $\varepsilon$ sufficiently small, one has, by \cref{eq:min_max},
		\begin{equation*}
			\| y_\omega \|_{R(1)}^2 - (1+|\omega|) \| y_\omega \|_{L^2}^2 \geq \| y_\omega \|_{R(1)}^2 - \frac{1+|\omega|}{1 + \lambda_1} \| y_\omega \|_{R(1)}^2 \geq \frac{\lambda_1 - \varepsilon}{1+\lambda_1} \| y_\omega \|_{R(1)}^2.
		\end{equation*}
		Noting that $ \| \phi_{\Omega,\omega} \|_{L^{p+1}} + \| \phi^0_\omega \|_{L^2} \rightarrow 0 $ as $ \omega \rightarrow 0^- $, then it follows from \cref{eq:y_omega} that
		\begin{equation}\label{est:y_omgea}
			\| y_\omega \|_{R(1)} \lesssim \| \phi^0_\omega \|_{L^2}^p.
		\end{equation}
		Hence, $ \| y_\omega \|_{R(1)} = o(\| \phi^0_\omega \|_{L^2}) $ and $ \| \phi_{\Omega,\omega} \|_{R(1)} \sim \| \phi^0_\omega \|_{R(1)} \sim \| \phi^0_\omega \|_{L^2} $. Similarly, by multiplying both sides of \cref{SNLS} with $ \overline{\phi^0_\omega} $ and integrating over $ \mathbb{R}^d $, one can obtain that
		\begin{equation}\label{eq:phi0}
			\begin{aligned}
				|\omega| \| \phi^0_\omega \|_{L^2}^2
				&= \lrang{g(\phi_{\Omega,\omega})}{\phi^0_\omega} = \lrang{g(\phi_{\Omega,\omega}) - g(\phi^0_\omega)}{\phi^0_\omega} + \lrang{g(\phi^0_\omega)}{\phi^0_\omega} \\
				&= \| \phi^0_\omega \|_{L^{p+1}}^{p+1} + \lrang{g(\phi_{\Omega,\omega}) - g(\phi^0_\omega)}{\phi^0_\omega}.
			\end{aligned}
		\end{equation}
		Note that
		\begin{equation*}
			\begin{aligned}
				|\lrang{g(\phi_{\Omega,\omega}) - g(\phi^0_\omega)}{\phi^0_\omega}|
				&\leq C(\| \phi_{\Omega,\omega} \|_{L^{p+1}}^{p-1} + \| \phi^0_\omega \|_{L^{p+1}}^{p-1} ) \| y_\omega \|_{L^{p+1}} \| \phi^0_\omega \|_{L^{p+1}} \\
				&\lesssim (\| \phi_{\Omega,\omega} \|_{R(1)}^{p-1} + \| \phi^0_\omega \|_{L^2}^{p-1} ) \| y_\omega \|_{R(1)} \| \phi^0_\omega \|_{L^{2}}
				\lesssim \| \phi^0_\omega \|_{L^2}^{2p}.
			\end{aligned}
		\end{equation*}
		Then it follows from \cref{eq:phi0} that
		$
			|\omega| \| \phi^0_\omega \|_{L^2}^2 \sim \| \phi^0_\omega \|_{L^{p+1}}^{p+1} \sim \| \phi^0_\omega \|_{L^2}^{p+1},
		$
		which implies $ \| \phi^0_\omega \|_{L^2}^2 \sim |\omega|^\frac{2}{p-1} $ and completes the proof.
	\end{proof}
	
	\begin{corollary}\label{cor:smallomega_asymp_distE0}
		One has	$d(\hat \phi_{\Omega,\omega}, E_0) \lesssim |\omega + \lambda_0(\Omega)|$ as $\omega \rightarrow -\lambda_0(\Omega)^-$.
	\end{corollary}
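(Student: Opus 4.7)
The plan is to reuse the decomposition and estimates already established in the proof of \cref{thm:smallomega_asymptotics}, which gives everything needed almost for free. Recall that we wrote $\phi_{\Omega,\omega} = \phi^0_\omega + y_\omega$ with $\phi^0_\omega$ the orthogonal projection of $\phi_{\Omega,\omega}$ onto $E_0$ with respect to the inner product $\lrang{\cdot}{\cdot}_{R(1)}$, and we showed (assuming for convenience $\lambda_0(\Omega)=0$) the bounds
\[
\|y_\omega\|_{R(1)} \lesssim \|\phi^0_\omega\|_{L^2}^p, \qquad \|\phi^0_\omega\|_{L^2}^2 \sim |\omega|^{\frac{2}{p-1}}, \qquad \|\phi_{\Omega,\omega}\|_{L^2} \sim \|\phi^0_\omega\|_{L^2}.
\]

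To estimate $d(\hat\phi_{\Omega,\omega}, E_0)$, I would simply use $\phi^0_\omega/\|\phi_{\Omega,\omega}\|_{L^2} \in E_0$ as a test element in the infimum defining the distance. Since $\|\cdot\|_X$ and $\|\cdot\|_{R(1)}$ are equivalent on $X$ under \cref{lem:S-welldef}, this yields
\[
d(\hat\phi_{\Omega,\omega}, E_0) \leq \left\| \hat\phi_{\Omega,\omega} - \frac{\phi^0_\omega}{\|\phi_{\Omega,\omega}\|_{L^2}} \right\|_X = \frac{\|y_\omega\|_X}{\|\phi_{\Omega,\omega}\|_{L^2}} \lesssim \frac{\|y_\omega\|_{R(1)}}{\|\phi_{\Omega,\omega}\|_{L^2}}.
\]
Plugging in the two asymptotic estimates above gives
\[
d(\hat\phi_{\Omega,\omega}, E_0) \lesssim \frac{\|\phi^0_\omega\|_{L^2}^p}{\|\phi^0_\omega\|_{L^2}} = \|\phi^0_\omega\|_{L^2}^{p-1} \sim |\omega|,
\]
and recovering the general case by replacing $|\omega|$ with $|\omega+\lambda_0(\Omega)|$ completes the argument.

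There is no real obstacle here, since all the hard analytic work, namely the smallness of $y_\omega$ relative to $\phi^0_\omega$ and the rate $|\omega+\lambda_0(\Omega)|^{1/(p-1)}$ for $\|\phi^0_\omega\|_{L^2}$, was already done inside \cref{thm:smallomega_asymptotics}. The only thing to double-check is that the equivalence of $\|\cdot\|_X$ and $\|\cdot\|_{R(1)}$ applies when we pass from the $R(1)$-orthogonal projection onto $E_0$ to an upper bound in the $X$-norm, but this is guaranteed by \cref{lem:control_rotation} together with $\kappa>\max\{-\lambda_0(\Omega),0\}$ in \cref{eq:inner_R_def}.
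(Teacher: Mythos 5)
Your proposal is correct and follows essentially the same route as the paper: both reduce $d(\hat\phi_{\Omega,\omega},E_0)$ to $\|y_\omega\|_{R(1)}/\|\phi_{\Omega,\omega}\|_{L^2}$ via the norm equivalence and then insert the bounds $\|y_\omega\|_{R(1)}\lesssim\|\phi^0_\omega\|_{L^2}^p$ and $\|\phi_{\Omega,\omega}\|_{L^2}\sim\|\phi^0_\omega\|_{L^2}\sim|\omega+\lambda_0(\Omega)|^{1/(p-1)}$ from \cref{thm:smallomega_asymptotics}. The only cosmetic difference is that you use the projection $\phi^0_\omega/\|\phi_{\Omega,\omega}\|_{L^2}$ as a test element to get an upper bound, while the paper notes that this infimum in the $R(1)$-norm is exactly attained there; either way the conclusion is the same.
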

	
	\begin{proof}
		Recalling \cref{eq:distance_E_0}, we have
		\begin{equation*}
			d(\hat \phi_{\Omega,\omega}, E_0) = \inf_{\phi \in E_0} \| \hat \phi_{\Omega,\omega} - \phi \|_X \sim \inf_{\phi \in E_0} \| \hat \phi_{\Omega,\omega} - \phi \|_{R(1)} = \frac{ \| y_\omega \|_{R(1)}}{\| \phi_{\Omega,\omega} \|_{L^2}},
		\end{equation*}
		where $y_\omega$ is the same as that defined in the proof of \cref{thm:smallomega_asymptotics}.
            By \cref{est:y_omgea,thm:smallomega_asymptotics}, we have
		\begin{equation*}
                \frac{ \| y_\omega \|_{R(1)}}{\| \phi_{\Omega,\omega} \|_{L^2}} \lesssim \frac{ \| \phi^0_\omega \|_{L^2}^p }{\| \phi_{\Omega,\omega} \|_{L^2}} \sim \| \phi^0_\omega \|_{L^2}^{p-1} \sim |\omega + \lambda_0(\Omega)|.
		\end{equation*}
            The proof is thus completed.
	\end{proof}
	
	\begin{corollary}(Asymptotic for action)\label{cor:smallomega_asymp_Sg}
		$ S^g_\Omega(\omega) \sim -| \omega + \lambda_0(\Omega) |^\frac{p+1}{p-1} $ as $ \omega \rightarrow - \lambda_0(\Omega)^- $.
	\end{corollary}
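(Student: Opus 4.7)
The plan is to exploit the Nehari identity together with the mass asymptotic established in \cref{thm:smallomega_asymptotics}. First, since $K_{\Omega,\omega}(\phi_{\Omega,\omega})=0$, combining \cref{eq:K_def} with \cref{eq:Sg_Omega_def} gives the identity
\begin{equation*}
    S^g_\Omega(\omega)=S_{\Omega,\omega}(\phi_{\Omega,\omega})=-\frac{p-1}{p+1}\,\|\phi_{\Omega,\omega}\|_{L^{p+1}}^{p+1},
\end{equation*}
so it is enough to show $\|\phi_{\Omega,\omega}\|_{L^{p+1}}^{p+1}\sim|\omega+\lambda_0(\Omega)|^{\frac{p+1}{p-1}}$ as $\omega\to-\lambda_0(\Omega)^-$.

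Next I would reuse the orthogonal splitting from the proof of \cref{thm:smallomega_asymptotics}: assume without loss of generality that $\lambda_0(\Omega)=0$, write $\phi_{\Omega,\omega}=\phi^0_\omega+y_\omega$ with $\phi^0_\omega\in E_0$ and $y_\omega$ orthogonal to $E_0$ in both the $L^2$ and $\lrang{\cdot}{\cdot}_{R(1)}$ senses. Using $R\phi^0_\omega=0$, the Nehari identity $K_{\Omega,\omega}(\phi_{\Omega,\omega})=0$ rewrites as
\begin{equation*}
    \|\phi_{\Omega,\omega}\|_{L^{p+1}}^{p+1}=-Q_{\Omega,\omega}(\phi_{\Omega,\omega})=|\omega|\,\|\phi_{\Omega,\omega}\|_{L^2}^2-\lrang{Ry_\omega}{y_\omega}.
\end{equation*}

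The min-max principle and the definition of $\lrang{\cdot}{\cdot}_{R(1)}$ yield $\lrang{Ry_\omega}{y_\omega}\lesssim\|y_\omega\|_{R(1)}^2$, and \cref{est:y_omgea} from the proof of \cref{thm:smallomega_asymptotics} gives $\|y_\omega\|_{R(1)}\lesssim\|\phi^0_\omega\|_{L^2}^p$. Together with \cref{thm:smallomega_asymptotics}, which supplies both $\|\phi_{\Omega,\omega}\|_{L^2}^2\sim|\omega|^{\frac{2}{p-1}}$ and $\|\phi^0_\omega\|_{L^2}^2\sim|\omega|^{\frac{2}{p-1}}$, we obtain
\begin{equation*}
    |\omega|\,\|\phi_{\Omega,\omega}\|_{L^2}^2\sim|\omega|^{\frac{p+1}{p-1}},\qquad \lrang{Ry_\omega}{y_\omega}\lesssim|\omega|^{\frac{2p}{p-1}}.
\end{equation*}
Since $\frac{2p}{p-1}>\frac{p+1}{p-1}$ for $p>1$, the error term is of strictly lower order, so the principal part $|\omega|\|\phi_{\Omega,\omega}\|_{L^2}^2$ dictates the asymptotics and $\|\phi_{\Omega,\omega}\|_{L^{p+1}}^{p+1}\sim|\omega|^{\frac{p+1}{p-1}}$. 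Reinstating $\lambda_0(\Omega)$ by replacing $\omega$ with $\omega+\lambda_0(\Omega)$ (or, equivalently, by shifting $R$) then yields the claim.

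The only nontrivial step in this plan is verifying that the remainder $\lrang{Ry_\omega}{y_\omega}$ is genuinely of lower order than the principal term; this is the main obstacle, but it follows directly from the sharp control $\|y_\omega\|_{R(1)}\lesssim\|\phi^0_\omega\|_{L^2}^p$ already proved inside \cref{thm:smallomega_asymptotics}, so no new estimate is required.
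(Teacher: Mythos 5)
Your argument is correct and follows essentially the same route as the paper, which proves this corollary in one line by combining the identity $S^g_\Omega(\omega)=-\frac{p-1}{p+1}\|\phi_{\Omega,\omega}\|_{L^{p+1}}^{p+1}$ with \cref{thm:smallomega_defocusing,thm:smallomega_asymptotics}; your write-up simply makes explicit how the decomposition $\phi_{\Omega,\omega}=\phi^0_\omega+y_\omega$ and the bound \cref{est:y_omgea} from the proof of \cref{thm:smallomega_asymptotics} reduce everything to the principal term $|\omega|\,\|\phi_{\Omega,\omega}\|_{L^2}^2\sim|\omega|^{\frac{p+1}{p-1}}$. All steps check out, including the order comparison $\frac{2p}{p-1}>\frac{p+1}{p-1}$ for $p>1$.
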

	\begin{proof}
	    Recalling that $S^g_\Omega(\omega) = -\frac{p-1}{p+1}\| \phi_{\Omega, \omega}  \|_{L^{p+1}}^{p+1}$, the result is a direct consequence of \cref{thm:smallomega_defocusing,thm:smallomega_asymptotics}.
	\end{proof}

\subsection{Asymptotics as \texorpdfstring{$ \omega \rightarrow -\infty $}{omega infinity}}\label{subsec:omega infinity}
In this section, we consider the limit $\omega \rightarrow -\infty$. Here, we only present some formal computations and the results will be further explored by the numerical experiments in \cref{sec:numer}. 

We define a rescaled function $ \tilde \phi_{\Omega,\omega} $ as
	\begin{equation}\label{rescaled state}
		\phi_{\Omega,\omega}(\sqrt{|\omega|}\,\mathbf{x}) = |\omega|^{\frac{1}{p-1}} \tilde \phi_{\Omega,\omega} (\mathbf{x}).
	\end{equation}
	Then one has
	\begin{align*}
		& \| \nabla \phi_{\Omega,\omega} \|_{L^2}^2 = |\omega|^{\frac{2}{p-1}-1+\frac{d}{2}} \| \nabla \tilde \phi_{\Omega,\omega} \|_{L^2}^2, \quad
		 \omega \| \phi_{\Omega,\omega} \|_{L^2}^2 = -|\omega|^{\frac{2}{p-1}+1+\frac{d}{2}} \| \tilde \phi_{\Omega,\omega} \|_{L^2}^2, \\
		& \| \phi_{\Omega,\omega} \|_{L^2_V}^2 = |\omega|^{\frac{2}{p-1}+\frac{d}{2}} \int_{\mathbb{R}^d} V(|\omega|^\frac{1}{2} \mathbf{x}) |\tilde \phi_{\Omega,\omega}(\mathbf{x})| \mathrm{d} \mathbf{x}, \\
		&\| \phi_{\Omega,\omega} \|_{L^{p+1}}^{p+1} = |\omega|^{\frac{2}{p-1}+1+\frac{d}{2}} \| \tilde \phi_{\Omega,\omega} \|_{L^{p+1}}^{p+1}, \quad L_\Omega(\phi_{\Omega,\omega}) = |\omega|^{\frac{2}{p-1}+\frac{d}{2}} L_\Omega(\tilde \phi_{\Omega,\omega}).
	\end{align*}
	If $ V $ is a harmonic oscillator, which is homogeneous of degree 2, then
	\begin{equation*}
		\| \phi_{\Omega,\omega} \|_{L^2_V}^2 = |\omega|^{2/(p-1)+1+d/2} \| \tilde \phi_{\Omega,\omega} \|_{L^2_V}^2.
	\end{equation*}
	Let $ \varepsilon = |\omega|^{-1} \ll 1 $. Then we have
	\begin{align*}
		&S_{\Omega,\omega}(\phi_{\Omega,\omega})\\
		&= \frac{1}{2} \| \nabla \phi_{\Omega,\omega} \|_{L^2}^2 + \| \phi_{\Omega,\omega} \|_{L^2_V}^2 \omega + \| \phi_{\Omega,\omega} \|_{L^2}^2 + \frac{2}{p+1} \| \phi_{\Omega,\omega} \|_{L^{p+1}}^{p+1} + L_\Omega(\phi_{\Omega,\omega}), \\
		&= |\omega|^{\frac{p+1}{p-1}+\frac{d}{2}} \bigg[ \frac{\varepsilon^2}{2} \| \nabla \tilde \phi_{\Omega,\omega} \|_{L^2}^2 + \| \tilde \phi_{\Omega,\omega} \|_{L^2_V}^2 - \| \tilde \phi_{\Omega,\omega} \|_{L^2}^2 + \frac{2}{p+1} \| \tilde \phi_{\Omega,\omega} \|_{L^{p+1}}^{p+1} \\
&\qquad\qquad\quad\ \ + \varepsilon L_\Omega(\tilde \phi_{\Omega,\omega}) \bigg].
	\end{align*}
	Note that $ \phi_{\Omega,\omega} $ is a minimizer of $ S_{\Omega,\omega}(\phi) $. It follows that $ \tilde \phi_{\Omega,\omega} $ is a minimizer of $ \tilde{S}_{\Omega,\omega} $, where
	\begin{equation*}
		\tilde{S}_{\Omega,\omega}(\phi) = \int_{\mathbb{R}^d} \left( \frac{\varepsilon^2}{2} |\nabla\phi|^2 - |\phi|^2 + V|\phi|^2 + \frac{2}{p+1} |\phi|^{p+1} - \varepsilon \Omega \overline{\phi}L_z \phi \right) \mathrm{d} \mathbf{x}.
	\end{equation*}
	Hence, $ \tilde \phi_{\Omega,\omega} $ satisfies
	\begin{equation*}
		- \frac{\varepsilon^2}{2} \Delta \phi + V \phi - \phi + |\phi|^{p-1}\phi - \varepsilon \Omega L_z \phi = 0.
	\end{equation*}
 Formally, as $ \omega \rightarrow -\infty $, we have
	\begin{equation}\label{TF limit}
		|\tilde \phi_{\Omega,\omega}| \rightarrow \tilde \phi,
	\end{equation}
	where $\tilde \phi$ is a Thomas-Fermi type approximation:
	\begin{equation*}
		\tilde \phi(\mathbf{x}) = \left\{\begin{aligned}
			&[(1 - V(\mathbf{x})]^\frac{1}{p-1}, && V(\mathbf{x}) \leq 1, \\
			&0, && V(\mathbf{x}) > 1.
		\end{aligned}
		\right.
	\end{equation*}

\section{Numerical explorations}\label{sec:numer}
The theoretical studies in the previous two sections show some qualitative and quantitative features of the action ground states but are not exhaustive. In this section, we shall verify and complement the theoretical investigations by numerical explorations. To accurately solve \cref{phi_g-def}, we consider a numerical scheme proposed and analyzed in our recent work\cite{LYZ}. It is a gradient flow method based on the unconstrained formulation \cref{eq:S_negative} discretized by a stabilized semi-implicit Fourier pseudospectral scheme, and it can effectively work for any given $\Omega,\omega$ within the mathematically valid range. We shall adopt this scheme for numerical computations throughout the section, and we refer the readers to Ref.~\refcite{LYZ} for its detailed presentation and implementation.

For simplicity, here we present only two dimensional examples, i.e., $d=2$, $\mathbf{x}=(x_1,x_2)^\top$ in (\ref{model0}),  and
the other parameters are fixed as $p=3,\beta=1$. The potential function is chosen as the harmonic oscillator $V(\mathbf{x})=\frac12|\mathbf{x}|^2$, and so $\Omega$ will be considered within the interval $[0,1)$. The numerical results for more kinds of potentials will be given in Ref.~\refcite{LYWZ}. Under the setup,  we have as a matter of fact (see Lemma~\ref{conj1-lambdaOmg}): $\forall|\Omega|<1$,
\[ \lambda_0(\Omega)=\inf\Big\{\int_{\mathbb{R}^2}\left(\frac12|\nabla \varphi|^2+\frac{1}{2}|\mathbf{x}|^2|\varphi|^2-\Omega \overline{\varphi}L_z\varphi\right)\rmd\mathbf{x}:\, \|\varphi\|_{L^2}=1\Big\}\equiv1=\omega_0. \]
Thus, the parameter $\omega$ can be considered in the interval $(-\infty,-1)$. For each $\omega$ in the non-rotating case, we denote $S_g(\omega)$ and $M_g(\omega)$ for the action value and the mass value at the ground state, respectively. In the rotating case, our algorithm searches for one possible ground state solution $\phi_{\Omega,\omega}$ for the given $\Omega$ and $\omega$, and we denote $S_g(\Omega,\omega)=S_{\Omega,\omega}(\phi_{\Omega,\omega})$ and
$M_g(\Omega,\omega)=\|\phi_{\Omega,\omega}\|_{L^2}^2$ for its action value and mass value, respectively.

\subsection{Behaviour of action and mass in \texorpdfstring{$\omega$}{omega}.}
We first investigate the behaviours of the action and the
mass of the ground state with respect to $\omega$. To do so, we take a fine mesh for $\omega\in(-20,-1)$ and compute the action ground state for each $\omega$. Then the action and the mass of the ground state are plotted as functions of  $\omega$.

\begin{figure}[h!]
\centering
\footnotesize
\includegraphics[width=.39\textwidth,height=.35\textwidth]{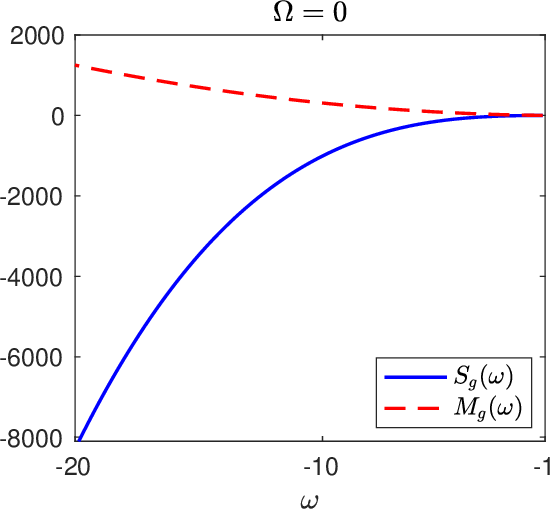} \qquad
\includegraphics[width=.39\textwidth,height=.35\textwidth]{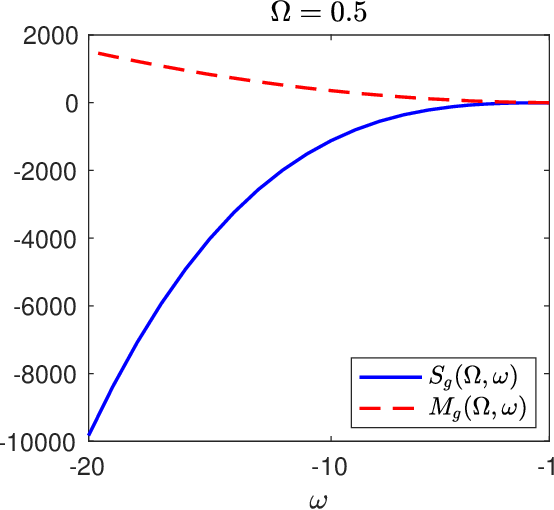}
\vspace{-1ex}
\caption{Change of the action $S_g(\Omega,\omega)$ and the mass $M_g(\Omega,\omega)$  with respect to $\omega$.
Left: $\Omega=0$; Right: $\Omega=0.5$.}
\label{fig:SgMgvs_w_Omg0_5}
\end{figure}

In the non-rotating case ($\Omega=0$), the numerical result shown in \cref{fig:SgMgvs_w_Omg0_5} (left) illustrates the change of the action and the mass with respect to $\omega$. From the figure, one observes that the ground state mass $M_g(\omega)$ is strictly monotonically decreasing with respect to $\omega\in(-\infty,-\omega_0)$ and gives a bijection from $(-\infty,-\omega_0)$ to $(0,\infty)$. This verifies \cref{lem1,lem2,lem3} and consequently,  \cref{thm:energytoaction}. In addition, the ground state action $S_g(\omega)$ is strictly monotonically increasing with respect to $\omega\in(-\infty,-\omega_0)$.

In the rotating case, where we take $\Omega=0.5$, the behaviours  are very similar. It is observed from \cref{fig:SgMgvs_w_Omg0_5} (right) that, for a fixed $\Omega$, $M_g(\Omega,\omega)$ is still strictly monotonically decreasing with respect to $\omega$ and has the following limits:
\[ M_g(\Omega,\omega)\to0 \quad \mbox{as }\omega\to-\lambda_0(\Omega)^-, \quad M_g(\Omega,\omega)\to \infty \quad \mbox{as }\omega\to-\infty. \]
For a fixed $\Omega$, $S_g(\Omega,\omega)$ is strictly monotonically increasing with respect to $\omega$ and
\[ S_g(\Omega,\omega)\to0 \quad \mbox{as }\omega\to-\lambda_0(\Omega)^-, \quad S_g(\Omega,\omega)\to -\infty \quad \mbox{as }\omega\to-\infty. \]

\begin{figure}[!h]
\centering
\footnotesize
\includegraphics[width=.39\textwidth,height=.35\textwidth]{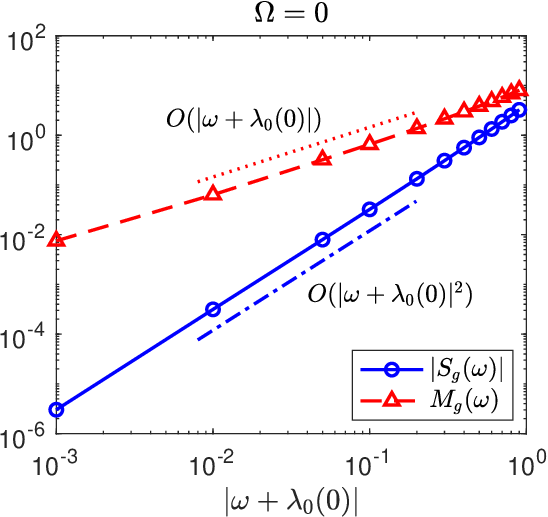} \qquad
\includegraphics[width=.39\textwidth,height=.35\textwidth]{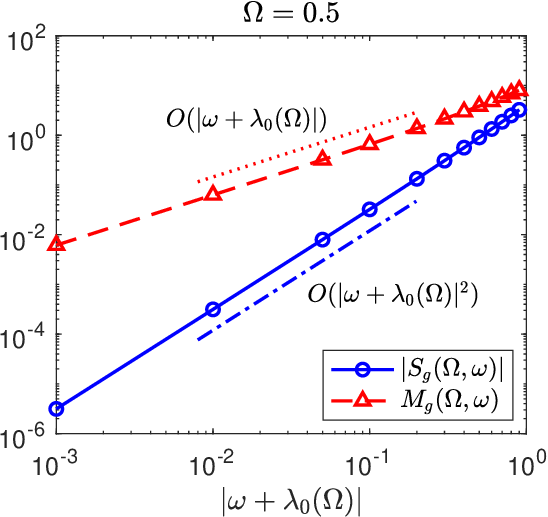}
\vspace{-2ex}
\caption{Asymptotic rates of  $S_g(\Omega,\omega)$ and $M_g(\Omega,\omega)$  when $\omega\to-\lambda_0(\Omega)^-$. Left: $\Omega=0$; Right: $\Omega=0.5$.}
\label{fig:SgMgAsymp_smallw_Omg0_5}
\end{figure}

\begin{figure}[!h]
\centering
\footnotesize
\includegraphics[width=.38\textwidth,height=.35\textwidth]{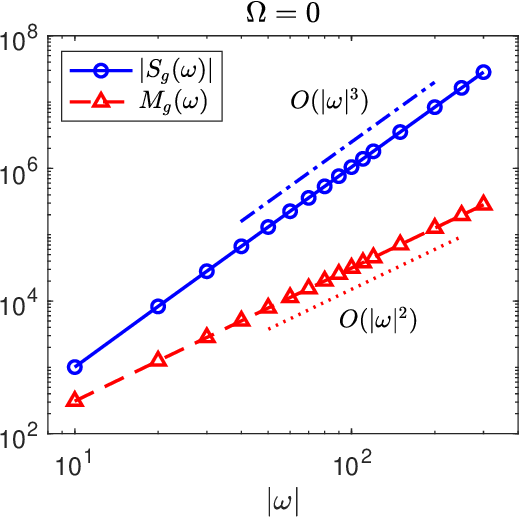} \qquad
\includegraphics[width=.38\textwidth,height=.35\textwidth]{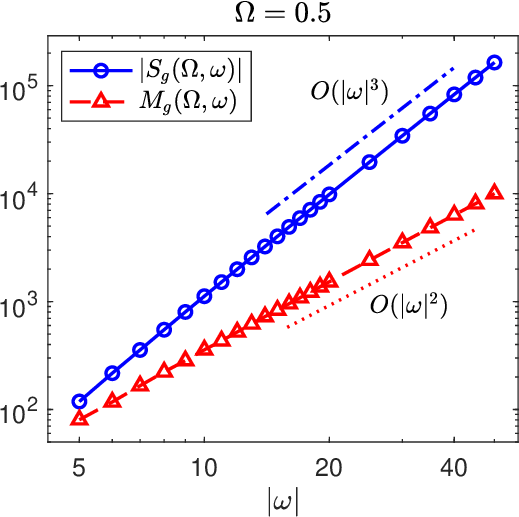}
\vspace{-2ex}
\caption{Asymptotic rates of $S_g(\Omega,\omega)$ and $M_g(\Omega,\omega)$ when $\omega\to -\infty$. Left: $\Omega=0$; Right: $\Omega=0.5$.}
\label{fig:SgMgAsymp_largew_Omg0and0_5}
\end{figure}

\subsection{Asymptotics for limiting \texorpdfstring{$\omega$}{omega}.}
Next, we investigate the asymptotic convergence rates of the ground state action and mass for limiting $\omega$.

Firstly, we consider the limit $\omega\to-\lambda_0(\Omega)^-$.  As shown in \cref{fig:SgMgAsymp_smallw_Omg0_5}, the following asymptotic rates for the action and mass at the ground state are observed in both non-rotating and rotating regimes:
\begin{align*}
 |S_g(\Omega,\omega)| \sim |\omega+\lambda_0(\Omega)|^2, \quad M_g(\Omega,\omega) \sim |\omega+\lambda_0(\Omega)|,\quad \mbox{as }\omega\to-\lambda_0(\Omega)^-.
\end{align*}
Clearly, this verifies \cref{thm:smallomega_asymptotics} and \cref{cor:smallomega_asymp_Sg} in the case of $p=3$.

Secondly, we consider the limit  $\omega\to-\infty$. We can observe from \cref{fig:SgMgAsymp_largew_Omg0and0_5} (for $\Omega=0$ and $\Omega=0.5$) that,
\begin{align*}
 |S_g(\Omega,\omega)| \sim |\omega|^3, \quad M_g(\Omega,\omega) \sim |\omega|^2,\quad \mbox{as }\omega\to-\infty.
\end{align*}
The analysis of the two observed asymptotic rates is ongoing. As derived in \cref{subsec:omega infinity},
here we denote the Thomas-Fermi approximation of the action ground state as
	\[ \phi_{\omega}^{\text{TF}}(\mathbf{x}) = |\omega|^{\frac{1}{2}}\tilde{\phi}\left(|\omega|^{-\frac12}\mathbf{x}\right),\quad \tilde{\phi}(\mathbf{x})=\left[\left(1-V(\mathbf{x})\right)_+\right]^{\frac{1}{2}}. \]
	We find that now
	$ \|\phi_{\omega}^{\text{TF}}\|^2_{L^2} =|\omega|^{2}\|\tilde{\phi}\|^2_{L^2},$
	which gives the same asymptotic rate in mass as the above numerical observation for $\omega\to-\infty$. The action value can not be predicted from $ \phi_{\omega}^{\text{TF}}$ due to its non-smoothness in space.
\cref{fig:action_gs_large_omega_alpha_2.pdf} shows a one-dimensional example to illustrate the validity of the Thomas-Fermi approximation, which confirms  (\ref{TF limit}).

\begin{figure}[!t]
		\centering
		\includegraphics[width=.6\textwidth,height=.35\textwidth]{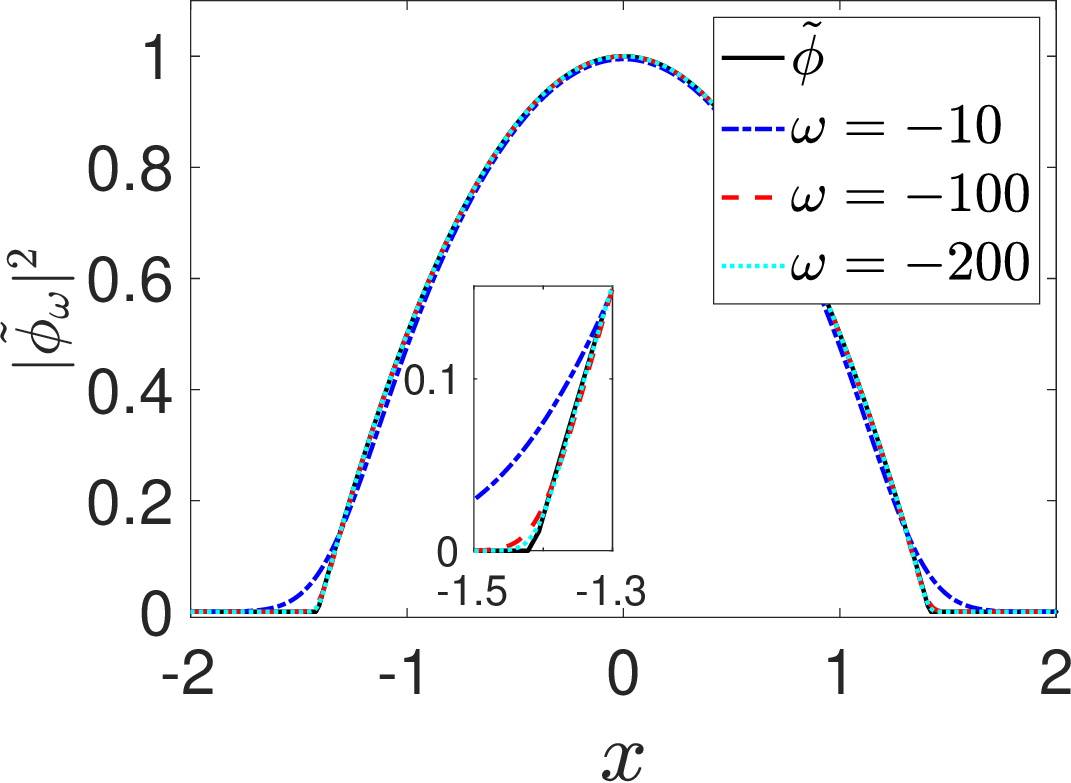}
  \vspace{-2ex}
		\caption{The rescaled action ground state $\tilde\phi_\omega$ (\ref{rescaled state}) for different $\omega$ and the limit $\tilde\phi$.}
  \label{fig:action_gs_large_omega_alpha_2.pdf}
\end{figure}

\begin{figure}[!h]
\centering
\footnotesize
\includegraphics[width=.3\textwidth]{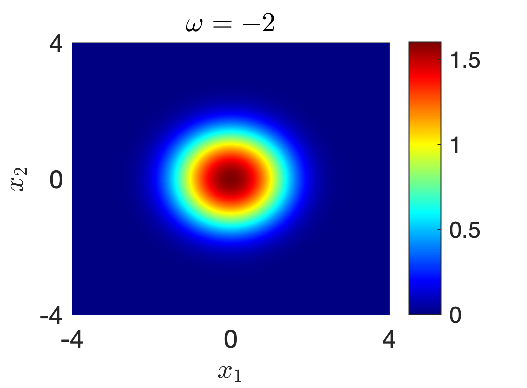}
\includegraphics[width=.3\textwidth]{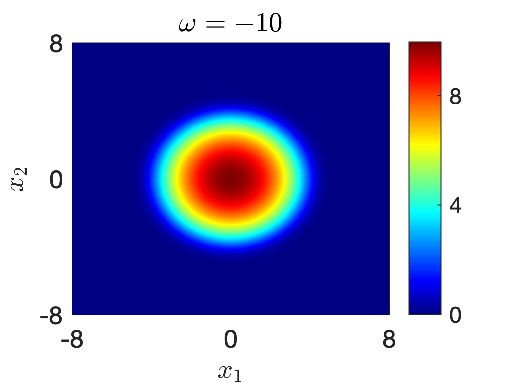}
\includegraphics[width=.3\textwidth]{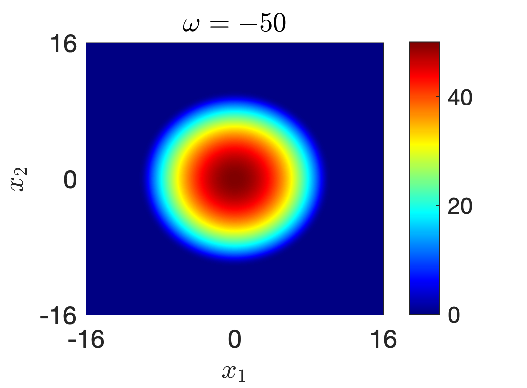} \\[-.5ex]
\includegraphics[width=.3\textwidth]{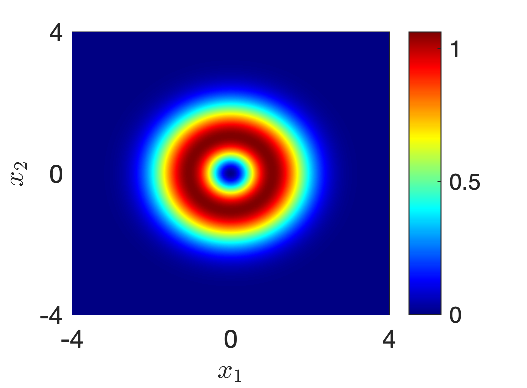}
\includegraphics[width=.3\textwidth]{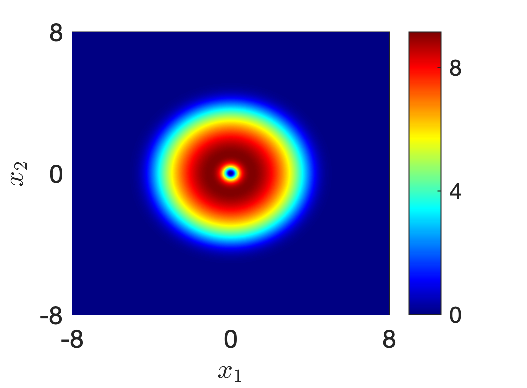}
\includegraphics[width=.3\textwidth]{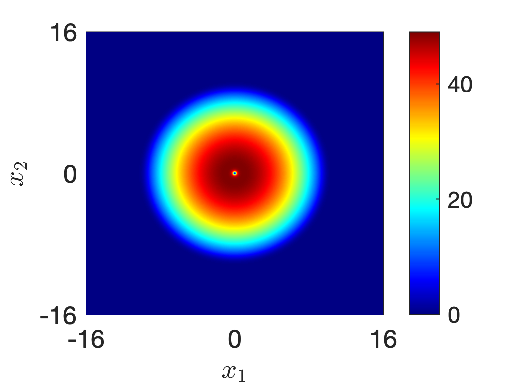} \\
\vspace{-2ex}
\caption{Density profiles of  $|\phi_{\Omega,\omega}(x_1,x_2)|^2$ around the critical $\Omega^c$: $0.766<\Omega^c<0.767$ for $\omega=-2$ (left column), $0.292<\Omega^c<0.293$ for $\omega=-10$ (center column),   $0.089<\Omega^c<0.090$ for $\omega=-50$ (right column), where the lower bounds give non-vortex states (1st row) and the upper bounds give vortex states (2nd row).}
\label{fig:AGSdensity_p3gmx1gmy1_nearOmgc}
\end{figure}
\subsection{Vortices and  critical angular velocity.}\label{sec:Vortices}
The rotational force in (\ref{model}) could induce quantized vortices in the ground state. This distinguishes the defocusing RNLS significantly from the focusing case and is of great interest in various applications. The vortex phenomenon has been well studied in the context of energy ground states\cite{BaoCai}, and now we investigate the counterpart for the action ground states. Besides the usual rotating speed $\Omega$, the chemical potential parameter $\omega$ in fact will also play an important role for the action ground sates in the pattern formation as we shall see below. This will make the vortex phenomena more fruitful. 

Our numerical computations show that for any $\omega<-1$, there exists a critical angular velocity $0<\Omega^c<1$ depending on $\omega$ such that when $0\leq\Omega<\Omega^c$, the density profile of action ground state is unchanged, i.e., $|\phi_{\Omega,\omega}|^2=\phi_{\omega}^2$. It remains  radially symmetric, and $S_g(\Omega,\omega)\equiv S_g(\omega)=S_g(\Omega=0,\omega)$. This is  consistent with \cref{thm:defocusing} and is in fact a stronger result.  When $\Omega>\Omega^c$, a radially symmetric central vortex starts to appear in the action ground state $\phi_{\Omega,\omega}$ and $S_g(\Omega,\omega)<S_g(\omega)$. These findings are demonstrated through
\cref{fig:AGSdensity_p3gmx1gmy1_nearOmgc} and  \cref{fig:SgMgvsOmg_wn5}. The former plots the density of the action ground states around the critical angular velocity $\Omega^c$ for $\omega=-2,-10,-50$, and the latter plots  the change of the ground state action $S_g(\Omega,\omega)$ and  mass $M_g(\Omega,\omega)$ with respect to $\Omega$ under $\omega=-5$.
When $\Omega$ further increases, \cref{fig:AGSdensity_p3gmx1gmy1_wn5} which plots the action ground states for fixed $\omega=-5$ and different $\Omega$, show that a complex vortex lattice pattern is gradually formed with the number of vortices  rapidly  increasing, and the density function becomes no longer radially symmetric.

\begin{figure}[!t]
\centering
\footnotesize
\includegraphics[width=.45\textwidth,height=.35\textwidth]{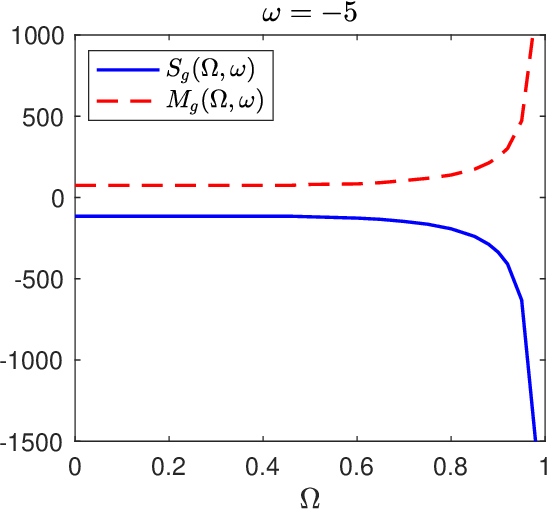}
\vspace{-2ex}
\caption{Change of $S_g(\Omega,\omega)$ and  $M_g(\Omega,\omega)$ with respect to $\Omega$ under   $\omega=-5$.}
\label{fig:SgMgvsOmg_wn5}
\end{figure}
\begin{figure}[!t]
\centering\footnotesize
\includegraphics[width=.3\textwidth]{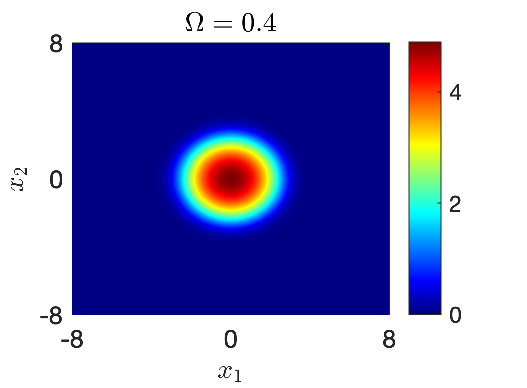}
\includegraphics[width=.3\textwidth]{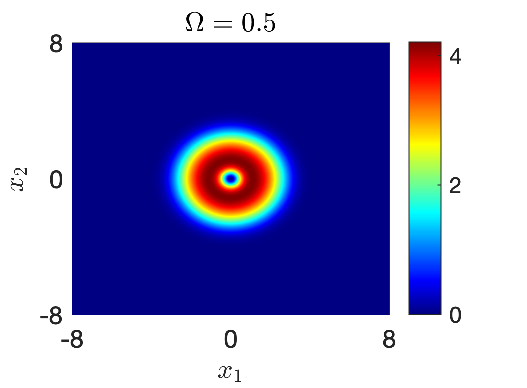}
\includegraphics[width=.3\textwidth]{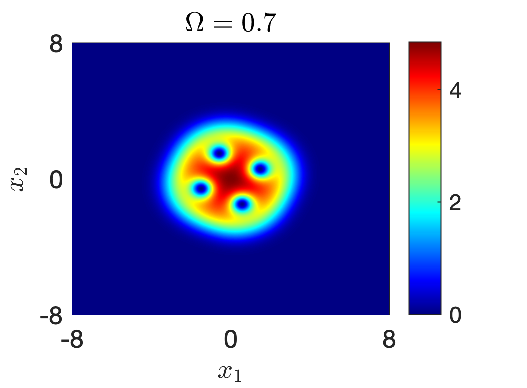} \\
\includegraphics[width=.3\textwidth]{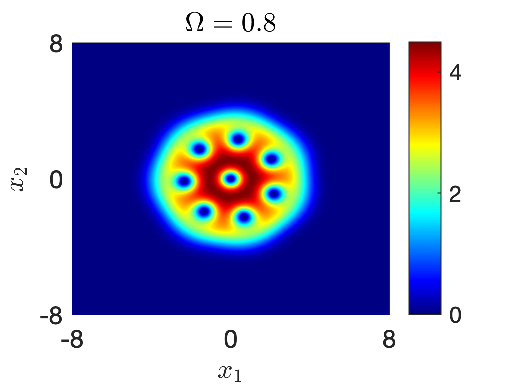}
\includegraphics[width=.3\textwidth]{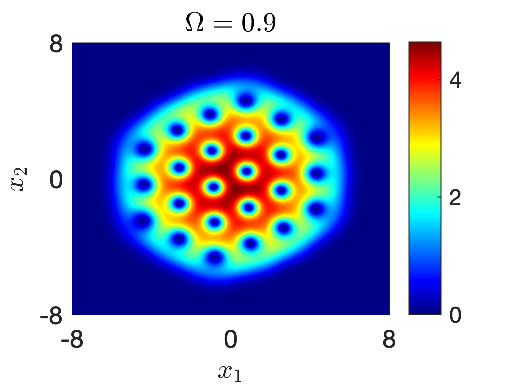}
\includegraphics[width=.3\textwidth]{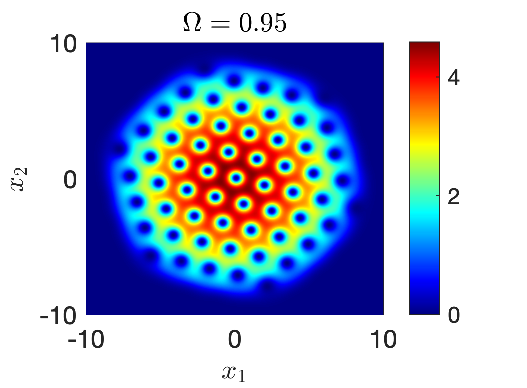} \\
\vspace{-2ex}
\caption{Density profiles of $|\phi_{\Omega,\omega}(x_1,x_2)|^2$ for $\omega=-5$ and different $\Omega$.}
\label{fig:AGSdensity_p3gmx1gmy1_wn5}
\end{figure}
\begin{figure}[!t]
\centering
\footnotesize
\includegraphics[width=.4\textwidth,height=.29\textwidth]{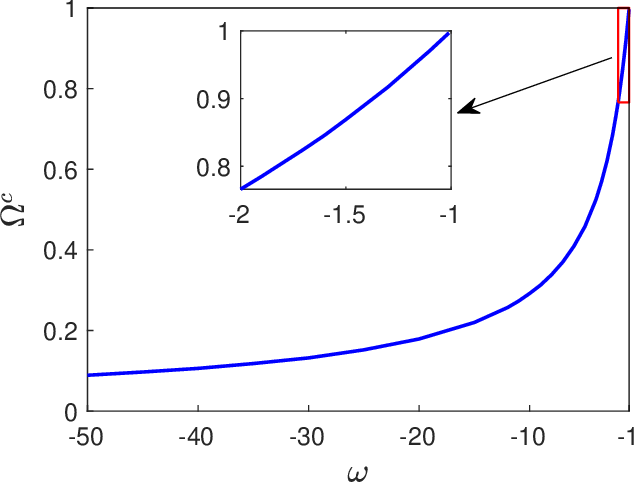}\qquad\quad
\includegraphics[width=.4\textwidth,height=.3\textwidth]{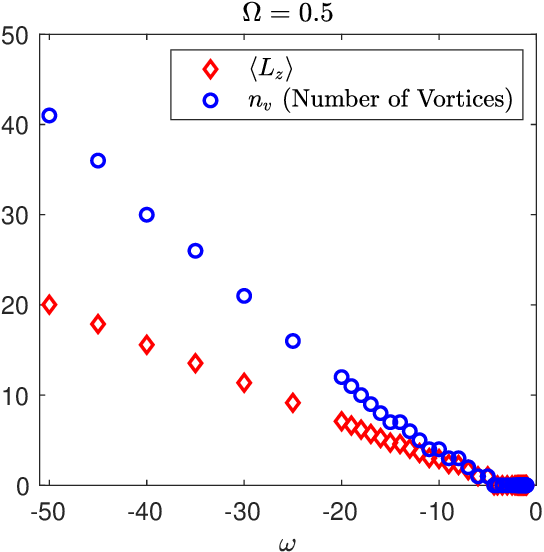}
\vspace{-2ex}
\caption{Critical angular velocity $\Omega^c$ as a function of $\omega$ (left); Number of vortices $n_v$ in $\phi_{\Omega,\omega}$ and the angular momentum expectation $\langle L_z\rangle$ against $\omega$ (right).}
\label{fig:Omgcp3}
\end{figure}

The left plot of \cref{fig:Omgcp3} further addresses  the variation of the critical angular velocity $\Omega^c$ with respect to $\omega$. It can be observed that $\Omega^c$ increases monotonically as $\omega$ increases, and
\[ \lim_{\omega\to-\infty} \Omega^c=0,\qquad \lim_{\omega\to-1^-} \Omega^c = 1. \]
Thus, for $\omega$ in the limit regime $\omega\to-\infty$, a very small $\Omega>0$ can trigger vortices in the action ground states.
To understand the change of vortices in the action ground state with respect to $\omega$, we  fix $\Omega=0.5$ and plot in the right part of \cref{fig:Omgcp3} the number of vortices $n_v$ and the angular momentum expectation, i.e., $\langle L_z\rangle:=\frac{1}{\|\phi_{\Omega,\omega}\|_{L^2}^2}\int_{\mathbb{R}^2}\overline{\phi_{\Omega,\omega}}L_z\phi_{\Omega,\omega}\,\rmd\mathbf{x}$, against several $\omega$. We can see that both $n_v$ and $\langle L_z\rangle$ show an approximately linear increase as $\omega\to -\infty$.

\begin{table}[!t]
\centering
\caption{Quantitative results obtained from the action ground state  for three different $\omega$ and the corresponding energy ground state.}
\label{tab:AGS-then-EGS_Omg0_5_diffw}
\begin{tabular}{ccccccc}
\hline
$\omega$ & $S_g$ & $m$ & $E_g$ & $\mu_g$ & $e^{\text{rel}}_{\omega}$ & $e^{\text{rel}}_S$ \\
\hline
$-30$ &  $-34342.56$ & $3517.33$ &  $71177.33$ & $30.00$ & $3.21\times10^{-10}$ & $7.31\times10^{-14}$ \\
$-40$ &  $-82943.09$ & $6335.55$ & $170478.80$ & $40.00$ & $6.59\times10^{-10}$ & $4.82\times10^{-14}$ \\
$-50$ & $-163950.62$ & $9997.17$ & $335907.93$ & $50.00$ & $2.21\times10^{-10}$ & $2.38\times10^{-14}$ \\
\hline
\end{tabular}
\end{table}
\begin{figure}[!t]
\centering
\footnotesize
\includegraphics[width=.3\textwidth]{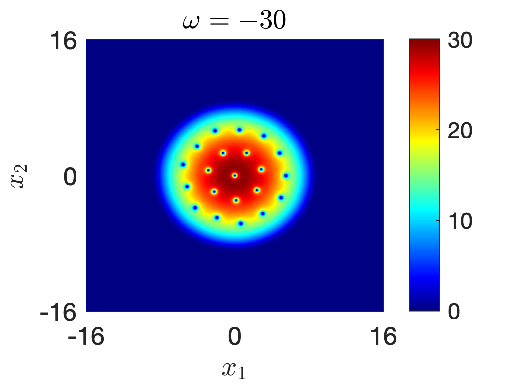}
\includegraphics[width=.3\textwidth]{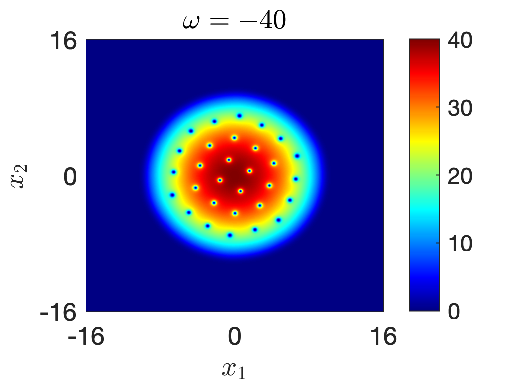}
\includegraphics[width=.3\textwidth]{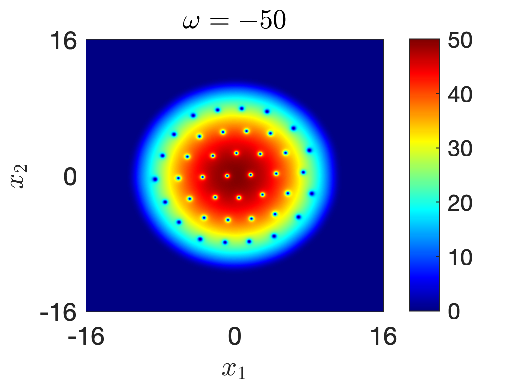} \\
\includegraphics[width=.3\textwidth]{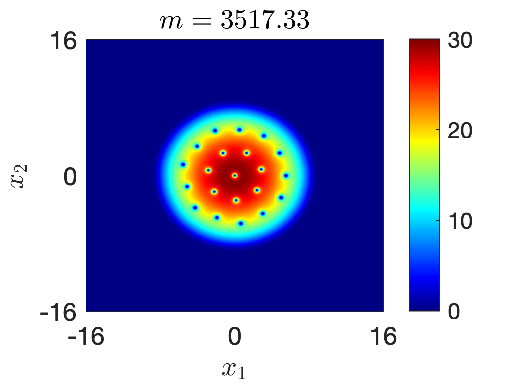}
\includegraphics[width=.3\textwidth]{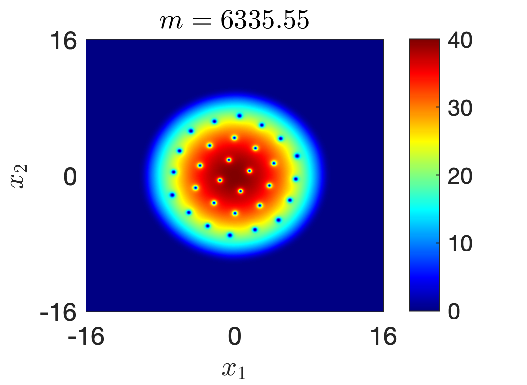}
\includegraphics[width=.3\textwidth]{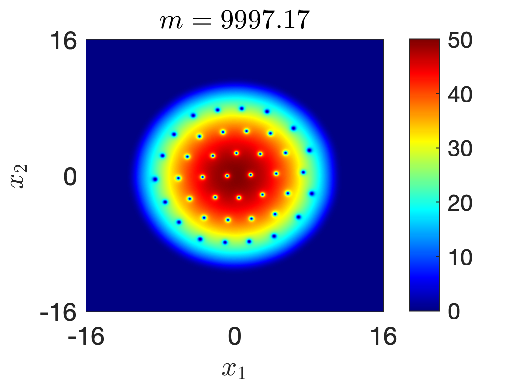} \\
\vspace{-2ex}
\caption{Density profiles of the action ground state $|\phi_{\Omega,\omega}(x_1,x_2)|^2$ (top row) and the corresponding energy ground state $|\phi_m^E(x_1,x_2)|^2$ (bottom row).
}
\label{fig:AGS-EGS-Density_p3gmx1gmy1_wn304050_Omg0_5}
\end{figure}

\subsection{Relation with energy ground state and evidence of non-equivalence}\label{sec:num_non-eq}
Finally, we examine the relation between the action and the energy ground states. Here we fix $\Omega=0.5$.

1) \textit{Conditional equivalence}. For a given $\omega$, we begin by computing the action ground state $\phi_{\Omega,\omega}$ and obtain its action $S_g=S_g(\Omega,\omega)$ and mass $m = M_g(\Omega,\omega)$. We then compute the energy ground state $\phi_m^E$ with the mass $m$ and obtain the corresponding ground state energy $E_g=E(\phi_m^E)$ and chemical potential $\mu_g=\mu(\phi_m^E)$ \eqref{eq:mu_def}. We now compare the obtained results from the action and
the energy ground states. The relative differences $e^{\text{rel}}_{\omega}=|\omega-(-\mu_g)|/|\omega|$ and $e^{\text{rel}}_S=|S_g-(E_g+m\omega)|/|S_g|$ for $\omega=-30,-40,-50$ are listed in \cref{tab:AGS-then-EGS_Omg0_5_diffw}. The density profiles of $\phi_{\Omega,\omega}$ and $\phi_m^E$ are shown in \cref{fig:AGS-EGS-Density_p3gmx1gmy1_wn304050_Omg0_5}.  From \cref{fig:AGS-EGS-Density_p3gmx1gmy1_wn304050_Omg0_5} and \cref{tab:AGS-then-EGS_Omg0_5_diffw}, one can observe that, for these values of $\omega$, the wave function and related physical quantities all form a precise loop from the action ground state to the energy ground state and then to the action ground state, since $\omega\approx-\mu_g$ and $S_g\approx E_g+m(-\mu_g)$ within numerical precision. This matches \cref{thm:actiontoenergy_new}.

2) \textit{Evidence of non-equivalence.} We provide an evidence for the non-equivalence described in \cref{thm:Non-equivalence} by investigating the mass of the action ground state on a finer mesh for $\omega$. As shown in \cref{fig:massAGS_w_Omg0_5} for fixed $\Omega=0.5$, there are a series of critical values of $\omega$, i.e., $\omega_{\mathrm{cr}}^1=-4.3490$, $\omega_{\mathrm{cr}}^2=-6.9025$, $\omega_{\mathrm{cr}}^3=-7.8835$, etc., such that the mass of the action ground state $\|\phi_{\Omega,\omega}\|_{L^2}^2$ has jump discontinuities near these critical values, indicating the existence of multiple action ground states with different mass values at them. More precisely, it is observed that the mass behaves near these critical values as follows:
\begin{align*}
& \|\phi_{\Omega,\omega}\|_{L^2}^2 < 55.4535, \quad \mbox{if }\omega>\omega_{\mathrm{cr}}^1, \quad \|\phi_{\Omega,\omega}\|_{L^2}^2 > 59.6198, \quad \mbox{if }\omega<\omega_{\mathrm{cr}}^1; \\
& \|\phi_{\Omega,\omega}\|_{L^2}^2 < 156.3416,\;\; \mbox{if }\omega>\omega_{\mathrm{cr}}^2, \quad \|\phi_{\Omega,\omega}\|_{L^2}^2 > 161.2327,\;\; \mbox{if }\omega<\omega_{\mathrm{cr}}^2; \\
& \|\phi_{\Omega,\omega}\|_{L^2}^2 < 211.8255,\;\; \mbox{if }\omega>\omega_{\mathrm{cr}}^3, \quad \|\phi_{\Omega,\omega}\|_{L^2}^2 > 216.2082,\;\; \mbox{if }\omega<\omega_{\mathrm{cr}}^3; \quad \cdots,
\end{align*}
and there is no action ground state with a mass value coming from the range
\begin{align}\label{eq:exception-mass}
\big(55.4535,59.6198\big)\bigcup\big(156.3416,161.2327\big)\bigcup\big(211.8255,216.2082\big)\bigcup\cdots.
\end{align}
It is worthwhile to point out that these critical values $\omega_{\mathrm{cr}}^j$ are exactly the transition points where the number of vortices in the action ground state increases. See the density profiles shown in \cref{fig:massAGS_w_Omg0_5}. The first critical value $\omega_{\mathrm{cr}}^1$  corresponds exactly to the critical rotational speed $\Omega^c$ discussed in \cref{sec:Vortices}.

\begin{figure}[!t]
\centering
\footnotesize
\includegraphics[width=.75\textwidth,height=.47\textwidth]{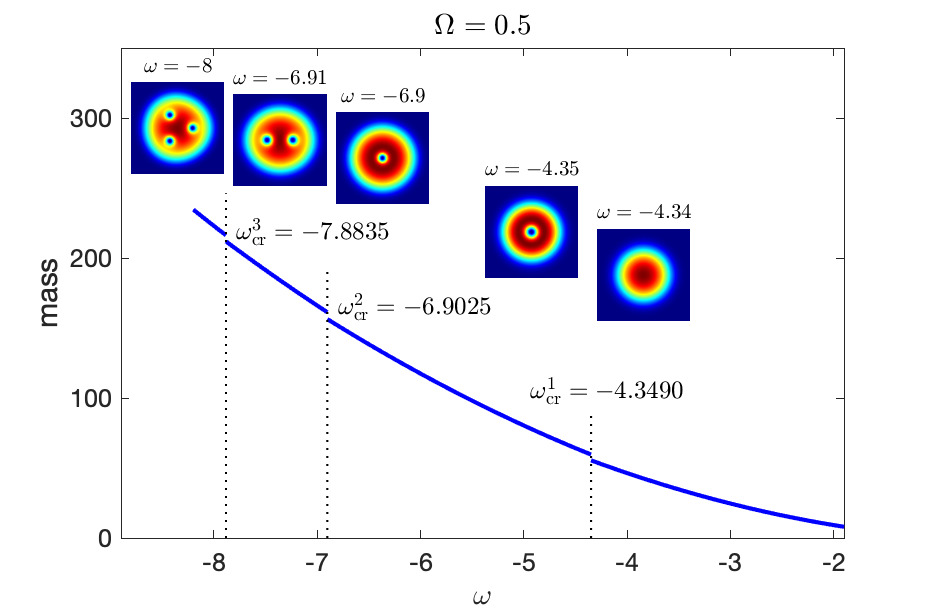}
\vspace{-2ex}
\caption{Change of the mass $\|\phi_{\Omega,\omega}\|_{L^2}^2$ with respect to $\omega$ and density profiles $|\phi_{\Omega,\omega}(x_1,x_2)|^2$ at $\omega=-4.34,-4.35,-6.9,-6.91,-8$ when $\Omega=0.5$.}
\label{fig:massAGS_w_Omg0_5}
\end{figure}
\begin{figure}[!t]
\centering
\footnotesize
\includegraphics[width=.98\textwidth]{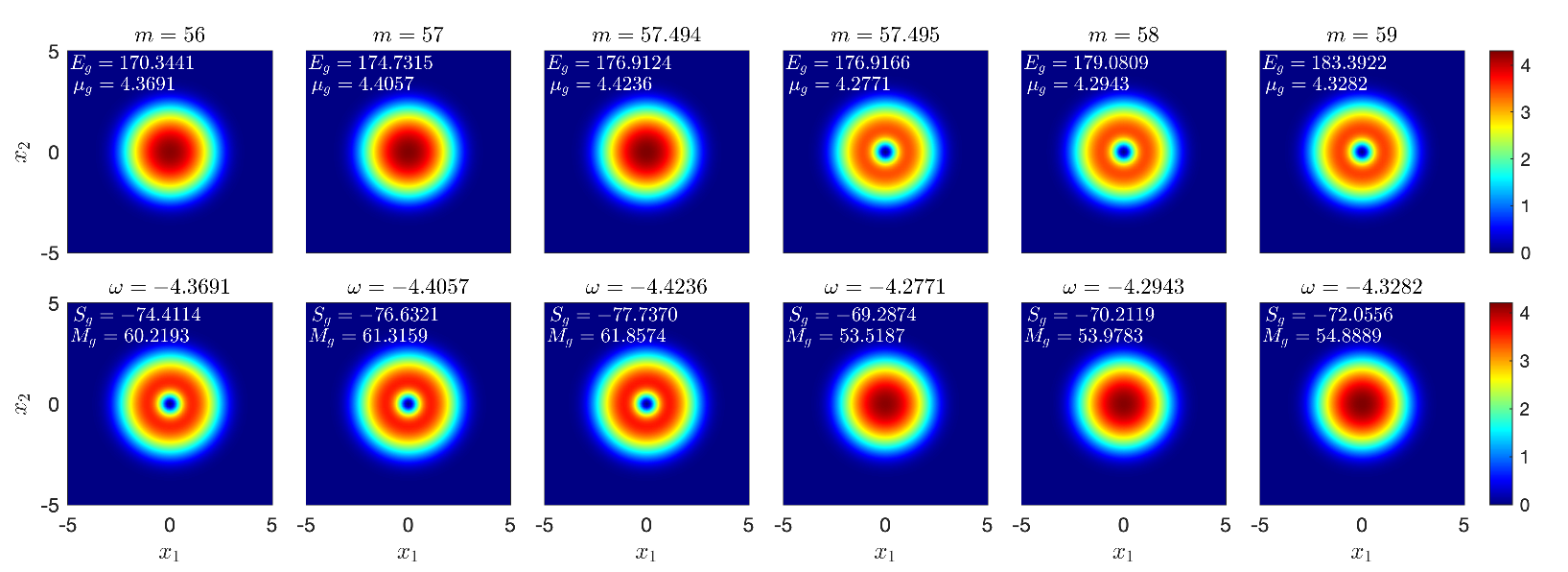}
\vspace{-2ex}
\caption{Density profiles of the energy ground state $|\phi_m^E(x_1,x_2)|^2$ (1st row) and the corresponding action ground state $|\phi_{\Omega,\omega}(x_1,x_2)|^2$ (2nd row) with $\omega=-\mu_g$.}
\label{fig:EGS2AGS_exception_mass}
\end{figure}

As predicted by \cref{thm:Non-equivalence}, any energy ground state $\phi_m^E$ within the given mass range  \eqref{eq:exception-mass} must not be an action ground state. To verify this, \cref{fig:EGS2AGS_exception_mass} presents the numerical results for energy ground states $\phi_m^E$ with several $m\in(55.4535,59.6198)$ and the corresponding action ground states $\phi_{\Omega,\omega}$ with $\omega=-\mu(\phi_m^E)$. The results show that for these values of $m$ (corresponding to $\omega=\omega_{\mathrm{cr}}^1$), the wave function and related physical quantities cannot loop from the energy ground state to the action ground state and then back to the original energy ground state. In addition, we observe that there exists a critical value of mass $m_{\mathrm{cr}}^1\in(57.494,57.495)$ such that, for given $m\in(55.4535,59.6198)$, the chemical potential $\omega=-\mu(\phi_m^E)$ and the mass of corresponding action ground state  satisfy:
\begin{enumerate}[label=(\roman*)]
\item when $m\in(55.4535,m_{\mathrm{cr}}^1)$, $\omega=-\mu(\phi_m^E)<\omega_{\mathrm{cr}}^1$ and $\|\phi_{\Omega,\omega}\|_{L^2}^2>59.6198$;
\item when $m\in(m_{\mathrm{cr}}^1,59.6198)$, $\omega=-\mu(\phi_m^E)>\omega_{\mathrm{cr}}^1$ and $\|\phi_{\Omega,\omega}\|_{L^2}^2<55.4535$.
\end{enumerate}
In  \cref{fig:EGS2AGS_exception_mass}, the energy ground states displayed in the top row, while bearing a resemblance to the action ground states depicted in the bottom row, are in fact not  action ground states. Nevertheless, these energy ground states are still nontrivial solutions to the stationary RNLS \eqref{model0} with $\omega=-\mu(\phi_m^E)$, and their  action values are higher than  the ground states, which indicates that  they are action excited states. For example, for $m=57.494$, from \cref{fig:EGS2AGS_exception_mass}, $S_{\Omega,\omega}(\phi_m^E)=E_g-m\mu_g= 176.9124 - 57.494\times 4.4236 = -77.4181 > S_g = -77.7370$.

3) \textit{Tracing solution branches.}
Finally, we use a continuation approach (with respect to $\omega$) to numerically trace the branches of solutions of the stationary RNLS \eqref{model0}. Four solution branches presented in \cref{fig:massAGS_w_Omg0_5} have been continued with their mass shown in \cref{fig:Mass_SolutionBranches_Omg0_5}. It can be observed that, as $\omega$ changes, these solutions continue to exist when they are no longer the action ground state. More precisely, each solution branch exists for all $\omega$ below a certain threshold which is getting smaller and smaller for solutions with an increasing number of vortices. For example, the branch of the solution without vortex exists for all $\omega<-1$ (see the blue curve in \cref{fig:Mass_SolutionBranches_Omg0_5}), while the branch of the solution with a central vortex exists only for all $\omega<-1.5$ (see the red curve in \cref{fig:Mass_SolutionBranches_Omg0_5}). These numerical observations in fact further demonstrate the multi-solution structure of the stationary RNLS \eqref{model0} for relative smaller $\omega$. In particular, the threshold of $\omega$ that makes the occurrence of the central vortex state solution estimates the critical value for the existence of multiple nontrivial solutions. As $\omega$ continues to decrease, more and more multi-vortex solutions emerge and the action ground state continues to be given by the solutions with more vortices.

\begin{figure}[!t]
\centering
\footnotesize
\includegraphics[width=.75\textwidth,height=.45\textwidth]{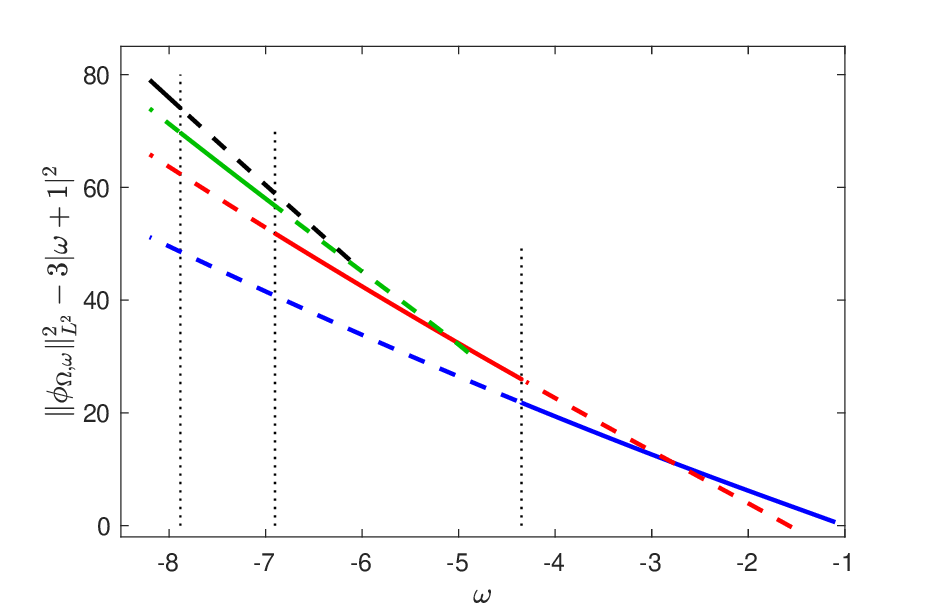}
\vspace{-2ex}
\caption{Change of the mass (shown in a relative value $\|\phi_{\Omega,\omega}\|_{L^2}^2-3|\omega+1|^2$) of the solution branches presented in \cref{fig:massAGS_w_Omg0_5} with respect to $\omega$ when $\Omega=0.5$. Curves colored blue, red, green, and black correspond to solution branches with 0, 1, 2, and 3 vortices, respectively. The solid part of each curve indicates that the corresponding solutions are the action ground state, and the dashed part indicates that they are no longer the action ground state.}
\label{fig:Mass_SolutionBranches_Omg0_5}
\end{figure}

\section{Conclusion}\label{sec:con}
In this study, we examined the action ground states of the rotating nonlinear Schr\"odinger equation in the defocusing nonlinear interaction case. The investigations encompass both theoretical and numerical aspects. The theoretical studies yielded qualitative and quantitative features of the action ground states. A specific analysis was conducted to understand the relationship between the action ground states and the more commonly studied energy ground states. In the absence of a rotating force, i.e., the non-rotating case, the complete equivalence between the two types of ground states was established. For the rotating case, the conditional equivalence was proven, and a characterization for the possible occurrence of non-equivalence was derived. Alongside the analysis, the asymptotic behaviors of the mass and action at the ground state solution were obtained in certain physically relevant parameter regimes.
The numerical investigation conducted experiments to validate and complement the theoretical studies. Particular emphasis was given to investigating vortex phenomena in action ground states. Notably, the number of quantized vortices in the ground state solution was found to be significantly influenced by the given chemical potential parameter. Building on this finding and the theoretical non-equivalence characterization, we designed numerical experiments to observe the non-equivalence between the action ground state and the energy ground state.

\appendix

\section{A fact of ground-state energy under symmetric harmonic oscillator}

\begin{lemma}\label{conj1-lambdaOmg}
If $V(\mathbf{x})=\frac12\gamma_r^2|\mathbf{x}|^2$ ($d=2$), and $|\Omega|<\gamma_r$, then $\lambda_0(\Omega)\equiv\lambda_0(0)=\gamma_r$ with the corresponding eigenfunction $\varphi_0=\sqrt{\gamma_r/\pi}\mathrm{e}^{-\gamma_r|\mathbf{x}|^2/2}$ up to a constant phase factor. If $V(\mathbf{x})=\frac12\gamma_r^2(x_1^2+x_2^2)+\frac12\gamma_3x_3^2$ ($d=3$), $\gamma_3>0$, and $|\Omega|<\gamma_r$, then $\lambda_0(\Omega)\equiv\lambda_0(0)=\gamma_r+\frac12\gamma_3$ with the corresponding eigenfunction $\varphi_0=\frac{\gamma_r^{1/2}\gamma_3^{1/4}}{\pi^{3/4}}\mathrm{e}^{-\gamma_r(x_1^2+x_2^2)/2-\gamma_3x_3^2/2}$ up to a constant phase factor.
\end{lemma}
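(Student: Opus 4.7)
The plan is to exploit the axial symmetry of $V$, which forces $[R_0, L_z] = 0$ for $R_0 := -\tfrac12\Delta + V$, so that $R = R_0 - \Omega L_z$ can be diagonalized simultaneously with $L_z$ and its smallest eigenvalue read off algebraically.

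In $d = 2$, I would pass to polar coordinates and expand any $\varphi \in X$ as the angular Fourier series $\varphi(r,\theta) = \sum_{m\in\mathbb{Z}} f_m(r)\mathrm{e}^{\mathrm{i}m\theta}$. Since $L_z = -\mathrm{i}\partial_\theta$ acts as multiplication by $m$ on the $m$-th sector, $R$ decouples across $m$ into the 2D isotropic-oscillator radial operator with an extra constant shift $-\Omega m$. The classical Laguerre--Gaussian calculation gives radial eigenvalues $\gamma_r(2n_r + |m|+1)$ for $n_r = 0,1,2,\ldots$, so the full spectrum of $R$ is
\begin{equation*}
    \operatorname{spec}(R) = \bigl\{\gamma_r(2n_r+|m|+1) - \Omega m:\ n_r \in \{0,1,2,\ldots\},\ m \in \mathbb{Z}\bigr\}.
\end{equation*}
For $|\Omega|<\gamma_r$ the estimate $\gamma_r|m|-\Omega m \ge (\gamma_r-|\Omega|)|m|\ge 0$ is saturated only at $m = 0$, while $n_r = 0$ is clearly optimal, so $\lambda_0(\Omega) = \gamma_r$ is attained uniquely at $(n_r,m) = (0,0)$; the associated eigenfunction is the radial Gaussian $\mathrm{e}^{-\gamma_r r^2/2}$, which $L^2$-normalizes to the prefactor $\sqrt{\gamma_r/\pi}$.

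For $d=3$, the operator $L_z$ involves only $(x_1,x_2)$, so under the tensor decomposition $L^2(\mathbb{R}^3) = L^2(\mathbb{R}^2)\otimes L^2(\mathbb{R})$ one has $R = R^{(2)}\otimes I + I\otimes H_3$, where $R^{(2)}$ is the 2D operator just handled and $H_3$ is the 1D harmonic oscillator in $x_3$. The ground-state eigenvalue is then the sum of the two individual minima and the ground state is the tensor product; combining the 2D result with the explicit 1D harmonic-oscillator ground state yields $\lambda_0(\Omega) = \gamma_r + \tfrac12\gamma_3$ and the stated product Gaussian after $L^2$-normalization.

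The main technical point is confirming that this algebraic minimization really captures the variational infimum in \cref{inf:linear with rotation}, i.e., that $R$ is self-adjoint on $L^2$ with purely discrete spectrum whose eigenfunctions form a basis of $X$. This is already granted by \cref{lem:S-welldef} and the compact embedding \cref{lem:Xembed} (which apply because $|\Omega|<\Omega_{\max} = \gamma_r$ in the present setting), forcing compactness of the resolvent of $R$; the explicit Laguerre--Gaussian spectrum is then a standard computation, after which the minimization in the second paragraph produces the claim.
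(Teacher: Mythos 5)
Your proposal is correct and follows essentially the same route as the paper: both reduce the $2$D case to the Laguerre--Gaussian spectrum $\lambda_{mn}(\Omega)=(2n+|m|+1)\gamma_r-m\Omega$ of the rotating isotropic oscillator in polar coordinates and then minimize over $(n,m)$ using $|\Omega|<\gamma_r$, and both handle $d=3$ by separating off the $1$D harmonic oscillator in $x_3$ (the paper via cylindrical coordinates, you via the equivalent tensor-product decomposition). Your additional remarks on simultaneous diagonalizability with $L_z$ and on discreteness of the spectrum are consistent with the paper, which simply cites the known spectral computation.
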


\begin{proof}
Direct computations show that, in 2D polar coordinate $(r,\theta)$, all the eigenvalues and corresponding eigenfunctions of $R=-\frac{1}{2}\Delta+\frac{1}{2}\gamma_r^2|\mathbf{x}|^2-\Omega L_z$ are given as\cite{BaoLiShen09}: for $n\in\mathbb{N},\, m\in\mathbb{Z},$
\[ \lambda_{mn}(\Omega)=(2n+|m|+1)\gamma_r-m\Omega,\quad \varphi_{mn}(r,\theta)=C_{mn}r^{|m|}\mathcal{L}_n^{(|m|)}(\gamma_r r^2)\mathrm{e}^{-\gamma_r r^2/2}\mathrm{e}^{im\theta},\]
where $\mathcal{L}_n^{(|m|)}$ are generalized Laguerre polynomials and $C_{mn}>0$ are normalization constants such that $\|\varphi_{mn}\|_{L^2}=1$. Since $|\Omega|<\gamma_r$, the smallest eigenvalue is $\lambda_0(\Omega)=\lambda_{00}(\Omega)=\gamma_r$, which is single-fold, with the corresponding eigenfunction $\varphi_{00}(r,\theta)=\sqrt{\gamma_r/\pi}\mathrm{e}^{-\gamma_r r^2/2}$. The proof in 3D can be done by using the cylindrical coordinate $(r,\theta,x_3)$ and noting that $-\frac12\partial_{x_3}^2+\frac12\gamma_3^2x_3^2$ has eigenvalues $\left(k+\frac12\right)\gamma_3$ ($k\in\mathbb{N}$) with corresponding eigenfunctions given by scaled Hermite functions\cite{BaoLiShen09}.
\end{proof}

Note that $\lambda_0(\Omega)$ can be viewed as the ground-state energy of a non-interacting rotating BEC\cite{BaoCai}. Lemma~\ref{conj1-lambdaOmg} says that under a 2D radially symmetric or 3D cylindrical symmetric harmonic oscillator potential, the ground state is independent of the rotational speed $\Omega$.

\section*{Acknowledgment}
C. Wang is supported by the Ministry of Education of Singapore under its AcRF Tier 2 funding MOE-T2EP20122-0002 (A-8000962-00-00). W. Liu is supported by National Natural Science Foundation of China 12101252 and the Guangdong Basic and Applied Basic Research Foundation 2022A1515010351 through South China Normal University. X. Zhao is supported by the National Natural Science Foundation of China 12271413. Part of this work was done when the authors were visiting Institute for Mathematical Sciences, National University of Singapore in spring of 2023.


\end{document}